\documentclass[11 pt]{amsart}
\usepackage{amscd,amsfonts,amssymb,amsmath}
\usepackage{hyperref}
\usepackage{epsfig}
\usepackage{mathtools}
\usepackage{tabu}
\usepackage{tikz-cd}
\newtheorem{theorem}{Theorem}[section]
\newtheorem{corollary}[theorem]{Corollary}
\newtheorem{lemma}[theorem]{Lemma}
\newtheorem{proposition}[theorem]{Proposition}
\theoremstyle{definition}

\newtheorem{problem}[theorem]{Problem}

\newtheorem{example}[theorem]{Example}
\newtheorem{remark}[theorem]{Remark}
\numberwithin{equation}{subsection}

\usepackage[all,cmtip]{xy}

\usepackage{graphicx} 

\newcommand{\Alex}{\operatorname{Alex}}
\newcommand{\Maps}{\operatorname{Map}}
\newcommand{\Aut}{\operatorname{Aut}}

\newcommand{\Conj}{\operatorname{Conj}}
\newcommand{\Core}{\operatorname{Core}}

\newcommand{\Ker}{\operatorname{Ker}}

\newcommand{\Adj}{\operatorname{Adj}}

\newcommand{\Inn}{\operatorname{Inn}}

\newcommand{\T}{\operatorname{T}}
\newcommand{\Ho}{\operatorname{H}}

\newcommand{\B}{\operatorname{B}}
\newcommand{\R}{\operatorname{R}}
\newcommand{\Hom}{\operatorname{Hom}}

\newcommand{\Z}{\operatorname{Z}}
\newcommand{\img}{{\rm Im}}
\newcommand{\Map}{\mathrm{Map}}
\newcommand{\id}{\mathrm{id}}

\setlength\oddsidemargin{.02mm}
\setlength\evensidemargin{.02mm}
\setlength\textheight{21.5cm}
\setlength\textwidth{16.5cm}
\setlength\parindent{0pt}
\begin{document}

\title{Quandle cohomology, extensions and automorphisms}

\author{Valeriy Bardakov}
\address{Sobolev Institute of Mathematics and Novosibirsk State University, Novosibirsk 630090, Russia.
Novosibirsk State Agrarian University, Dobrolyubova street, 160, Novosibirsk, 630039, Russia.}
\email{bardakov@math.nsc.ru}
\author{Mahender Singh}
\address{Department of Mathematical Sciences, Indian Institute of Science Education and Research (IISER) Mohali, Sector 81,  S. A. S. Nagar, P. O. Manauli, Punjab 140306, India.}
\email{mahender@iisermohali.ac.in}

\subjclass[2010]{Primary 57M27, 18G60; Secondary 18G50, 20B25}
\keywords{Automorphism, dynamical cocycle, factor set, group extension, group cohomology, quandle module, quandle cohomology, quandle extension}

\begin{abstract}
A quandle is an algebraic system with a binary operation satisfying three axioms modelled on the three Reidemeister moves of planar diagrams of links in the 3-space. The paper establishes new relationship between cohomology, extensions  and automorphisms of quandles. We derive a four term exact sequence relating quandle 1-cocycles, second quandle cohomology and certain group of automorphisms of an abelian extension of quandles. A non-abelian counterpart of this sequence involving dynamical cohomology classes is also established, and some applications to lifting of quandle automorphisms are given.  Viewing the construction of the conjugation, the core and the generalised Alexander quandle  of a group as an adjoint functor of some appropriate functor from the category of quandles to the category of groups, we prove that these functors map extensions of groups to extensions of quandles. Finally, we construct some natural group homomorphisms from the second cohomology of a group to the second cohomology of its core and conjugation quandles.
\end{abstract}
\maketitle

\section{Introduction}\label{introduction}

A quandle is an algebraic system with a binary operation satisfying three axioms that are algebraic formulations of the three Reidemeister moves of planar diagrams of links in the 3-space. These objects have shown appearance in a wide spectrum of mathematics including knot theory \cite{Joyce1982, Matveev}, group theory, set-theoretic solutions to the quantum Yang-Baxter equation and Yetter-Drinfeld Modules \cite{Eisermann2005}, Riemannian symmetric spaces \cite{Loos1} and Hopf algebras \cite{Andruskiewitsch2003}, to name a few. Though already studied under different guises in the literature, study of these objects gained momentum after the fundamental works of Matveev \cite{Matveev} and Joyce \cite{Joyce1982}, who showed that link quandles are complete invariants of non-split links up to orientation of the ambient space. Although link quandles are strong invariants, it is difficult to check whether two quandles are isomorphic. This motivated search for newer properties and invariants of quandles themselves. The curious reader may refer to the articles \cite{Carter2012, Kamada, FennRourke1992, Nelson2011, Nosaka2017} for more on the historical development of the subject.
\par

Recent years have seen many interesting works related to quandles and racks, their weaker analogues.  A (co)homology theory for racks was introduced in \cite{FennRourkeSanderson1995, FennRourkeSanderson} as homology of the rack space of a rack. This motivated development of a (co)homology theory for quandles in \cite{CJKLS2003} where state-sum invariants using quandle cocycles as weights were defined.  These theories have been further developed and generalized in \cite{Andruskiewitsch2003, Lebed2013} which encompasses all the previously known theories. A recent work \cite{Szymik2019} has shown that quandle cohomology is a Quillen cohomology which is the cohomology group of a functor from the category of models to that of complexes. In the cohomology theory of groups, it is well-known that 2-cocycles are closely related to extensions of groups \cite{Brown1981}. Analogous theories have been developed for quandle cocycles \cite{CES02}. Besides being of independent algebraic interest these are useful in computing cocycle knot invariants. Abelian extensions of quandles have been investigated in \cite{CKS03} using diagrammatic approach. Further generalizations of quandle extensions by dynamical cocycles have been given in \cite{Andruskiewitsch2003}, and a more general abelian extension theory for racks and quandles, which also generalizes other variants in the literature, has been detailed in  \cite{Jackson2005}. Using the generalized quandle cohomology theory of \cite{Andruskiewitsch2003} some new knot invariants have been introduced in \cite{Carter2005}. Quandle cocycle invariants have been computed from quandle coloring numbers in \cite{ClarkSaitoVendramin2016}. Some properties of quandle extensions and of invariants based on 2-cocycles are investigated in \cite{ClarkSaito2016}. For example, it is proved that if the quandle extension is a conjugation quandle, then the state-sum invariant given by the 2-cocycle is constant. An explicit description of the second quandle cohomology of a finite indecomposable quandle has been given in \cite{GarciaVendramin2017} developing further ideas from \cite{EtingofGrana2003} that related the second cohomology of a quandle to the first cohomology of its enveloping group.
\par

Automorphisms of quandles reveal a lot about their internal structures and have been investigated in much detail in a series of works \cite{Bardakov2017, BarTimSin, Elhamdadi2012, FermanNowikTeicher2011, Hou2011}. Study of automorphisms of quandles is important from the point of view of knot theory. The totality of the number of colorings of diagram of a knot $K$ by a quandle $X$ is in bijection with the set $\Hom(Q(K), X)$ of homomorphisms from the knot quandle $Q(K)$ to $X$. Composing a fixed coloring of the knot diagram by automorphisms of $X$ gives rise to new colorings of the knot diagram. A recent work \cite{Horvat2017} has explored further relationship between the knot symmetries and the automorphisms of the knot quandle. 
\par

The purpose of this paper is to establish some interestingly new connections between cohomology, extensions and automorphisms of quandles. We also relate these objects to cohomology and extensions of groups.  
\par

The paper is organised as follows. Section \ref{sec-prelim} recalls some basic definitions that are used in the paper. In Section \ref{Quandle-cocycles-extensions}, we give a generalisation of the construction of a quandle extension due to \cite{Andruskiewitsch2003} (Proposition \ref{set-cocycle}).  In Section \ref{Action of automorphisms on dynamical 2-cocycles}, we prove that if $X$ is a quandle, $\Sigma_S$ the symmetric group on a set $S$ and $\mathcal{H}^2(X; S)$ the set of cohomology classes of dynamical 2-cocycles (in the sense of \cite{Andruskiewitsch2003}), then there is an action of the group $\Aut(X) \times \Sigma_S$ on $\mathcal{H}^2(X; S)$ (Proposition \ref{action-auto-on-dynamical-classes}).  This action plays a crucial role in the main results of the paper. In Section \ref{Exact sequence automorphisms dynamical cohomology}, we derive a four term exact sequence relating certain group of automorphisms of a quandle extension of $X$ by $S$ and $\mathcal{H}^2(X; S)$. More precisely, we prove in Theorem \ref{main-thm-1} that if $x_0 \in X$ is a fixed base point and $E=X \times_\alpha S$ an extension of $X$ by $S$ using a dynamical 2-cocycle $\alpha$, then there exists an exact sequence
\begin{equation}\label{intro-equation-1}
1 \longrightarrow \Ker(\Phi) \longrightarrow \Aut^{x_0}_S(E) \stackrel{\Phi}{\longrightarrow} \Aut^{x_0}(X) \times \Sigma_S \stackrel{\Omega}{\longrightarrow} \mathcal{H}^2(X; S),
\end{equation}
where $\Aut^{x_0}_S(E)$ is certain subgroup of $\Aut(E)$. Section \ref{abelian-quandle-cohomology-section} proves an abelian counterpart of the sequence \eqref{intro-equation-1} in which case the sequence takes a simpler and elegant form. We prove in Theorem \ref{abelian-main-theorem} that if $A$ is an abelian group and $E=X \times_\alpha A$ an abelian extension of $X$ by $A$ using a quandle 2-cocycle $\alpha$, then there exists an exact sequence
\begin{equation}\label{intro-equation-2}
1 \longrightarrow \Z^1(X;A) \longrightarrow \Aut_A(E) \stackrel{\Psi}{\longrightarrow} \Aut(X) \times \Aut(A) \stackrel{\Theta}{\longrightarrow} \Ho^2(X; A),
\end{equation}
where $\Aut_A(E)$ is a subgroup of $\Aut(E)$, $ \Z^1(X;A)$ is the group of quandle 1-cocycles and $\Ho^2(X; A)$ is the usual second quandle cohomology of $X$ with coefficients in $A$. The maps  $\Omega$ and $\Theta$ are not group homomorphisms in general. In Theorem \ref{main-thm-3} of Section \ref{properties of map theta}, we prove that  $\Theta$ is a derivation
with respect to the action of $\Aut(X) \times \Aut(A)$ on $\Ho^2(X; A)$.  Further, if $\Theta$ and $\Theta'$ correspond to two  quandle 2-cocycles, then $\Theta$ and $\Theta'$ differ by an inner derivation. Section \ref{adjoint-functor-section} shows that the construction of the conjugation, the core and the generalised Alexander quandle of a group can be viewed as an adjoint functor of some appropriate functor from the category of quandles to the category of groups (Propositions \ref{verbal-adjoint} and \ref{alexander-adjoint}). In Section \ref{groups-to-quandles-extensions-section}, we show that  these functors map extensions of groups to extensions of quandles (Propositions \ref{core-conj-prop} and \ref{Alex-prop}).  Finally, in Section \ref{Homomorphism group cohomology to quandle cohomology}, we construct some natural group homomorphisms from the second group cohomology of a group to the second quandle cohomology of its core and conjugation quandles. More precisely, Theorems \ref{homo-group-coho-quandle-coho} and \ref{group-homo-grouo-coho-conj} prove that if $G$ is a group and $A$ a trivial $G$-module, then there are natural group homomorphisms $$\Lambda:\Ho^2(G;A)_{sym} \to \mathcal{H}^2 \big(\Core(G);A\big )$$ and $$\Gamma:\Ho^2(G;A) \to \Ho^2\big(\Conj(G);A\big),$$ 
where $\Ho^2(G;A)_{sym}$ is the group of symmetric cohomology classes that correspond to extensions of $G$ by $A$ in which all groups are abelian.
\par

Throughout the paper, the results are stated for quandles only but they hold for racks as well. For notational convenience, sometimes we denote the value of a map $\phi$ at a point $x$ by $\phi_x$. All groups are written multiplicatively.
\medskip

\section{Preliminaries on quandles}\label{sec-prelim}
This section recalls some basic definitions for the convenience of the reader.
\par

A {\it quandle} is a non-empty set $X$ with a binary operation $(x,y) \mapsto x * y$ satisfying the following axioms:
\begin{enumerate}
\item[(Q1)] $x*x=x$ for all $x \in X$,
\item[(Q2)] For any $x,y \in X$ there exists a unique $z \in X$ such that $x=z*y$,
\item[(Q3)] $(x*y)*z=(x*z) * (y*z)$ for all $x,y,z \in X$.
\end{enumerate}

An algebraic system satisfying only (Q2) and (Q3)  is called a {\it rack}. Many interesting examples of quandles come from groups showing deep connections with group theory.

\begin{itemize}
\item If $G$ is a group and $n$ an integer, then the set $G$ equipped with the binary operation $a*b= b^{-n} a b^n$ forms a quandle $\Conj_n(G)$. For $n=1$, it is the {\it conjugation quandle} $\Conj(G)$.
\item If $f \in \Aut(G)$ is an automorphism of a group $G$, then the set $G$ with the binary operation $x*y=f(xy^{-1})y$ forms a quandle $\Alex_f (G)$ called the \textit{generalised  Alexander quandle} of $G$. In particular, if $G= \mathbb{Z}/n \mathbb{Z}$ and $\phi$ is the inversion, then we get the {\it dihedral quandle} $\R_n$.
\item If $G$ is a group, then the binary operation $a*b= b a^{-1} b$ turns the set $G$ into the {\it core quandle}  $\Core(G)$.
\end{itemize}
\medskip

A quandle  $X$ is called {\it trivial} if $x*y=x$ for all $x, y \in X$.  Unlike groups, a trivial quandle can contain arbitrary number of elements. Notice that the axioms (Q2) and (Q3) are equivalent to saying that the map $S_x: X \to X$ given by $$S_x(y)=y*x$$ is an automorphism of $X$ for each $x \in X$. Each such $S_x$ is called the {\it inner automorphism} of $X$ induced by $x$, and the group $\Inn(X)$ generated by such automorphisms is called the group of inner automorphisms. The group of all automorphisms of $X$ is denoted by $\Aut(X)$. A quandle $X$ is {\it connected} if $\Inn(X)$ acts transitively on $X$. Every quandle $X$ gives rise to a universal group $$\Adj(X)= \big\{e_x,~x \in X~|~e_{x*y}=e_ye_xe_y^{-1}, x, y \in X \big\},$$ called the {\it adjoint} or {\it associated} or {\it enveloping group} of  $X$. If $\mathcal{Q}$ is the category of quandles and $\mathcal{G}$ the category of groups, then the functor
$\Adj: \mathcal{Q} \to \mathcal{G}$ is left adjoint to the functor $\Conj: \mathcal{G} \to \mathcal{Q}$ and the adjoint group of a knot quandle is the knot group \cite[Section 15]{Joyce1982}. 
\medskip

\section{Quandle cocycles and extensions}\label{Quandle-cocycles-extensions}

Extension theory for quandles using quandle cocycles was first proposed in \cite{CES02, CKS03}. A far reaching generalization of quandle extensions using dynamical 2-cocycles which encompasses all the previous constructions was given in \cite{Andruskiewitsch2003}. We begin with a generalisation of a construction of Andruskiewitsch and Grana in \cite[Lemma 2.1]{Andruskiewitsch2003}.

\begin{proposition}\label{set-cocycle}
Let $X$ and $S$ be two sets, $\alpha: X \times X \to \Maps(S \times S, S)$ and $\beta: S \times S \to \Maps(X \times X, X)$ two maps. Then the set $X \times S$ with the binary operation 
\begin{equation}\label{genralised-quandle-operation}
(x, s)* (y,t)= \big( \beta_{s, t}(x, y), ~\alpha_{x, y}(s, t) \big)
\end{equation}
forms a quandle if and only if the following conditions hold:
\begin{enumerate}
\item $\beta_{s, s}(x, x)=x$ and $\alpha_{x, x}(s, s)=s$ for all $x \in X$, $s \in S$;
\item  for each $(y, t) \in X \times S$, the map $(x, s) \mapsto  \big( \beta_{s, t}(x, y),~ \alpha_{x, y}(s, t) \big)$ is a bijection;
\item $\beta_{\alpha_{x, y}(s, t), u}\Big(\beta_{s, t}(x, y),~ z \Big)= \beta_{\alpha_{x, z}(s, u), \alpha_{y, z}(t, u)}\Big(\beta_{s, u}(x, z), ~\beta_{t, u}(y, z) \Big)$ 
\item[] and
\item[] $\alpha_{\beta_{s, t}(x, y), z}\Big(\alpha_{x, y}(s, t), ~u \Big)= \alpha_{\beta_{s, u}(x, z), \beta_{t, u}(y, z)}\Big(\alpha_{x, z}(s, u), ~\alpha_{y, z}(t, u) \Big)$.
\end{enumerate}
\end{proposition}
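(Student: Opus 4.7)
The plan is to directly translate each of the three quandle axioms (Q1), (Q2), (Q3) into a condition on the pair $(\alpha, \beta)$, and observe that these translations are exactly the three conditions (1), (2), (3) in the proposition. Since the binary operation on $X \times S$ is defined componentwise via $\beta$ and $\alpha$, each axiom can be checked by equating first and second coordinates separately.

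First I would handle (Q1). Computing $(x,s)*(x,s) = \big(\beta_{s,s}(x,x),\ \alpha_{x,x}(s,s)\big)$, the equality $(x,s)*(x,s)=(x,s)$ for all $x\in X$ and $s\in S$ is equivalent to $\beta_{s,s}(x,x)=x$ and $\alpha_{x,x}(s,s)=s$, which is exactly condition (1). Next, axiom (Q2) asserts precisely that the right multiplication map $(x,s)\mapsto (x,s)*(y,t)$ is a bijection of $X\times S$ for each fixed $(y,t)$, which is condition (2) verbatim.

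For (Q3), I would expand both sides of $\big((x,s)*(y,t)\big)*(z,u) = \big((x,s)*(z,u)\big)*\big((y,t)*(z,u)\big)$ using the definition \eqref{genralised-quandle-operation}. On the left, one pass of the operation gives $\big(\beta_{s,t}(x,y),\ \alpha_{x,y}(s,t)\big)$ and a second pass with $(z,u)$ yields
\begin{equation*}
\Big(\beta_{\alpha_{x,y}(s,t),\,u}\big(\beta_{s,t}(x,y),\,z\big),\ \alpha_{\beta_{s,t}(x,y),\,z}\big(\alpha_{x,y}(s,t),\,u\big)\Big).
\end{equation*}
On the right, applying the operation to the two pairs $(x,s)*(z,u) = \big(\beta_{s,u}(x,z),\alpha_{x,z}(s,u)\big)$ and $(y,t)*(z,u) = \big(\beta_{t,u}(y,z),\alpha_{y,z}(t,u)\big)$ yields
\begin{equation*}
\Big(\beta_{\alpha_{x,z}(s,u),\,\alpha_{y,z}(t,u)}\big(\beta_{s,u}(x,z),\,\beta_{t,u}(y,z)\big),\ \alpha_{\beta_{s,u}(x,z),\,\beta_{t,u}(y,z)}\big(\alpha_{x,z}(s,u),\,\alpha_{y,z}(t,u)\big)\Big).
\end{equation*}
Comparing first coordinates yields the first identity in condition (3), and comparing second coordinates yields the second identity; quantifying over all $x,y,z \in X$ and $s,t,u\in S$ gives the stated equivalence.

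The argument is essentially bookkeeping; the only real obstacle is keeping the indices straight in the (Q3) expansion, where each coordinate involves a nested application of $\alpha$ and $\beta$. Organising the verification by dealing with the two coordinates separately, and in the order (Q1), (Q2), (Q3), keeps the calculation transparent and avoids any need for additional assumptions on $X$, $S$, $\alpha$, or $\beta$. Both directions of the equivalence come out of the same computation: the "if" direction plugs the three conditions into the formulas above to check the axioms, while the "only if" direction reads the conditions off by specialising the axioms to arbitrary $(x,s),(y,t),(z,u)$.
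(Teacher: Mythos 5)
Your proposal is correct and follows exactly the same route as the paper's proof: axiom (Q1) gives condition (1) by direct evaluation, (Q2) is condition (2) verbatim, and expanding both sides of (Q3) coordinatewise yields the two identities in condition (3). The expansions you write down match the paper's computation precisely, so there is nothing to add.
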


\begin{proof}
The axiom (Q1) holds if and only if $\beta_{s, s}(x, x)=x$ and $\alpha_{x, x}(s, s)=s$ for all $x \in X$, $s \in S$. The axiom (Q2) is equivalent to condition (2). Finally, if $(x, s), (y, t), (z, u) \in X \times S$, then 
\begin{eqnarray*}
\big((x, s)*(y, t) \big)* (z, u) &=& \big( \beta_{s, t}(x, y), ~\alpha_{x, y}(s, t) \big) *(z, u)\\
&=& \Big(\beta_{\alpha_{x, y}(s, t), u}\big(\beta_{s, t}(x, y), z \big),~ \alpha_{\beta_{s, t}(x, y), z}\big(\alpha_{x, y}(s, t), u \big) \Big)
\end{eqnarray*}
and
\begin{eqnarray*}
&& \big((x, s)*(z, u) \big)* \big((y, t)*(z, u) \big)\\
&=& \big( \beta_{s, u}(x, z), ~\alpha_{x, z}(s, u) \big) *\big( \beta_{t, u}(y, z), ~\alpha_{y, z}(t, u) \big)\\
&=& \Big(\beta_{\alpha_{x, z}(s, u), ~\alpha_{y, z}(t, u)}\big(\beta_{s, u}(x, z), ~\beta_{t, u}(y, z) \big), ~\alpha_{\beta_{s, u}(x, z), ~\beta_{t, u}(y, z)}\big(\alpha_{x, z}(s, u), ~\alpha_{y, z}(t, u) \big)\Big).
\end{eqnarray*}
Thus, axiom (Q3) is equivalent to condition (3).
\end{proof}

We denote the quandle obtained in Proposition \ref{set-cocycle} by $X~ _{\alpha}\times_{\beta} S$. 

\begin{corollary}
Let $X$ and $S$ be two sets, $\alpha: X \times X \to \Maps(S \times S, S)$ and $\beta: S \to \Maps(X \times X, X)$ two maps. Then the set $X \times S$ with the binary operation 
$$(x, s)* (y,t)= \big( \beta_{t}(x, y), \alpha_{x, y}(s, t) \big)$$ forms a quandle if and only if the following conditions hold:
\begin{enumerate}
\item $\beta_{s}(x, x)=x$ and $\alpha_{x, x}(s, s)=s$ for all $x \in X$, $s \in S$;
\item $\beta_{s}(-, y): X \to X$ and  $\alpha_{x, y}(-, t): S \to S$ are bijections for all $x, y \in X$, $s \in S$;
\item $\beta_{u}\big(\beta_{t}(x, y), ~z \big)= \beta_{\alpha_{y, z}(t, u)}\big(\beta_{u}(x, z), ~\beta_{u}(y, z) \big)$ 
\item[] and
\item[] $\alpha_{\beta_{t}(x, y), z}\big(\alpha_{x, y}(s, t), ~u \big)= \alpha_{\beta_{u}(x, z), ~\beta_{u}(y, z)}\big(\alpha_{x, z}(s, u), ~\alpha_{y, z}(t, u) \big)$.
\end{enumerate}
\end{corollary}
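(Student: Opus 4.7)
The plan is to deduce the corollary directly from Proposition \ref{set-cocycle} by specialising to the subclass of $\beta$'s whose value $\beta_{s,t}(x,y)$ is independent of the first index $s$. Concretely, I would define $\tilde\beta : S\times S \to \Maps(X\times X, X)$ by $\tilde\beta_{s,t}(x,y) := \beta_t(x,y)$ and apply Proposition \ref{set-cocycle} to the pair $(\alpha, \tilde\beta)$. The task then reduces to checking that each of the three conditions of the proposition simplifies exactly to the corresponding condition in the corollary.

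Conditions (1) and (3) are handled by direct substitution. For (1), $\tilde\beta_{s,s}(x,x)=\beta_s(x,x)$, so the requirement collapses to $\beta_s(x,x)=x$, while the $\alpha$-part is unchanged. For (3), plugging $\tilde\beta_{a,b}(p,q)=\beta_b(p,q)$ into the first identity drops the redundant first index wherever it appears and produces $\beta_u(\beta_t(x,y),z)=\beta_{\alpha_{y,z}(t,u)}(\beta_u(x,z),\beta_u(y,z))$; an identical simplification of the second identity reproduces the $\alpha$-identity as stated. Both match the corollary exactly.

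The only condition that requires a separate argument is (2). Here one must show that bijectivity of the single self-map $F_{y,t}$ of $X\times S$ sending $(x,s)$ to $(\beta_t(x,y),\alpha_{x,y}(s,t))$ is equivalent to the pair of bijectivity statements in the corollary. My plan is to observe that $F_{y,t}$ is \emph{triangular}, in the sense that its first coordinate depends only on $x$. Such a map is a bijection precisely when $x\mapsto \beta_t(x,y)$ is a bijection of $X$ and, for each fixed $x$, $s\mapsto \alpha_{x,y}(s,t)$ is a bijection of $S$. Letting $(y,t)$ range over $X\times S$ then yields the corollary's condition (2), with the quantifiers matching up correctly (after the harmless relabelling of the dummy variable $t$ as $s$ in $\beta_s(-,y)$).

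The main obstacle is purely notational: keeping track of which indices collapse under the specialisation and verifying that the translated identities line up with those printed in the corollary. Once this bookkeeping is done, no further computation is required beyond the triangularity remark for (2).
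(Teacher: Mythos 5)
Your route is the one the paper itself intends: the corollary is printed immediately after Proposition \ref{set-cocycle} with no proof, precisely because it is the specialisation $\tilde\beta_{s,t}:=\beta_t$ of that proposition, and your translations of conditions (1) and (3) are exactly right.

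The one place deserving more care is the ``precisely when'' in your treatment of condition (2). The direction you need for the ``if'' part of the corollary is immediate: if $x\mapsto\beta_t(x,y)$ is a bijection of $X$ and each $s\mapsto\alpha_{x,y}(s,t)$ is a bijection of $S$, then $F_{y,t}$ has the explicit inverse $(x',s')\mapsto\bigl(g^{-1}(x'),\,h_{g^{-1}(x')}^{-1}(s')\bigr)$ where $g=\beta_t(-,y)$ and $h_x=\alpha_{x,y}(-,t)$. The converse, which the ``only if'' part requires, is not a formal consequence of triangularity alone: for infinite sets a map of the form $(x,s)\mapsto(g(x),h_x(s))$ can be a bijection of $X\times S$ even though $g$ is merely surjective and the $h_x$ are not bijections (take $X=S=\mathbb{N}$, $g$ the shift identifying $0$ and $1$, and $h_0,h_1$ the even and odd embeddings). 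When $X$ and $S$ are finite your argument does close: surjectivity of $F_{y,t}$ forces $g$ surjective, hence bijective, and injectivity of $F_{y,t}$ then forces each $h_x$ injective, hence bijective. So either add the finiteness hypothesis to that step or note explicitly that the equivalence in (2) is being asserted only in the direction needed to build the quandle; this imprecision is already present in the paper's unproved statement, but since you state the equivalence as a fact, you should flag or justify it.
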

\medskip

Andruskiewitsch and Grana \cite[Section 2.1]{Andruskiewitsch2003} considered a special case of Proposition \ref{set-cocycle} when $X$ is a quandle and $$\beta_{s, t}(x, y)=x*y$$ for all $x, y \in X$ and $s, t \in S$. In this case the map $\alpha$ satisfy the
conditions
\begin{equation}\label{dynamical-cocycle-condition1}
\alpha_{x, x}(s, s)=s,
\end{equation}
\begin{equation}\label{dynamical-cocycle-condition2}
\alpha_{x, y}(-, t): S \to S~\textrm{is a bijection}
\end{equation}
and the  {\it cocycle condition}
\begin{equation}\label{dynamical-cocycle-condition3}
\alpha_{x*y, z}\big(\alpha_{x, y}(s, t), ~u \big)= \alpha_{x* z, y*z}\big(\alpha_{x, z}(s, u),~ \alpha_{y, z}(t, u) \big)
\end{equation}
for all $x, y \in X$ and $s, t \in S$. Such an $\alpha$ is referred as a {\it dynamical 2-cocycle}. Further, in this case, the quandle operation \eqref{genralised-quandle-operation}  on $X \times S$ becomes 
\begin{equation}\label{dynamical-quandle-operation}
(x, s)* (y,t)= \big( x* y, ~\alpha_{x, y}(s, t) \big).
\end{equation} 
The quandle so obtained is called an  {\it extension} of $X$ by $S$ through $\alpha$ and is denoted by $X \times_{\alpha} S$. It is easy to see that for each $x \in X$, the set $S$ forms a quandle with the binary operation $$s *_x t :=\alpha_{x,x}(s,t)$$ for all $s, t \in S$.

\begin{corollary}\label{isomorphic-s-quandles}
If $X$ is a connected quandle, then $(S, *_x) \cong (S, *_y)$ for all $x, y \in X$.
\end{corollary}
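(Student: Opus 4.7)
The plan is to show that for any $x, z \in X$ the quandles $(S, *_x)$ and $(S, *_{x*z})$ are isomorphic, and then to propagate this along orbits of $\Inn(X)$ using connectedness of $X$.

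First I would fix $x, z \in X$ and choose any $t_0 \in S$, and consider the map $\phi_{t_0}: S \to S$ defined by $\phi_{t_0}(s) = \alpha_{x, z}(s, t_0)$. Bijectivity of $\phi_{t_0}$ is immediate from \eqref{dynamical-cocycle-condition2}. The key point is to check that $\phi_{t_0}$ is a quandle homomorphism from $(S, *_x)$ to $(S, *_{x*z})$, i.e.\ that
\begin{equation*}
\alpha_{x, z}\bigl(\alpha_{x, x}(s, t),\, t_0 \bigr) \;=\; \alpha_{x*z,\, x*z}\bigl(\alpha_{x, z}(s, t_0),\, \alpha_{x, z}(t, t_0) \bigr)
\end{equation*}
for all $s, t \in S$. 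This is precisely the specialisation of the dynamical cocycle condition \eqref{dynamical-cocycle-condition3} obtained by setting $y := x$ and renaming the auxiliary parameter $u$ as $t_0$, together with the identity $x*x = x$ from \eqref{dynamical-cocycle-condition1}. Hence $\phi_{t_0}$ is a quandle isomorphism.

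Next I would use connectedness. Given arbitrary $x, y \in X$, transitivity of $\Inn(X)$ provides $z_1, \ldots, z_n \in X$ and signs $\epsilon_i \in \{\pm 1\}$ with $y = S_{z_n}^{\epsilon_n} \cdots S_{z_1}^{\epsilon_1}(x)$. For each step of the form $w \mapsto w * z$ the previous paragraph supplies an isomorphism $(S, *_w) \cong (S, *_{w*z})$ directly, and for each step of the form $w \mapsto S_z^{-1}(w)$ the same construction, applied with base point $w' := S_z^{-1}(w)$ so that $w'*z = w$, supplies $(S, *_{w'}) \cong (S, *_w)$ whose inverse is the needed isomorphism. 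Composing these isomorphisms along the chain yields $(S, *_x) \cong (S, *_y)$.

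The only real obstacle is the first step: spotting the correct substitution in the cocycle condition \eqref{dynamical-cocycle-condition3} that turns it into the homomorphism identity for $\phi_{t_0}$. Once this is identified the remainder is a bookkeeping argument using transitivity of $\Inn(X)$, and the statement does not require any further structural hypotheses on $\alpha$ beyond those that define a dynamical $2$-cocycle.
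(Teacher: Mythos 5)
Your proposal is correct and follows essentially the same route as the paper: the same map $s \mapsto \alpha_{x,z}(s,t_0)$, verified to be an isomorphism $(S,*_x) \to (S,*_{x*z})$ by specialising the cocycle condition \eqref{dynamical-cocycle-condition3} at $y=x$, followed by transitivity of $\Inn(X)$. Your only (harmless) slip is attributing $x*x=x$ to \eqref{dynamical-cocycle-condition1} rather than to axiom (Q1); otherwise you merely spell out the connectedness bookkeeping that the paper leaves implicit.
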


\begin{proof}
Let $x, z \in X$ and $u \in S$. Define a map $\phi: (S, *_x) \to (S, *_{x*z})$ by 
$$\phi(s)= \alpha_{x, z}(s, u)$$ for all $s \in S$.
Then, for $s, t \in S$, we have
\begin{eqnarray*}
\phi(s *_x t) &=& \alpha_{x, z}(s *_x t, u)\\
&=& \alpha_{x, z}\big( \alpha_{x, x}(s, t), u\big)\\
&=&\alpha_{x* z, x*z}\big(\alpha_{x, z}(s, u),~ \alpha_{x, z}(t, u) \big),~\textrm{by \eqref{dynamical-cocycle-condition3}}\\
&=&\phi(s) *_{x*z} \phi(t),
\end{eqnarray*}
and $\phi$ is an isomorphism of quandles. Since $X$ is connected, it follows that $(S, *_x) \cong (S, *_y)$ for all $x, y \in X$.
\end{proof}

\section{Action of automorphisms on dynamical 2-cocycles}\label{Action of automorphisms on dynamical 2-cocycles}

Let $X$ be a quandle, $\Sigma_S$ the symmetric group on a set $S$ and $\mathcal{Z}^2(X; S)$ the set of all dynamical 2-cocycles. For $(\phi, \theta) \in \Aut(X) \times \Sigma_S$ and a dynamical 2-cocycle $\alpha \in \mathcal{Z}^2(X; S)$, define $$^{(\phi, \theta)}\alpha:X \times X \to \Map(S \times S, S)$$ by setting
\begin{equation}\label{auto-action}
^{(\phi, \theta)}\alpha_{x, y}(s, t):= \theta \Big(\alpha_{\phi^{-1}(x), \phi^{-1}(y)}\big(\theta^{-1}(s), ~\theta^{-1}(t)\big)  \Big)
\end{equation}
for $x, y \in X$ and $s, t \in S$.

\begin{lemma}\label{aut-action-cocycles}
The map $\big(\Aut(X) \times \Sigma_S\big) \times \mathcal{Z}^2(X; S) \to \mathcal{Z}^2(X; S)$ given by
$$\big(( \phi, \theta), \alpha\big) \mapsto~ ^{(\phi, \theta)}\alpha$$ defines a left action of the group $\Aut(X) \times \Sigma_S$ on the set $\mathcal{Z}^2(X; S)$.
\end{lemma}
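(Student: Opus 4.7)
The proof plan has two parts: first check that ${}^{(\phi,\theta)}\alpha$ is again in $\mathcal{Z}^2(X;S)$, then verify the two axioms of a left action.

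For membership in $\mathcal{Z}^2(X;S)$, I would verify in turn the three defining conditions \eqref{dynamical-cocycle-condition1}, \eqref{dynamical-cocycle-condition2} and \eqref{dynamical-cocycle-condition3}. The first reduces at once to $\theta\bigl(\alpha_{\phi^{-1}(x),\phi^{-1}(x)}(\theta^{-1}(s),\theta^{-1}(s))\bigr)=\theta(\theta^{-1}(s))=s$, using \eqref{dynamical-cocycle-condition1} for $\alpha$. The second follows because ${}^{(\phi,\theta)}\alpha_{x,y}(-,t)$ is by definition the composition of the three bijections $\theta^{-1}$, $\alpha_{\phi^{-1}(x),\phi^{-1}(y)}(-,\theta^{-1}(t))$ and $\theta$.

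The main obstacle, and the only step involving real bookkeeping, is the cocycle identity \eqref{dynamical-cocycle-condition3} for ${}^{(\phi,\theta)}\alpha$. The trick is that since $\phi \in \Aut(X)$, the inverse $\phi^{-1}$ is again a quandle automorphism, so $\phi^{-1}(x*y)=\phi^{-1}(x)*\phi^{-1}(y)$ and $\phi^{-1}(x*z)=\phi^{-1}(x)*\phi^{-1}(z)$ etc. Expanding the left hand side of \eqref{dynamical-cocycle-condition3} for ${}^{(\phi,\theta)}\alpha$ at $(x,y,z,s,t,u)$ gives, after canceling $\theta^{-1}\theta$ inside the outermost $\theta$,
\[
\theta\Bigl(\alpha_{\phi^{-1}(x)*\phi^{-1}(y),\,\phi^{-1}(z)}\bigl(\alpha_{\phi^{-1}(x),\phi^{-1}(y)}(\theta^{-1}(s),\theta^{-1}(t)),\,\theta^{-1}(u)\bigr)\Bigr),
\]
and expanding the right hand side analogously yields
\[
\theta\Bigl(\alpha_{\phi^{-1}(x)*\phi^{-1}(z),\,\phi^{-1}(y)*\phi^{-1}(z)}\bigl(\alpha_{\phi^{-1}(x),\phi^{-1}(z)}(\theta^{-1}(s),\theta^{-1}(u)),\,\alpha_{\phi^{-1}(y),\phi^{-1}(z)}(\theta^{-1}(t),\theta^{-1}(u))\bigr)\Bigr).
\]
These two expressions agree by applying the cocycle condition \eqref{dynamical-cocycle-condition3} for $\alpha$ at the points $\phi^{-1}(x),\phi^{-1}(y),\phi^{-1}(z) \in X$ and $\theta^{-1}(s),\theta^{-1}(t),\theta^{-1}(u)\in S$.

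For the action axioms, taking $(\phi,\theta)=(\id_X,\id_S)$ in \eqref{auto-action} gives ${}^{(\id,\id)}\alpha=\alpha$ immediately. For compositionality, set $(\phi,\theta)=(\phi_1\phi_2,\theta_1\theta_2)$ in $\Aut(X)\times\Sigma_S$, so that $(\phi_1\phi_2)^{-1}=\phi_2^{-1}\phi_1^{-1}$ and likewise for $\theta$. Starting from ${}^{(\phi_1,\theta_1)}\bigl({}^{(\phi_2,\theta_2)}\alpha\bigr)_{x,y}(s,t)$ and applying \eqref{auto-action} twice collapses the nested expression to $(\theta_1\theta_2)\bigl(\alpha_{(\phi_1\phi_2)^{-1}(x),(\phi_1\phi_2)^{-1}(y)}((\theta_1\theta_2)^{-1}(s),(\theta_1\theta_2)^{-1}(t))\bigr)={}^{(\phi_1\phi_2,\,\theta_1\theta_2)}\alpha_{x,y}(s,t)$, which is the required equality. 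I expect the whole verification to be purely formal once the notation is unwound; the only place where a quandle axiom is genuinely used is the application of \eqref{dynamical-cocycle-condition3} after transporting everything via $\phi^{-1}$ and $\theta^{-1}$.
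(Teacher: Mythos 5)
Your proposal is correct and follows essentially the same route as the paper: verify the three dynamical cocycle conditions for ${}^{(\phi,\theta)}\alpha$ (with the cocycle identity \eqref{dynamical-cocycle-condition3} checked by transporting the points via $\phi^{-1}$ and $\theta^{-1}$ and invoking the identity for $\alpha$), then check compositionality of the action. You supply slightly more detail than the paper on conditions \eqref{dynamical-cocycle-condition1} and \eqref{dynamical-cocycle-condition2} and on the identity axiom, which the paper dismisses as easy, but the substance is identical.
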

\begin{proof} 
Let $(\phi, \theta) \in \Aut(X) \times \Sigma_S$ and $\alpha \in \mathcal{Z}^2(X; S)$. We first show that $^{(\phi, \theta)}\alpha$ is a dynamical 2-cocycle. For $x, y \in X$ and $s, t \in S$, we compute
\begin{eqnarray*}
&  & ^{(\phi, \theta)}\alpha_{x*y, z}\Big(^{(\phi, \theta)}\alpha_{x, y}(s, t), u \Big) \\
& = & \theta \Big(\alpha_{\phi^{-1}(x)*\phi^{-1}(y), ~\phi^{-1}(z)}\Big(\theta^{-1}\big(^{(\phi, \theta)}\alpha_{x, y}(s, t)\big),~ \theta^{-1}(u)\Big)  \Big)\\
& = & \theta \Big(\alpha_{\phi^{-1}(x)*\phi^{-1}(y), ~\phi^{-1}(z)}\Big(\alpha_{\phi^{-1}(x), ~\phi^{-1}(y)}\big(\theta^{-1}(s),~ \theta^{-1}(t)\big),~ \theta^{-1}(u)\Big)  \Big)\\
& = & \theta \Big( \alpha_{\phi^{-1}(x)* \phi^{-1}(z),~ \phi^{-1}(y)*\phi^{-1}(z)}\Big(\alpha_{\phi^{-1}(x),~ \phi^{-1}(z)} \big(\theta^{-1}(s), ~\theta^{-1}(u)\big), ~\alpha_{\phi^{-1}(y), ~\phi^{-1}(z)}\big(\theta^{-1}(t), ~\theta^{-1}(u)\big) \Big) \Big)\\
& & \textrm{by the cocycle condition \eqref{dynamical-cocycle-condition3} for}~\alpha \\
& = & \theta \Big( \alpha_{\phi^{-1}(x*z), \phi^{-1}(y*z)}\Big(\alpha_{\phi^{-1}(x), ~\phi^{-1}(z)}\big(\theta^{-1}(s),~ \theta^{-1}(u)\big),~ \alpha_{\phi^{-1}(y), ~\phi^{-1}(z)}\big(\theta^{-1}(t),~ \theta^{-1}(u)\big) \Big) \Big)\\
& = & ^{(\phi, \theta)}\alpha_{x* z, y*z}\Big(^{(\phi, \theta)}\alpha_{x, z}(s, u),~~ ^{(\phi, \theta)}\alpha_{y, z}(t, u) \Big),
\end{eqnarray*}
and hence $^{(\phi, \theta)}\alpha$ satisfy \eqref{dynamical-cocycle-condition3}. It follows easily that $ ^{(\phi, \theta)}\alpha$ satisfy \eqref{dynamical-cocycle-condition1} and \eqref{dynamical-cocycle-condition2} which shows that $ ^{(\phi, \theta)}\alpha \in \mathcal{Z}^2(X; S)$.
\par

Given $(\phi_i, \theta_i) \in \Aut(X) \times \Sigma_S$ for $i=1, 2$, we see that
\begin{eqnarray*}
^{(\phi_1, \theta_1)(\phi_2, \theta_2)}\alpha_{x, y}(s, t) & = & ^{(\phi_1\phi_2, ~\theta_1\theta_2)}\alpha_{x, y}(s, t) \\
& = & \theta_1\theta_2 \Big(\alpha_{\phi_2^{-1}\phi_1^{-1}(x), ~\phi_2^{-1}\phi_1^{-1}(y)}\big(\theta_2^{-1}\theta_1^{-1}(s), ~\theta_2^{-1}\theta_1^{-1}(t)\big)  \Big)\\
& = & \theta_1 \Big( ^{(\phi_2, \theta_2)}\alpha_{\phi_1^{-1}(x), ~\phi_1^{-1}(y)}\big(\theta_1^{-1}(s),~ \theta_1^{-1}(t)\big) \Big)\\
& = & ^{(\phi_1, \theta_1)}\big(^{(\phi_2, \theta_2)}\alpha\big)_{x, y}(s, t),
\end{eqnarray*}
and hence $\Aut(X) \times \Sigma_S$ acts on $\mathcal{Z}^2(X; S)$ from the left. 
\end{proof}

Two dynamical 2-cocycles $\alpha, \beta \in \mathcal{Z}^2(X; S)$ are said to be {\it cohomologous } if there exists a map $\lambda:X \to \Sigma_S$ such that
\begin{equation}\label{cohomologous}
\beta_{x, y}(s, t)=\lambda_{x*y} \Big(\alpha_{x, y}\big(\lambda_x^{-1}(s),~ \lambda_y^{-1}(t) \big)\Big)
\end{equation}
for all $x, y \in X$ and $s, t \in S$. It can be seen easily that being cohomologous is an equivalence relation on $\mathcal{Z}^2(X; S)$, and we denote the set of cohomology classes of dynamical 2-cocycles by $\mathcal{H}^2(X; S)$. Further, we denote the equivalence class of $\alpha \in \mathcal{Z}^2(X; S)$ by $[\alpha]  \in \mathcal{H}^2(X; S)$.

\begin{proposition}\label{action-auto-on-dynamical-classes}
The map $\big(\Aut(X) \times \Sigma_S\big) \times \mathcal{H}^2(X; S) \to \mathcal{H}^2(X; S)$ given by
$$\big(( \phi, \theta), [\alpha] \big) \mapsto~ [^{(\phi, \theta)}\alpha]$$ defines a left action of the group $\Aut(X) \times \Sigma_S$ on the set $\mathcal{H}^2(X; S)$.
\end{proposition}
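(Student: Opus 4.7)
By Lemma \ref{aut-action-cocycles}, the group $\Aut(X) \times \Sigma_S$ already acts on the set $\mathcal{Z}^2(X;S)$ of dynamical 2-cocycles. Moreover, the action law $^{(\phi_1,\theta_1)(\phi_2,\theta_2)}\alpha = {}^{(\phi_1,\theta_1)}({}^{(\phi_2,\theta_2)}\alpha)$ established there does not depend on whether $\alpha$ is viewed as a cocycle or as a cohomology class. Hence the only thing left to verify is that the assignment $(\phi,\theta) \cdot [\alpha] := [{}^{(\phi,\theta)}\alpha]$ is well-defined on the cohomology set $\mathcal{H}^2(X;S)$. In other words, if $\alpha$ and $\beta$ are cohomologous via a map $\lambda: X \to \Sigma_S$ as in \eqref{cohomologous}, I must exhibit a map $\mu:X \to \Sigma_S$ witnessing that $^{(\phi,\theta)}\alpha$ and $^{(\phi,\theta)}\beta$ are cohomologous.

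The plan is to make the natural guess suggested by pulling back $\lambda$ along $\phi^{-1}$ and conjugating by $\theta$, namely
\[
\mu_x \;:=\; \theta \, \lambda_{\phi^{-1}(x)} \, \theta^{-1} \quad \in \Sigma_S, \qquad x \in X.
\]
With this definition one has $\mu_x^{-1} = \theta \, \lambda_{\phi^{-1}(x)}^{-1}\, \theta^{-1}$, so $\theta^{-1}\mu_x^{-1}(s) = \lambda_{\phi^{-1}(x)}^{-1}(\theta^{-1}(s))$, and similarly for $t$. The verification then proceeds by unwinding definitions on both sides of the target identity
\[
{}^{(\phi,\theta)}\beta_{x,y}(s,t) \;=\; \mu_{x*y}\!\Big({}^{(\phi,\theta)}\alpha_{x,y}\big(\mu_x^{-1}(s),\,\mu_y^{-1}(t)\big)\Big).
\]
On the left, applying \eqref{auto-action} and then the cohomology relation \eqref{cohomologous} for $\beta$ expresses ${}^{(\phi,\theta)}\beta_{x,y}(s,t)$ as $\theta\lambda_{\phi^{-1}(x)*\phi^{-1}(y)}(\alpha_{\phi^{-1}(x),\phi^{-1}(y)}(\lambda_{\phi^{-1}(x)}^{-1}\theta^{-1}(s),\lambda_{\phi^{-1}(y)}^{-1}\theta^{-1}(t)))$; here I use that $\phi$ is a quandle automorphism, so $\phi^{-1}(x*y)=\phi^{-1}(x)*\phi^{-1}(y)$. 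On the right, substituting the definitions of $\mu$ and of ${}^{(\phi,\theta)}\alpha$ yields the identical expression. So $^{(\phi,\theta)}\alpha \sim {}^{(\phi,\theta)}\beta$, and the action descends to $\mathcal{H}^2(X;S)$.

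The only conceptual subtlety, and what I expect to be the main (very mild) obstacle, is spotting the correct $\mu$: one must recognise that because the action \eqref{auto-action} twists by $\phi^{-1}$ on the quandle variables and conjugates by $\theta$ on the set variables, the cohomology-witness $\lambda$ must be transported by exactly the same recipe. Once this guess is made, the proof is a straightforward substitution, and the computation that $^{(\phi,\theta)}\alpha$ lies in $\mathcal{Z}^2(X;S)$ has already been carried out in Lemma \ref{aut-action-cocycles}, so nothing further about the cocycle identity is required.
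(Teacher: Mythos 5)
Your proposal is correct and follows essentially the same route as the paper: the paper likewise reduces the claim to well-definedness on cohomology classes and defines the witnessing map $\lambda'_x = \theta\,\lambda_{\phi^{-1}(x)}\,\theta^{-1}$, which is exactly your $\mu$, and then verifies the relation by the same substitution. Nothing further is needed.
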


\begin{proof}
It only needs to be shown that cohomologous cocycles are mapped to cohomologous cocycles under the action in Lemma \ref{aut-action-cocycles}. Let $(\phi, \theta) \in \Aut(X) \times \Sigma_S$,  $\alpha, \beta \in \mathcal{Z}^2(X; S)$ and $\lambda:X \to \Sigma_S$ such that \eqref{cohomologous} is satisfied. Define a map $\lambda':X \to \Sigma_S$ by setting $\lambda'_x= \theta \lambda_{\phi^{-1}(x)} \theta^{-1}$ for $x \in X$. Then, for $x, y \in X$ and $s, t \in S$, we have
\begin{eqnarray*}
&  & ^{(\phi, \theta)}\beta_{x, y}(s, t) \\
& = & \theta \Big(\beta_{\phi^{-1}(x),~ \phi^{-1}(y)}\big(\theta^{-1}(s), ~~\theta^{-1}(t)\big)  \Big)\\ 
& = & \theta \Big(\lambda_{\phi^{-1}(x)* \phi^{-1}(y)} \Big(\alpha_{\phi^{-1}(x), ~\phi^{-1}(y)} \Big( \lambda_{\phi^{-1}(x)}^{-1}\big(\theta^{-1}(s) \big), ~~\lambda_{\phi^{-1}(y)}^{-1}\big(\theta^{-1}(t) \big) \Big) \Big) \Big),\\ 
& & \textrm{by using}~ \eqref{cohomologous}\\
& = & \theta \Big(\lambda_{\phi^{-1}(x*y)} \Big(\alpha_{\phi^{-1}(x),~ \phi^{-1}(y)} \Big( \lambda_{\phi^{-1}(x)}^{-1}\theta^{-1}(s), ~~\lambda_{\phi^{-1}(y)}^{-1}\theta^{-1}(t) \Big) \Big) \Big)\\
& = & \Big( \theta \lambda_{\phi^{-1}(x*y)} \theta^{-1} \Big) \theta \Big(\alpha_{\phi^{-1}(x),~ \phi^{-1}(y)} \Big( \theta^{-1} (\theta \lambda_{\phi^{-1}(x)}\theta^{-1})^{-1}(s), ~~\theta^{-1}(\theta \lambda_{\phi^{-1}(y)}\theta^{-1})^{-1}(t)  \Big) \Big)\\  
& = & \lambda'_{x*y} ~ \theta \Big(\alpha_{\phi^{-1}(x), ~\phi^{-1}(y)} \Big( \theta^{-1} {\lambda'_x}^{-1}(s), ~~\theta^{-1}{\lambda'}_y^{-1}(t)  \Big) \Big)\\  
& = & \lambda'_{x*y}  ~ \Big(^{(\phi, \theta)}\alpha_{x, y} \Big({\lambda'_x}^{-1}(s), ~~{\lambda'_y}^{-1}(t)  \Big) \Big),  
\end{eqnarray*}
and hence $[^{(\phi, \theta)}\alpha]=[^{(\phi, \theta)}\beta]$.
\end{proof}

\begin{remark}\label{abelian-remark}
Let $X$ be a quandle, $S=A$ a group (not necessarily abelian) and $\alpha: X \times X \to A$ a map. Then the map $\alpha': X \times X \to \Map(A \times A, A)$ given by $$\alpha'_{x, y}(s, t)= s ~\alpha_{x, y},$$ where $x, y \in X$ and $s, t \in A$, satisfy \eqref{dynamical-cocycle-condition1} and \eqref{dynamical-cocycle-condition3} if and only if $\alpha$ satisfy
\begin{equation}\label{group-coefficient-cocycle-condition}
\alpha_{x, y}~\alpha_{x*y, z}= \alpha_{x, z}~\alpha_{x* z, y*z}
\end{equation}
and 
\begin{equation}\label{normalised-cocycle-condition}
\alpha_{x, x}=1
\end{equation}
for $x, y, z \in X$. If $A$ is abelian, then \eqref{group-coefficient-cocycle-condition} is the cocycle condition defining the usual second quandle cohomology $\Ho^2(X, A)$ \cite{CJKLS2003}, which will be discussed in some detail in Section \ref{abelian-quandle-cohomology-section}.
\end{remark}
\medskip

\section{Exact sequence relating automorphisms and dynamical cohomology}\label{Exact sequence automorphisms dynamical cohomology}
Let $X$ be a quandle, $S$ a set, $\alpha\in \mathcal{Z}^2(X; S)$ and $E=X \times_\alpha S$ the extension of $X$ by $S$ using $\alpha$. For a fixed base point $x_0 \in X$, we set
$$\Aut^{x_0}(X)= \big\{\phi \in \Aut(X)~|~ \phi(x_0)=x_0 \big\},$$ 
and 
\begin{small}
$$\Aut^{x_0}_S(E)=\Big\{ \psi \in \Aut(E)~|~\psi(x, s)=\big(\phi(x),~ \tau(x, s)\big)~\textrm{for some}~\phi \in \Aut^{x_0}(X)~\textrm{and}~\tau \in \Map(X \times S, S) \Big\}.$$
\end{small}

We write $\tau(x, s)$ as $\tau_x(s)$ for $x \in X$ and $s \in S$.

\begin{lemma}\label{defining-theta}
The following hold:
\begin{enumerate}
\item If $\psi \in \Aut(E)$ such that $\psi(x, s)=\big(\phi(x), \tau_x(s)\big)$, then $\tau_x \in \Sigma_S$.
\item $\Aut^{x_0}_S(E)$ is a subgroup of $\Aut(E)$.
\end{enumerate}
\end{lemma}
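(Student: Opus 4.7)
The plan is to verify both parts directly from the definitions. The only substantive point is noticing how bijectivity of $\phi \in \Aut(X)$ interacts with the ``first coordinate of $\psi(x,s)$ depends only on $x$'' condition, both in showing each $\tau_x$ is a bijection and in closing $\Aut^{x_0}_S(E)$ under inverses.

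For part (1), fix $x \in X$. Injectivity of $\tau_x$ follows from injectivity of $\psi$ restricted to $\{x\}\times S$: if $\tau_x(s)=\tau_x(s')$ then $\psi(x,s)=\psi(x,s')$, forcing $s=s'$. For surjectivity, given $t\in S$, use surjectivity of $\psi$ to pick $(x',s')\in E$ with $\psi(x',s')=(\phi(x),t)$; then $\phi(x')=\phi(x)$, and injectivity of $\phi$ forces $x'=x$, so $\tau_x(s')=t$.

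For part (2), I would check the three subgroup axioms in turn. The identity $\id_E$ clearly lies in $\Aut^{x_0}_S(E)$ with $\phi=\id_X$ and $\tau_x=\id_S$. For closure under composition, compose $\psi_1,\psi_2 \in \Aut^{x_0}_S(E)$ with data $(\phi_i,\tau_{i,\cdot})$ and observe that
$$(\psi_1\circ\psi_2)(x,s) = \Big(\phi_1\phi_2(x),~\tau_{1,\phi_2(x)}\big(\tau_{2,x}(s)\big)\Big),$$
whose first coordinate is $\phi_1\phi_2(x)$, and $\phi_1\phi_2 \in \Aut^{x_0}(X)$ since both maps fix $x_0$. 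For closure under inverses, write down the candidate formula
$$\psi^{-1}(y,t) = \Big(\phi^{-1}(y),~\tau_{\phi^{-1}(y)}^{-1}(t)\Big),$$
using part (1) to invert each $\tau_x$ and verifying the identity $\psi\circ\psi^{-1}=\id_E$ by direct substitution.

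There is no real obstacle; the lemma is essentially bookkeeping. The conceptual content worth flagging is the role of $\phi$ being a genuine automorphism of $X$: without this, the second coordinate of $\psi^{-1}(y,t)$ could a priori fail to depend only on $y$, and $\Aut^{x_0}_S(E)$ would not be closed under inverses. This also explains why the projection $\psi\mapsto\phi$ will be a well-defined group homomorphism, playing the role of the map $\Phi$ in the forthcoming exact sequence.
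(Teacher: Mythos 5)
Your proof is correct and follows essentially the same route as the paper: the same explicit formula for the inverse, $\psi^{-1}(y,t)=\big(\phi^{-1}(y),\tau_{\phi^{-1}(y)}^{-1}(t)\big)$, and the same composition formula for closure. The only (harmless) difference is that you invoke the group structure of $\Aut(E)$ to avoid re-checking that the candidate inverse is a quandle homomorphism, a verification the paper carries out explicitly via the identity $\alpha_{\phi^{-1}(x),\phi^{-1}(y)}\big(\tau_{\phi^{-1}(x)}^{-1}(s),\tau_{\phi^{-1}(y)}^{-1}(t)\big)=\tau_{\phi^{-1}(x*y)}^{-1}\big(\alpha_{x,y}(s,t)\big)$; your observation that injectivity of $\phi$ is what makes $\tau_x$ surjective is also more careful than the paper's one-line justification of part (1).
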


\begin{proof}
Assertion (1) follows from bijectivity of $\psi$. For the second assertion, let $\psi \in \Aut^{x_0}_S(E)$ such that $\psi(x, s)=\big(\phi(x), \tau_x(s)\big)$ for all $x \in X$ and $s \in S$. Define $\varphi(x,s) := \big(\phi^{-1}(x), ~\tau_{\phi^{-1}(x)}^{-1}(s)\big)$. Since $\phi$ and $\tau_x$ are bijections for all $x \in X$, it follows that $\varphi$ is also a bijection. Since $\psi$ is a quandle homomorphism, for $x, y \in X$ and $s, t \in S$, we have
\begin{equation*}
\tau_{x*y}\big(\alpha_{x, y}(s, t)\big)=\alpha_{\phi(x), \phi(y)}\big(\tau_x(s),~ \tau_y(t)\big),
\end{equation*}
which is equivalent to
\begin{equation}\label{useful-in-subgroup}
\alpha_{\phi^{-1}(x), \phi^{-1}(y)} \big(\tau_{\phi^{-1}(x)}^{-1}(s),~ \tau_{\phi^{-1}(y)}^{-1}(t) \big)=\tau_{\phi^{-1}(x*y)}^{-1} \big(\alpha_{x, y}(s,t) \big).
\end{equation}

Now, consider
\begin{eqnarray*}
\varphi \big( (x, s)*(y, t) \big) &=& \varphi \big(x*y,~ \alpha_{x, y}(s, t) \big)\\
 &=&  \big(\phi^{-1}(x*y), ~\tau_{\phi^{-1}(x*y)}^{-1}\big(\alpha_{x, y}(s, t) \big)\big)\\
  &=&  \big(\phi^{-1}(x)*\phi^{-1}(y), ~\alpha_{\phi^{-1}(x), \phi^{-1}(y)} \big(\tau_{\phi^{-1}(x)}^{-1}(s),~ \tau_{\phi^{-1}(y)}^{-1}(t)\big)\big),~\textrm{by}~\eqref{useful-in-subgroup}\\
  &=&    \big(\phi^{-1}(x), ~\tau_{\phi^{-1}(x)}^{-1}(s)\big)* \big(\phi^{-1}(y), ~\tau_{\phi^{-1}(y)}^{-1}(t)\big)\\
  &=&  \varphi(x,s) * \varphi(y,t).
\end{eqnarray*}
Thus, $\varphi \in \Aut^{x_0}_S(E)$ and a direct check shows that it is the inverse of $\psi$. Further, given $\psi(x, s)=\big(\phi(x), ~\tau_x(s)\big)$ and $\psi'(x, s)=\big(\phi'(x),~ \tau'_x(s)\big)$, we see that
\begin{equation}\label{product-psi}
\psi \psi'(x, s)=\psi \big( \phi'(x), ~\tau'_x(s)\big)=\big(\phi\big( \phi'(x)\big), ~\tau_{\phi'(x)}\big(\tau'_x(s)\big) \big)=\big(\phi \phi'(x), ~\tau_{\phi'(x)}\tau'_x(s) \big),
\end{equation}
and hence $\Aut^{x_0}_S(E)$ is a subgroup of $\Aut(E)$.
\end{proof}
\medskip

Restricting the action of $\Aut(X) \times \Sigma_S$ to that of its subgroup $\Aut^{x_0}(X) \times \Sigma_S$, let $$\Omega_{[\alpha]}: \Aut^{x_0}(X) \times \Sigma_S \to \mathcal{H}^2(X; S)$$ be the orbit map given by 
\begin{equation}\label{wells-map}
\Omega_{[\alpha]}(\phi, \theta)= ~^{(\phi, \theta)}[\alpha].
\end{equation}
For simplicity of notation, we denote  $\Omega_{[\alpha]}$ by $\Omega$. Further, let $$\Phi: \Aut^{x_0}_S(E) \to \Aut^{x_0}(X) \times \Sigma_S$$ be the restriction map given by 
\begin{equation}\label{restriction-map}
\Phi(\psi)= ~(\phi, \theta),
\end{equation} 
where $\psi(x, s)=\big(\phi(x),~ \tau_x(s)\big)$ and $\theta(s):=\tau_{x_0}(s)$ for all $x \in X$ and $s \in S$.  Define a map $\lambda:X \to \Sigma_S$ by setting $$\lambda_x:= \theta\tau_{\phi^{-1}(x)}^{-1}.$$ 
Note that $\lambda_{x_0}=\id_S$ and $\tau_x$ can be uniquely written as 
 \begin{equation}\label{expression-tau}
 \tau_x=(\tau_x \theta^{-1}) \theta=\lambda_{\phi(x)}^{-1}\theta
\end{equation}
for all $x \in X$.

\begin{proposition}\label{image-rho}
The map $\Phi$ is a group homomorphism and $\img(\Phi)=\Omega^{-1}\big([\alpha]\big)$.
\end{proposition}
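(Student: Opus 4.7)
The plan is to split the assertion into three parts: $\Phi$ is a homomorphism, $\img(\Phi)\subseteq \Omega^{-1}([\alpha])$, and the reverse inclusion.

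For the homomorphism property, I take $\psi,\psi'\in \Aut^{x_0}_S(E)$ with $\psi(x,s)=(\phi(x),\tau_x(s))$ and $\psi'(x,s)=(\phi'(x),\tau'_x(s))$, so that $\Phi(\psi)=(\phi,\theta)$, $\Phi(\psi')=(\phi',\theta')$ with $\theta=\tau_{x_0}$, $\theta'=\tau'_{x_0}$. The composition formula \eqref{product-psi} gives $\psi\psi'(x,s)=(\phi\phi'(x),\tau_{\phi'(x)}\tau'_x(s))$, and since $\phi'\in \Aut^{x_0}(X)$ we have $\phi'(x_0)=x_0$, hence the second component at $x_0$ is $\tau_{x_0}\tau'_{x_0}=\theta\theta'$. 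Therefore $\Phi(\psi\psi')=(\phi\phi',\theta\theta')=\Phi(\psi)\Phi(\psi')$.

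For $\img(\Phi)\subseteq \Omega^{-1}([\alpha])$, the fact that $\psi$ is a quandle homomorphism on $E=X\times_\alpha S$ translates, via \eqref{dynamical-quandle-operation}, into
$$\tau_{x*y}\bigl(\alpha_{x,y}(s,t)\bigr)=\alpha_{\phi(x),\phi(y)}\bigl(\tau_x(s),\tau_y(t)\bigr).$$
Substituting the factorisation $\tau_x=\lambda_{\phi(x)}^{-1}\theta$ from \eqref{expression-tau}, applying $\lambda_{\phi(x*y)}$ on both sides, then setting $s\mapsto \theta^{-1}(s)$, $t\mapsto \theta^{-1}(t)$ and renaming $x,y$ as $\phi^{-1}(x),\phi^{-1}(y)$, the identity rearranges to
$$\theta\bigl(\alpha_{\phi^{-1}(x),\phi^{-1}(y)}(\theta^{-1}(s),\theta^{-1}(t))\bigr)=\lambda_{x*y}\bigl(\alpha_{x,y}(\lambda_x^{-1}(s),\lambda_y^{-1}(t))\bigr),$$
which by \eqref{auto-action} and \eqref{cohomologous} says exactly that $^{(\phi,\theta)}\alpha\sim\alpha$, so $(\phi,\theta)\in \Omega^{-1}([\alpha])$.

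For the reverse inclusion, I start from $(\phi,\theta)\in \Omega^{-1}([\alpha])$ with a cohomology witness $\lambda:X\to\Sigma_S$ which I shall arrange to satisfy $\lambda_{x_0}=\id_S$. Set $\tau_x:=\lambda_{\phi(x)}^{-1}\theta$ and $\psi(x,s):=(\phi(x),\tau_x(s))$. Reading the computation of the previous paragraph backwards shows that $\psi$ respects the operation on $E$, and bijectivity of $\psi$ follows from that of $\phi$ and of each $\tau_x\in\Sigma_S$; hence $\psi\in \Aut^{x_0}_S(E)$. The normalisation then gives $\tau_{x_0}=\lambda_{\phi(x_0)}^{-1}\theta=\lambda_{x_0}^{-1}\theta=\theta$, so $\Phi(\psi)=(\phi,\theta)$.

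The delicate point is the normalisation claim $\lambda_{x_0}=\id_S$ used in the final paragraph: replacing $\lambda$ by $c\lambda$ or $\lambda c$ for a constant $c\in \Sigma_S$ does not in general preserve \eqref{cohomologous} unless $c$ interacts trivially with the cocycle, so the argument must instead adjust the witness in an $x$-dependent way that fixes the base-point condition while preserving the relation. This normalisation step is where I expect the main technical difficulty to lie; the rest consists of mechanical manipulations of \eqref{auto-action}, \eqref{cohomologous} and \eqref{expression-tau}.
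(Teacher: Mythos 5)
Your first two steps (the homomorphism property and the inclusion $\img(\Phi)\subseteq\Omega^{-1}([\alpha])$) coincide with the paper's argument and are correct. The normalisation problem you flag at the end is a genuine gap, and you are right not to wave it away: given only that $(\phi,\theta)$ stabilises $[\alpha]$, the witness $\lambda$ in \eqref{cohomologous} is unconstrained at $x_0$, and the automorphism $\psi(x,s)=\big(\phi(x),\lambda_{\phi(x)}^{-1}\theta(s)\big)$ that you (and the paper) construct has $\tau_{x_0}=\lambda_{\phi(x_0)}^{-1}\theta=\lambda_{x_0}^{-1}\theta$, so that $\Phi(\psi)=(\phi,\lambda_{x_0}^{-1}\theta)$ rather than $(\phi,\theta)$. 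The paper's own proof simply asserts $\Phi(\psi)=(\phi,\theta)$ at this point without addressing the discrepancy, so you have not overlooked a device that the authors supply.

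Moreover, the gap cannot be closed by a cleverer choice of witness. The maps $\lambda:X\to\Sigma_S$ act on $\mathcal{Z}^2(X;S)$ via \eqref{cohomologous}, and the set of all witnesses for ${}^{(\phi,\theta)}\alpha\sim\alpha$ is a single coset of the stabiliser of $\alpha$ in $\Map(X,\Sigma_S)$; nothing forces that coset to contain an element with $\lambda_{x_0}=\id_S$. A concrete test case is $X=\{x_0\}$ a one-point quandle, so that $E\cong (S,*_{x_0})$ with $s*_{x_0}t=\alpha_{x_0,x_0}(s,t)$: every $(\id_X,\theta)$ with $\theta\in\Sigma_S$ lies in $\Omega^{-1}([\alpha])$, since $\lambda_{x_0}=\theta$ is a witness, whereas $\img(\Phi)$ is the automorphism group of the quandle $(S,*_{x_0})$, in general a proper subgroup of $\Sigma_S$. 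What your forward direction actually characterises is $\img(\Phi)=\{(\phi,\theta)\ :\ {}^{(\phi,\theta)}\alpha\sim\alpha\ \text{via some}\ \lambda\ \text{with}\ \lambda_{x_0}=\id_S\}$, because the $\lambda_x=\theta\tau_{\phi^{-1}(x)}^{-1}$ produced there automatically satisfies $\lambda_{x_0}=\id_S$; your reverse computation proves exactly the converse of this refined statement, and that is the form in which the equality holds. So the proposal is not complete as written, but the obstruction you isolated is real and is present in the paper's proof as well.
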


\begin{proof}
Let $\psi, \psi' \in \Aut^{x_0}_S(E)$. Then, by \eqref{product-psi}, we have $$\psi \psi'(x, s)=\big(\phi \phi'(x), ~\tau_{\phi'(x)}\tau'_x(s) \big),$$
where $\tau_{\phi'(x_0)}\tau'_{x_0}(s)=\tau_{x_0}\tau'_{x_0}(s)=\theta \theta'(s)$. This shows that $\Phi(\psi \psi' )=\Phi(\psi)\Phi(\psi' )$.
\par
Notice that $$\Omega^{-1}\big([\alpha]\big)= \big( \Aut^{x_0}(X) \times \Sigma_S \big)_{[\alpha]},$$ the stabiliser of the action of $\Aut^{x_0}(X) \times \Sigma_S$ at $[\alpha]$. Let $\psi \in \Aut^{x_0}_S(E)$ such that $\Phi (\psi)= (\phi, \theta)$. Since $\psi$ is a quandle homomorphism, for $x, y \in X$ and $s, t \in S$, we have
\begin{equation}\label{ext-homo-condition1}
\tau_{x*y}\big(\alpha_{x, y}(s, t)\big)=\alpha_{\phi(x), \phi(y)}\big(\tau_x(s),~ \tau_y(t)\big).
\end{equation}
Since $\phi \in \Aut^{x_0}(X)$ and $\theta \in \Sigma_S$, we can replace $x, y$ by $\phi^{-1}(x),\phi^{-1}(y)$ and $s, t$ by $\theta^{-1}(s),\theta^{-1}(t)$, respectively,  in \eqref{ext-homo-condition1}. This gives
\begin{equation}\label{ext-homo-condition2}
\tau_{\phi^{-1}(x*y)}\Big(\alpha_{\phi^{-1}(x), \phi^{-1}(y)}\big(\theta^{-1}(s), ~\theta^{-1}(t)\big)\Big)=\alpha_{x, y}\Big(\tau_{\phi^{-1}(x)}\big(\theta^{-1}(s)\big), ~\tau_{\phi^{-1}(y)}\big(\theta^{-1}(t)\big)\Big).
\end{equation}
Define a map $\lambda: X \to \Sigma_S$ by setting $\lambda_x:=\theta \tau^{-1}_{\phi^{-1}(x)}$. Then \eqref{ext-homo-condition2} takes the form
\begin{equation}\label{ext-homo-condition3}
\lambda_{x*y}^{-1} ~\theta \Big(\alpha_{\phi^{-1}(x), ~\phi^{-1}(y)}\big(\theta^{-1}(s), ~\theta^{-1}(t)\big)\Big)=\alpha_{x, y}\big(\lambda_x^{-1}(s), ~\lambda_y^{-1}(t) \big),
\end{equation}
which is further equivalent to
\begin{equation}\label{ext-homo-condition4}
^{(\phi, \theta)}\alpha_{x, y}(s, t)=\theta \Big(\alpha_{\phi^{-1}(x), ~\phi^{-1}(y)}\big(\theta^{-1}(s), ~\theta^{-1}(t)\big)\Big)= \lambda_{x*y} \Big(\alpha_{x, y}\big(\lambda_x^{-1}(s), ~\lambda_y^{-1}(t) \big) \Big).
\end{equation}
This shows that $\img(\Phi)\subseteq \big( \Aut^{x_0}(X) \times \Sigma_S \big)_{[\alpha]}$.
\par
Conversely, let $(\phi, \theta) \in \big( \Aut^{x_0}(X) \times \Sigma_S \big)_{[\alpha]}$, that is, there exists a map $\lambda: X \to \Sigma_S$ such that \eqref{ext-homo-condition4} holds. Define $\psi: E \to E$ by setting
$$\psi(x, s)= \big(\phi(x),~ \lambda_{\phi(x)}^{-1} \theta (s)\big).$$
Clearly, $\psi$ is bijective since $\phi$ and $\theta$ are so. Replacing $x, y$ by $\phi(x),\phi(y)$ and $s, t$ by $\theta(s),\theta(t)$, respectively, in \eqref{ext-homo-condition4} gives 
\begin{equation}\label{ext-homo-condition5}
\lambda_{\phi(x)*\phi(y)}^{-1}~\theta \big(\alpha_{x, y}(s, ~t)\big)=  \alpha_{\phi(x), \phi(y)}\big(\lambda_{\phi(x)}^{-1}\theta(s), ~\lambda_{\phi(y)}^{-1}\theta(t) \big).
\end{equation}
Now, we compute
\begin{eqnarray*}
\psi \big((x, s)*(y, t) \big) & = & \psi \big(x*y, ~\alpha_{x, y}(s, t) \big) \\
&  =&  \Big(\phi(x)*\phi(y), ~\lambda_{\phi(x)*\phi(y)}^{-1} \theta \big(\alpha_{x, y}(s, t)\big) \Big) \\
&  =&  \Big(\phi(x)*\phi(y), ~\alpha_{\phi(x), \phi(y)}\big(\lambda_{\phi(x)}^{-1}\theta(s), ~\lambda_{\phi(y)}^{-1}\theta(t) \big) \Big),~ \textrm{by}~\eqref{ext-homo-condition5}\\
&  =&  \big(\phi(x), ~\lambda_{\phi(x)}^{-1}\theta(s) \big)* \big(\phi(y), ~\lambda_{\phi(y)}^{-1}\theta(t) \big)\\ 
&  =& \psi (x, s)*\psi (y, t).
\end{eqnarray*}
Thus, $\psi \in\Aut^{x_0}_S(E)$ and $\Phi(\psi)=(\phi, \theta)$, which completes the proof.
\end{proof}

\begin{proposition}\label{kernel-rho}
$\Ker(\Phi) \cong \Big\{\lambda: X \to \Sigma_S~|~\lambda_{x_0}=\id_S~\textrm{and}~\alpha_{x, y}(s, t)=\lambda_{x*y} \big(\alpha_{x, y}\big(\lambda_x^{-1}(s),\lambda_y^{-1}(t)\big)\big) \Big\}$.
\end{proposition}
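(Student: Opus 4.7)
The plan is to produce an explicit bijection between $\Ker(\Phi)$ and the given set of maps $\lambda : X \to \Sigma_S$, by exploiting the formula \eqref{expression-tau} that already expresses the ``twist'' $\tau_x$ of an arbitrary element of $\Aut^{x_0}_S(E)$ in terms of a map $\lambda$. Concretely, I first unpack what it means to lie in $\Ker(\Phi)$. By the definition \eqref{restriction-map} of $\Phi$, an element $\psi \in \Aut^{x_0}_S(E)$ lies in $\Ker(\Phi)$ if and only if $\phi=\id_X$ and $\theta=\tau_{x_0}=\id_S$. Thus every $\psi \in \Ker(\Phi)$ has the form $\psi(x,s)=\big(x,\tau_x(s)\big)$ with $\tau_x \in \Sigma_S$ for each $x \in X$ (by Lemma \ref{defining-theta}(1)) and with $\tau_{x_0}=\id_S$.

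Next I define the correspondence. Specialising \eqref{expression-tau} to $\phi=\id_X$ and $\theta=\id_S$ gives $\lambda_x=\tau_x^{-1}$; I therefore set
\begin{equation*}
\Ker(\Phi)\longrightarrow \Big\{\lambda:X\to \Sigma_S \;\big|\; \lambda_{x_0}=\id_S \text{ and the cocycle condition holds}\Big\},\qquad \psi\longmapsto \lambda,
\end{equation*}
where $\lambda_x:=\tau_x^{-1}$. The condition $\lambda_{x_0}=\id_S$ is automatic from $\tau_{x_0}=\id_S$. To check that the cocycle condition holds, I apply the quandle-homomorphism identity for $\psi$, which in this case reduces to
\begin{equation*}
\tau_{x*y}\big(\alpha_{x,y}(s,t)\big)=\alpha_{x,y}\big(\tau_x(s),\tau_y(t)\big),
\end{equation*}
exactly as in \eqref{ext-homo-condition1} with $\phi=\id_X$. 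Substituting $\tau_x=\lambda_x^{-1}$ and applying $\lambda_{x*y}$ to both sides yields precisely
\begin{equation*}
\alpha_{x,y}(s,t)=\lambda_{x*y}\Big(\alpha_{x,y}\big(\lambda_x^{-1}(s),\lambda_y^{-1}(t)\big)\Big),
\end{equation*}
so the map is well-defined.

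For the inverse, given such a $\lambda$ I set $\psi(x,s):=\big(x,\lambda_x^{-1}(s)\big)$. Bijectivity of $\psi$ is immediate from the bijectivity of each $\lambda_x$, and reversing the computation above shows that the cocycle condition on $\lambda$ is exactly what is needed for $\psi$ to respect the quandle operation \eqref{dynamical-quandle-operation}; hence $\psi\in \Aut^{x_0}_S(E)$. Since the first coordinate of $\psi$ is the identity and $\tau_{x_0}=\lambda_{x_0}^{-1}=\id_S$, we get $\Phi(\psi)=(\id_X,\id_S)$, so $\psi\in \Ker(\Phi)$. The two constructions $\psi\mapsto\lambda$ and $\lambda\mapsto\psi$ are manifestly mutually inverse, which gives the bijection. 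Finally, to promote this to a group isomorphism one endows the right-hand set with the operation transported through the bijection: using \eqref{product-psi} with $\phi=\phi'=\id_X$, the composition $\psi\psi'$ corresponds to the map $x\mapsto \lambda'_x\lambda_x$, i.e.\ the reversed pointwise composition of the $\lambda$'s.

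The computations are essentially routine; the only mildly delicate step is to verify, without sign or ordering errors, that inverting $\tau_x=\lambda_x^{-1}$ in the homomorphism identity for $\psi$ reproduces exactly the stated cocycle condition on $\lambda$. Everything else follows from the formal manipulations already carried out in Proposition \ref{image-rho}.
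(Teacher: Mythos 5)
Your proposal is correct and follows essentially the same route as the paper: identify elements of $\Ker(\Phi)$ as $\psi(x,s)=\big(x,\lambda_x^{-1}(s)\big)$ via \eqref{expression-tau}, translate the homomorphism identity \eqref{ext-homo-condition1} into the stated condition on $\lambda$, and transport the group structure using \eqref{product-psi}. Your remark that the transported operation is the reversed pointwise composition $\lambda'_x\lambda_x$ is in fact slightly more careful than the paper's statement of the operation, and does not affect the isomorphism claimed.
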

\begin{proof}
Note that $\psi \in \Ker(\Phi)$ if and only if $\Phi(\psi)=(\id_X, \id_S)$. In view of \eqref{expression-tau}, we have $$\psi(x, s)=\big(x, \lambda_x^{-1}(s)\big),$$ and $\lambda_{x_0}=\id_S$. Further, by \eqref{ext-homo-condition1}, $\psi$ is a quandle homomorphism if and only if $$\alpha_{x, y}(s, t)=\lambda_{x*y} \big(\alpha_{x, y}\big(\lambda_x^{-1}(s),\lambda_y^{-1}(t)\big)\big)$$ for all $x, y \in X$ and $s,t \in S$. Thus, $\Ker(\Phi)$ is the desired set via the map $\psi \mapsto \lambda$. Here, in view of \eqref{product-psi}, the right hand side is a group with operation $(\lambda\lambda')_x :=\lambda_x \lambda'_x$ for all $x \in X$.
\end{proof}

Combining propositions \ref{image-rho} and \ref{kernel-rho}, we obtain our first main result.

\medskip

\begin{theorem}\label{main-thm-1}
Let $X$ be a quandle, $x_0 \in X$, $S$ a set, $\alpha\in \mathcal{Z}^2(X; S)$ and $E=X \times_\alpha S$ the extension of $X$ by $S$ using $\alpha$. Then there exists an exact sequence
\begin{equation}\label{well-sequence}
1 \longrightarrow \Ker(\Phi) \longrightarrow \Aut^{x_0}_S(E) \stackrel{\Phi}{\longrightarrow} \Aut^{x_0}(X) \times \Sigma_S \stackrel{\Omega}{\longrightarrow} \mathcal{H}^2(X; S),
\end{equation}
where exactness at $ \Aut^{x_0}(X) \times \Sigma_S$ means that $\img(\Phi)=\Omega^{-1}\big([\alpha]\big)$.
\end{theorem}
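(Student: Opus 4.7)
The plan is to assemble the sequence from the two preceding propositions, which together carry essentially all of the technical content. First I would observe that the map $\Phi\colon \Aut^{x_0}_S(E) \to \Aut^{x_0}(X)\times \Sigma_S$ of \eqref{restriction-map} is well defined (using Lemma \ref{defining-theta}(1) to see that $\tau_{x_0}\in\Sigma_S$) and is a group homomorphism; this is exactly the first half of Proposition \ref{image-rho}. Similarly, $\Omega$ is a well defined set-theoretic map by Proposition \ref{action-auto-on-dynamical-classes} (it is the orbit map, not required to be a homomorphism).

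Next I would check exactness at each remaining term. At $\Aut^{x_0}_S(E)$, the image of the inclusion $\Ker(\Phi)\hookrightarrow \Aut^{x_0}_S(E)$ is $\Ker(\Phi)$ by definition, so this is tautological; the explicit description of this kernel given in Proposition \ref{kernel-rho} is not strictly needed here, but provides the promised concrete picture of the first term. At $\Aut^{x_0}(X)\times\Sigma_S$, exactness in the stated sense is precisely the equality $\img(\Phi)=\Omega^{-1}([\alpha])$ established in Proposition \ref{image-rho}.

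Thus the theorem is essentially a packaging of Propositions \ref{image-rho} and \ref{kernel-rho}, and my proof would consist of little more than citing them and verifying that the composite $\Omega\circ\Phi$ lands on the single cohomology class $[\alpha]$ (this is what the stabiliser statement amounts to, since $\Omega^{-1}([\alpha])$ is exactly the stabiliser of $[\alpha]$ under the action of Proposition \ref{action-auto-on-dynamical-classes}). There is no real obstacle at this stage; all the work — showing that extensions correspond to cocycles up to coboundary, setting up the action, and checking that a lift $\psi$ of $(\phi,\theta)$ exists iff $(\phi,\theta)$ fixes $[\alpha]$ — has already been done in Propositions \ref{image-rho} and \ref{kernel-rho}. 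The only subtlety worth flagging in the write-up is that exactness at $\Aut^{x_0}(X)\times\Sigma_S$ is understood not in the usual kernel-image sense (since $\Omega$ need not be a homomorphism and $\mathcal{H}^2(X;S)$ has no group structure), but in the sense of a pointed-set exact sequence with distinguished point $[\alpha]$.
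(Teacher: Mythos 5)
Your proposal is correct and matches the paper exactly: the authors likewise obtain Theorem \ref{main-thm-1} by simply combining Propositions \ref{image-rho} and \ref{kernel-rho}, with exactness at $\Aut^{x_0}(X)\times\Sigma_S$ interpreted as the stabiliser identity $\img(\Phi)=\Omega^{-1}([\alpha])$. Your remark that $\Omega$ is only an orbit map (so the sequence is exact in the pointed-set sense at that term) is consistent with how the paper states the result.
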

\medskip

The preceding theorem can be thought of as a quandle analogue of a similar result of Wells for groups \cite{MR0272898}. As a consequence, we obtain the following short exact sequence.

\begin{corollary}
Let $X$ be a quandle, $x_0 \in X$, $S$ a set, $\alpha\in \mathcal{Z}^2(X; S)$ and $E=X \times_\alpha S$ the extension of $X$ by $S$ using $\alpha$. Then there exists a short exact sequence of groups
\begin{equation}\label{der-aut-short-exact}
1 \longrightarrow \Ker(\Phi) \longrightarrow \Aut^{x_0}_S(E) \stackrel{\Phi}{\longrightarrow} \big(\Aut^{x_0}(X) \times \Sigma_S\big)_{[\alpha]} \longrightarrow 1.
\end{equation}
\end{corollary}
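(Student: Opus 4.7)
The plan is to read the corollary off directly from Theorem \ref{main-thm-1} by co-restricting the homomorphism $\Phi$ to its image. Since the previous theorem has already done all of the real work, essentially no new content is required; the task is to reformulate the single exactness condition at $\Aut^{x_0}(X) \times \Sigma_S$ as a surjectivity statement onto an honest subgroup.

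First I would recall that, by the very definition \eqref{wells-map} of $\Omega$ as an orbit map of the action in Proposition \ref{action-auto-on-dynamical-classes}, the preimage $\Omega^{-1}\bigl([\alpha]\bigr)$ is precisely the stabiliser
\[
\bigl(\Aut^{x_0}(X) \times \Sigma_S\bigr)_{[\alpha]} = \Bigl\{ (\phi,\theta) \in \Aut^{x_0}(X) \times \Sigma_S \;\Big|\; {}^{(\phi,\theta)}[\alpha] = [\alpha] \Bigr\}.
\]
This identification was already observed and exploited inside the proof of Proposition \ref{image-rho}, so no new verification is needed. In particular, being a stabiliser under a group action, this set is a subgroup of $\Aut^{x_0}(X) \times \Sigma_S$, which is what makes the target of the short exact sequence a group in the first place.

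Next, by the exactness statement of Theorem \ref{main-thm-1} at $\Aut^{x_0}(X) \times \Sigma_S$, we have
\[
\img(\Phi) = \Omega^{-1}\bigl([\alpha]\bigr) = \bigl(\Aut^{x_0}(X) \times \Sigma_S\bigr)_{[\alpha]}.
\]
Since $\Phi$ is a group homomorphism (Proposition \ref{image-rho}), co-restricting its codomain to this image yields a surjective group homomorphism $\Phi \colon \Aut^{x_0}_S(E) \twoheadrightarrow \bigl(\Aut^{x_0}(X) \times \Sigma_S\bigr)_{[\alpha]}$ with the same kernel $\Ker(\Phi)$ as in \eqref{well-sequence}. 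This gives the desired short exact sequence \eqref{der-aut-short-exact}.

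There is really no obstacle: the only point that could trip one up is making sure that $\Omega^{-1}\bigl([\alpha]\bigr)$ is recognised as a subgroup (and not merely a subset), which is immediate because it is a stabiliser of the group action constructed in Proposition \ref{action-auto-on-dynamical-classes}. Everything else follows tautologically from Theorem \ref{main-thm-1}.
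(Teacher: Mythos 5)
Your proposal is correct and follows exactly the route the paper intends: the paper states this corollary as an immediate consequence of Theorem \ref{main-thm-1}, using the identification $\img(\Phi)=\Omega^{-1}\bigl([\alpha]\bigr)=\bigl(\Aut^{x_0}(X)\times\Sigma_S\bigr)_{[\alpha]}$ already established in Proposition \ref{image-rho} and co-restricting $\Phi$ to its image. Your added remark that the stabiliser is a subgroup (so the target is a group) is the only point needing mention, and you handle it correctly.
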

\medskip

It is interesting to find conditions for the splitting of the short exact sequence  \eqref{der-aut-short-exact}.

\begin{proposition}
Let $X$ and $S$ be quandles and $x_0 \in X$. If $\alpha \in \mathcal{Z}^2(X; S)$ is the cocycle given by $\alpha_{x, y}(s, t)= s*t$ for all $x, y \in X$ and $s, t \in S$, then the short exact  sequence 
\begin{equation}\label{der-aut-short-exact2}
1 \longrightarrow \Ker(\Phi) \longrightarrow \Aut^{x_0}_S(E) \stackrel{\Phi}{\longrightarrow} \big(\Aut^{x_0}(X) \times \Aut(S)\big)_{[\alpha]} \longrightarrow 1
\end{equation}
 splits.
\end{proposition}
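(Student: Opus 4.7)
The plan is to construct an explicit splitting $\sigma : \Aut^{x_0}(X) \times \Aut(S) \to \Aut^{x_0}_S(E)$ by the simple product formula
\[
\sigma(\phi, \theta)(x, s) = \big(\phi(x),~ \theta(s)\big),
\]
and then verify that $\Phi \circ \sigma = \id$ and $\sigma$ is a group homomorphism. First I would sanity-check that $\alpha_{x,y}(s,t) = s*t$ really is a dynamical 2-cocycle in the sense of \eqref{dynamical-cocycle-condition1}--\eqref{dynamical-cocycle-condition3}: condition \eqref{dynamical-cocycle-condition1} is (Q1) for $S$, condition \eqref{dynamical-cocycle-condition2} is (Q2) for $S$, and condition \eqref{dynamical-cocycle-condition3} reduces to $(s*t)*u = (s*u)*(t*u)$, which is (Q3) for $S$. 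Note also that for this particular $\alpha$ the quandle $E = X \times_{\alpha} S$ is just the direct product quandle $X \times S$.

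Next I would observe that for every $(\phi, \theta) \in \Aut^{x_0}(X) \times \Aut(S)$, the action formula \eqref{auto-action} gives
\[
^{(\phi, \theta)}\alpha_{x, y}(s, t) = \theta\big(\theta^{-1}(s) * \theta^{-1}(t)\big) = s * t = \alpha_{x,y}(s,t),
\]
precisely because $\theta$ is a quandle automorphism of $S$. Hence $\Aut^{x_0}(X) \times \Aut(S)$ stabilises $[\alpha]$ on the nose, so $\big(\Aut^{x_0}(X) \times \Aut(S)\big)_{[\alpha]} = \Aut^{x_0}(X) \times \Aut(S)$, and the short exact sequence \eqref{der-aut-short-exact2} really does involve the full product group.

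Then I would verify the four properties of $\sigma$: (i) $\sigma(\phi, \theta)$ is a quandle automorphism of $E$, which is immediate from the product structure since $\phi(x*y) = \phi(x)*\phi(y)$ and $\theta(s*t) = \theta(s)*\theta(t)$; (ii) $\sigma(\phi, \theta) \in \Aut^{x_0}_S(E)$, because taking $\tau_x := \theta$ (constant in $x$) we have $\sigma(\phi, \theta)(x,s) = (\phi(x), \tau_x(s))$ with $\phi \in \Aut^{x_0}(X)$; (iii) $\Phi(\sigma(\phi, \theta)) = (\phi, \tau_{x_0}) = (\phi, \theta)$, so $\Phi \circ \sigma = \id$; and (iv) $\sigma$ is a group homomorphism, which follows directly from the composition formula \eqref{product-psi}: since $\tau_x$ and $\tau'_x$ do not depend on $x$, the factor $\tau_{\phi'(x)}\tau'_x$ collapses to $\theta\theta'$.

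There is really no substantial obstacle here; the key conceptual point is the one highlighted in the second paragraph, namely that restricting the $\Sigma_S$-factor to $\Aut(S)$ forces the action on $[\alpha]$ to be trivial for this particular cocycle, which is exactly why the splitting can be chosen to be a simple direct product of automorphisms acting coordinate-wise on $E = X \times S$.
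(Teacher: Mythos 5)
Your proposal is correct and follows essentially the same route as the paper: both identify $E$ as the product quandle, compute that $^{(\phi,\theta)}\alpha = \alpha$ so the stabiliser is all of $\Aut^{x_0}(X)\times\Aut(S)$, and define the splitting by the coordinate-wise formula $(x,s)\mapsto(\phi(x),\theta(s))$. No gaps.
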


\begin{proof}
Note that for the given $\alpha$, the quandle operation \eqref{genralised-quandle-operation} on $E=X \times_\alpha S$ becomes $$(x, s)* (y,t)= (x* y, s* t ),$$ which is just the product quandle structure. Also, a direct check shows that, in this case, 
$$\Ker(\Phi) \cong \Big\{\lambda: X \to \Aut(S)~|~\lambda_{x_0}=\id_S~\textrm{and}~\lambda_{x*y}(s*t)=\lambda_x(s) * \lambda_y(t)\Big\}$$
and
$$\Aut^{x_0}_S(E)=\Big\{ \psi \in \Aut(E)~|~\psi(x, s)=\big(\phi(x),~ \tau_x(s)\big)~\textrm{for some}~\phi \in \Aut^{x_0}(X)~\textrm{and}~\tau: X \to \Aut(S) \Big\}.$$
Now, for any $(\phi, \theta) \in \Aut^{x_0}(X) \times \Aut(S)$, we have
$$^{(\phi, \theta)}\alpha_{x, y}(s, t)= \theta \Big(\alpha_{\phi^{-1}(x), \phi^{-1}(y)}\big(\theta^{-1}(s), ~\theta^{-1}(t)\big)  \Big)=\theta \big(\theta^{-1}(s)*\theta^{-1}(t)\big)= s*t=\alpha_{x, y}(s, t)$$
for $x, y \in X$ and $s, t \in S$. Thus, $\big(\Aut^{x_0}(X) \times \Aut(S) \big)_{[\alpha]}=\Aut^{x_0}(X) \times \Aut(S)$. Define 
$$\zeta:  \Aut^{x_0}(X) \times \Aut(S) \to \Aut^{x_0}_S(E)$$ by setting
$\zeta(\phi, \theta)=\psi$, where $\psi(x, s)= \big(\phi(x), \theta(s)\big)$ for $x \in X$ and $s \in S$. Then $\zeta$ is a group homomorphism with $\Phi \circ \zeta$ being the identity map, and sequence \eqref{der-aut-short-exact2} splits.
\end{proof}

\begin{remark}
We believe that an analogue of Theorem \ref{main-thm-1} should be provable for the general cohomology theory for quandles developed in \cite{Andruskiewitsch2003}. 
\end{remark}
\medskip

\section{Exact sequence relating automorphisms and second quandle cohomology}\label{abelian-quandle-cohomology-section}
Specialising to the usual quandle cohomology with coefficients in an abelian group, after some appropriate modifications, the sequence \eqref{well-sequence} takes a simpler and elegant form, which we derive in this section. We briefly recall the cohomology theory for quandles as in \cite{CJKLS2003}. Let $X$ be a quandle and $A$ an abelian group. Let $S_n(X)$ be the free abelian group generated by $n$-tuples of elements of $X$. Define a homomorphism
$\partial_{n}: S_{n}(X) \to S_{n-1}(X)$ by 

\begin{eqnarray*}
\partial_{n}(x_1, x_2, \dots, x_n)  & = & \sum_{i=2}^{n} (-1)^{i} \big( (x_1, x_2, \dots, x_{i-1}, x_{i+1},\dots,  x_n)\\
& & - (x_1 \ast x_i, x_2 \ast x_i, \dots, x_{i-1}\ast x_i, x_{i+1},x_{i+2},  \dots, x_n) \big)
\end{eqnarray*}
for $n \geq 2$ and $\partial_n=0$ for $n \leq 1$. Then $\{S_n(X), \partial_n \}$ forms a chain complex. For $n \geq 2$, let $D_n(X)$ be the free abelian subgroup of $S_n(X)$ generated by $n$-tuples $(x_1, \dots, x_n)$ with $x_{i}=x_{i+1}$ for some $1 \le i \le n-1$. Set $D_n(X)=0$   for $n \leq 1$. It can be checked that if $X$ is a quandle, then $\partial_n\big(D_n(X)\big) \subseteq D_{n-1}(X)$. Setting $C_n(X) = S_n(X)/ D_n(X)$, we see that $\{ C_n(X), \partial_n \}$ forms a chain complex,  where by abuse of notation, $\partial_n$ also denotes the induced homomorphism. 
\medskip

Define $C^n(X;A) := \Hom \big(C_n(X), A\big)$ and $\delta^n :C^n(X;A) \to C^{n+1}(X;A)$ given by
$$\delta^n(f)= (-1)^nf \circ \partial_{n+1}.$$ 
This turns $\{C^n(X;A), \delta^n \}$ into a cochain complex, and its $n$-th cohomology group $\Ho^n(X;A)$ is called the $n$-th quandle cohomology group of  $X$ with coefficients in $A$. We denote the group of cocycles and the group of coboundaries by  $\Z^n(X;A)$ and $\B^n(X;A)$, respectively.
\medskip

As in Remark \ref{abelian-remark}, $\Z^2(X;A)$ can be identified with the set of maps $\alpha: X \times X \to A$ satisfying the 2-cocycle condition \eqref{group-coefficient-cocycle-condition} and \eqref{normalised-cocycle-condition}, that is, 
$$\alpha_{x, y}~\alpha_{x*y, z}= \alpha_{x, z}~\alpha_{x* z, y*z}$$
and $$\alpha_{x, x}=1$$
for $x, y, z \in X$.  Further, two 2-cocycles $\alpha, \beta \in \Z^2(X;A)$ are cohomologous if there exists a 1-cochain $\lambda \in C^1(X;A)$ such that $\beta ~\delta^1 (\lambda)= \alpha $, that is, $$\beta_{x, y}=\lambda_{x*y} \lambda_x^{-1}\alpha_{x, y}$$ for $x, y \in X$.
\medskip

For $(\phi, \theta) \in \Aut(X) \times \Aut(A)$  and $\alpha \in \Z^2(X;A)$, we define
\begin{equation}\label{abelian-action}
^{(\phi, \theta)}\alpha_{x, y}:= \theta \big(\alpha_{\phi^{-1}(x), \phi^{-1}(y)}  \big)
\end{equation}
for $x,y \in X$. 

\begin{proposition}\label{aut-action-by-auto-coho}
The group $\Aut(X) \times \Aut(A)$ acts by automorphisms on $\Ho^2(X; A)$ as
$$^{(\phi, \theta)}[\alpha] := [^{(\phi, \theta)}\alpha]$$ 
for $(\phi, \theta) \in \Aut(X) \times \Aut(A)$ and $[\alpha]\in \Ho^2(X; A)$.
\end{proposition}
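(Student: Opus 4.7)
The plan is to mirror the dynamical cocycle argument from Lemma \ref{aut-action-cocycles} and Proposition \ref{action-auto-on-dynamical-classes}, but in the abelian setting where everything simplifies considerably. First I would check that the formula \eqref{abelian-action} preserves the set $\Z^2(X;A)$. Normalisation is immediate: $^{(\phi, \theta)}\alpha_{x, x} = \theta(\alpha_{\phi^{-1}(x), \phi^{-1}(x)}) = \theta(1) = 1$. For the 2-cocycle identity \eqref{group-coefficient-cocycle-condition}, I would apply $\theta$ to the identity satisfied by $\alpha$ at the triple $(\phi^{-1}(x), \phi^{-1}(y), \phi^{-1}(z))$, using that $\phi \in \Aut(X)$ commutes with $*$ (so $\phi^{-1}(x*y) = \phi^{-1}(x)*\phi^{-1}(y)$) and that $\theta$ is a group homomorphism (so it distributes over the product).

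Second, I would show that cohomologous cocycles are sent to cohomologous cocycles. If $\alpha, \beta \in \Z^2(X;A)$ satisfy $\beta_{x, y} = \lambda_{x*y} \lambda_x^{-1} \alpha_{x, y}$ for some $\lambda \in C^1(X;A)$, I set $\lambda'_x := \theta(\lambda_{\phi^{-1}(x)})$ and verify by direct substitution that
\begin{equation*}
^{(\phi, \theta)}\beta_{x, y} = \theta(\lambda_{\phi^{-1}(x*y)})~\theta(\lambda_{\phi^{-1}(x)})^{-1}~\theta(\alpha_{\phi^{-1}(x), \phi^{-1}(y)}) = \lambda'_{x*y}~(\lambda'_x)^{-1}~ ^{(\phi, \theta)}\alpha_{x, y},
\end{equation*}
so $^{(\phi, \theta)}\alpha$ and $^{(\phi, \theta)}\beta$ define the same class in $\Ho^2(X;A)$. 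Hence the operation $[\alpha] \mapsto ~^{(\phi, \theta)}[\alpha]$ is well defined.

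Third, functoriality of the assignment — that $^{(\id_X, \id_A)}[\alpha] = [\alpha]$ and $^{(\phi_1 \phi_2, \theta_1 \theta_2)}[\alpha] = ~^{(\phi_1, \theta_1)}\bigl( ^{(\phi_2, \theta_2)}[\alpha]\bigr)$ — follows from the relations $(\phi_1 \phi_2)^{-1} = \phi_2^{-1} \phi_1^{-1}$ and $(\theta_1 \theta_2)^{-1} = \theta_2^{-1} \theta_1^{-1}$ exactly as in the proof of Lemma \ref{aut-action-cocycles}, only cleaner since there is no second argument $S$ to juggle. This establishes a left action.

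Finally, to upgrade the conclusion from \emph{action} to \emph{action by automorphisms}, I use that $A$ is abelian, so $\Ho^2(X;A)$ is itself an abelian group under pointwise multiplication of cocycle representatives. Since $\theta \in \Aut(A)$ is a group homomorphism,
\begin{equation*}
^{(\phi, \theta)}(\alpha \beta)_{x, y} = \theta\bigl(\alpha_{\phi^{-1}(x), \phi^{-1}(y)}\, \beta_{\phi^{-1}(x), \phi^{-1}(y)}\bigr) = ~^{(\phi, \theta)}\alpha_{x, y}~^{(\phi, \theta)}\beta_{x, y},
\end{equation*}
so each $(\phi, \theta)$ acts by a group homomorphism on $\Ho^2(X;A)$; bijectivity is automatic since $(\phi^{-1}, \theta^{-1})$ acts as its inverse. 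No step presents a genuine obstacle — the whole argument is bookkeeping, and the only thing to be careful about is keeping track of where $\phi^{-1}$ versus $\phi$ appears so that composition is covariant rather than contravariant.
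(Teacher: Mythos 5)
Your proposal is correct and follows essentially the same route as the paper: verify that \eqref{abelian-action} preserves $\Z^2(X;A)$ and defines a left action (as in Lemma \ref{aut-action-cocycles}), use $\theta \in \Aut(A)$ to see that each $(\phi,\theta)$ acts by a group homomorphism, and transport the coboundary via $\lambda'_x := \theta(\lambda_{\phi^{-1}(x)})$ to get well-definedness on cohomology classes. The only cosmetic difference is that you spell out the normalisation and cocycle checks that the paper delegates to the earlier lemma; the substance is identical.
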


\begin{proof}
That  \eqref{abelian-action} defines an action of $\Aut(X) \times \Aut(A)$ on $\Z^2(X;A)$ is easy and follows along the lines of proof of Lemma \ref{aut-action-cocycles}. Let $(\phi, \theta) \in \Aut(X) \times \Aut(A)$  and $\alpha, \beta \in \Z^2(X;A)$. Then, for $x, y \in X$, we have
\begin{eqnarray*}
^{(\phi, \theta)}(\alpha \beta)_{x, y} &= & \theta \big({(\alpha\beta)}_{\phi^{-1}(x), \phi^{-1}(y)}  \big)\\
&= & \theta \big(\alpha_{\phi^{-1}(x), \phi^{-1}(y)}~\beta_{\phi^{-1}(x), \phi^{-1}(y)}  \big)\\
&= & \theta \big(\alpha_{\phi^{-1}(x), \phi^{-1}(y)}\big) ~ \theta\big(\beta_{\phi^{-1}(x), \phi^{-1}(y)}  \big), ~\textrm{since}~\theta \in \Aut(A)\\
&= &  ^{(\phi, \theta)}\alpha_{x, y} ~^{(\phi, \theta)}\beta_{x, y}\\
&= &  (^{(\phi, \theta)}\alpha ~^{(\phi, \theta)}\beta)_{x, y},
\end{eqnarray*}
and hence the action is by automorphisms. Let $\alpha,\beta \in \Z^2(X;A)$ be two cohomologous quandle 2-cocycles. Then there exists a map $\lambda:X \to A$ such that $$\beta_{x, y}=\alpha_{x, y}~\lambda_x~ \lambda_{x*y}^{-1}.$$
Setting $\lambda'_x:=\theta (\lambda_{\phi^{-1}(x)})$, we see that
\begin{eqnarray*}
^{(\phi, \theta)}\beta_{x, y} &=& \theta \big(\beta_{\phi^{-1}(x), \phi^{-1}(y)}  \big)\\
 &=& \theta \big(\alpha_{\phi^{-1}(x), \phi^{-1}(y)} \lambda_{\phi^{-1}(x)} \lambda_{\phi^{-1}(x*y)}^{-1} \big)\\
 &=& ^{(\phi, \theta)}\alpha _{x, y} ~\theta \big(\lambda_{\phi^{-1}(x)} \big) \theta \big(\lambda_{\phi^{-1}(x*y)}^{-1} \big)\\
 &=& ^{(\phi, \theta)}\alpha _{x, y} ~ \lambda'_x ~{\lambda'_{x*y}}^{-1}.
\end{eqnarray*}
Thus, $\Aut(X) \times \Aut(A)$ acts by automorphisms on  $\Ho^2(X; A)$.
\end{proof}

Applying the orbit-stabiliser theorem to the action of $\Aut(X) \times \Aut(A)$ on $\Ho^2(X; A)$ yields the following bound on the size of second quandle cohomology.

\begin{corollary}
If $X$ is a finite quandle, $A$ a finite abelian group and $\alpha: X\times X \to A$ a quandle 2-cocycle, then
$$|\Ho^2(X; A)| \ge \frac{|\Aut(X) \times \Aut(A)|}{|\big(\Aut(X) \times \Aut(A)\big)_{[\alpha]}|},$$
where $\big(\Aut(X) \times \Aut(A)\big)_{[\alpha]}$ is the stabiliser of $\Aut(X) \times \Aut(A)$ at $[\alpha]$.
\end{corollary}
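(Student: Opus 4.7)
The plan is to invoke the orbit--stabiliser theorem directly, using the action of $\Aut(X) \times \Aut(A)$ on $\Ho^2(X;A)$ already established in Proposition \ref{aut-action-by-auto-coho}. Since everything in sight is finite, the argument is essentially a one-liner once one identifies the correct group action.

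More precisely, I would first observe that by Proposition \ref{aut-action-by-auto-coho} the group $\Aut(X) \times \Aut(A)$ acts (by automorphisms, though only the underlying set action is needed here) on the finite set $\Ho^2(X;A)$. The orbit $\mathcal{O}_{[\alpha]}$ of the given class $[\alpha]$ is a subset of $\Ho^2(X;A)$, so trivially $|\Ho^2(X;A)| \geq |\mathcal{O}_{[\alpha]}|$. By the orbit--stabiliser theorem applied to this action,
$$|\mathcal{O}_{[\alpha]}| = \frac{|\Aut(X) \times \Aut(A)|}{|\big(\Aut(X) \times \Aut(A)\big)_{[\alpha]}|},$$
and combining the two displays gives the claimed bound.

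There is no genuine obstacle: the only things to check have been done already (the action exists and is well-defined on cohomology classes by Proposition \ref{aut-action-by-auto-coho}, and finiteness of the acting group and of $\Ho^2(X;A)$ follow from finiteness of $X$ and $A$, since $C^2(X;A) = \Hom(C_2(X), A)$ is then finite). Thus the proof is complete in two sentences and requires no further computation.
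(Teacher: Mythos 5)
Your proof is correct and is exactly the paper's argument: the corollary is stated there as an immediate application of the orbit--stabiliser theorem to the action of $\Aut(X)\times\Aut(A)$ on $\Ho^2(X;A)$ from Proposition \ref{aut-action-by-auto-coho}, with the orbit of $[\alpha]$ sitting inside $\Ho^2(X;A)$. Nothing further is needed.
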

\medskip

Let $[\alpha] \in\Ho^2(X; A)$ be a fixed cohomology class. Note that the group $\Ho^2(X; A)$ acts freely and transitively on itself by left multiplication. For any $(\phi, \theta) \in \Aut(X) \times \Aut(A)$, since $[\alpha]$ and $^{(\phi, \theta)}[\alpha]$ are two elements of $\Ho^2(X; A)$, there exists a unique element, say, $\Theta_{[\alpha]} (\phi, \theta) \in \Ho^2(X; A)$ such that
\begin{equation}\label{abelian-wells-map}
^{\Theta_{[\alpha]}  (\phi, \theta)} \big(^{(\phi, \theta)}[\alpha] \big)= [\alpha].
\end{equation}
This defines a map $$\Theta_{[\alpha]} : \Aut(X) \times \Aut(A) \to \Ho^2(X; A).$$ 
For convenience of notation, we denote $\Theta_{[\alpha]}$ by $\Theta$. By Remark \ref{abelian-remark}, the map $\alpha': X \times X \to \Map(A\times A, A)$ given by $\alpha'_{x, y}(s, t)=s~\alpha_{x, y}$, where $x, y \in X$ and $s, t \in A$,  is a dynamical 2-cocycle. Thus, by Proposition \ref{set-cocycle}, the binary operation
\begin{equation}\label{abelian-extension-operation}
(x, s)*(y, t):=(x*y,~s~\alpha_{x, y})
\end{equation}
defines a quandle $E:= X \times_\alpha A$ called the {\it abelian extension} of $X$ by $A$ using $\alpha$. We would like to understand certain group of automorphisms of the quandle $E$ in terms of $\Aut(X)$, $\Aut(A)$ and  $\Ho^2(X; A)$. For this purpose, we define
\begin{small}
$$\Aut_A(E)=\Big\{ \psi \in \Aut(E)~|~\psi(x, s)=\big(\phi(x),~ \lambda_x\theta(s)\big)~\textrm{for some}~(\phi, \theta) \in \Aut(X)\times \Aut(A)~\textrm{and map}~\lambda:X \to A \Big\}.$$
\end{small}

\begin{proposition}
$\Aut_A(E)$ is a subgroup of $\Aut(E)$.
\end{proposition}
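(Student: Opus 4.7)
The plan is to show that $\Aut_A(E)$ contains the identity and is closed under composition and inversion. Closedness inside $\Aut(E)$ is automatic, so the real task is to verify that the prescribed functional form $\psi(x,s) = \bigl(\phi(x),\, \lambda_x\theta(s)\bigr)$ is preserved.

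First I would note that the identity sits in $\Aut_A(E)$ by taking $\phi = \id_X$, $\theta = \id_A$ and the constant map $\lambda_x = 1$. Next, for closure under composition, given $\psi(x,s)=\bigl(\phi(x),\lambda_x\theta(s)\bigr)$ and $\psi'(x,s)=\bigl(\phi'(x),\lambda'_x\theta'(s)\bigr)$, I would directly compute
\begin{eqnarray*}
\psi\psi'(x,s) &=& \psi\bigl(\phi'(x),\,\lambda'_x\theta'(s)\bigr)\\
&=& \bigl(\phi\phi'(x),\; \lambda_{\phi'(x)}\,\theta(\lambda'_x\theta'(s))\bigr)\\
&=& \bigl(\phi\phi'(x),\; \lambda_{\phi'(x)}\theta(\lambda'_x)\,\theta\theta'(s)\bigr),
\end{eqnarray*}
where the second factor is valid because $\theta \in \Aut(A)$. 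Setting $\phi'' := \phi\phi' \in \Aut(X)$, $\theta'' := \theta\theta' \in \Aut(A)$, and $\lambda''_x := \lambda_{\phi'(x)}\theta(\lambda'_x) \in A$ exhibits $\psi\psi'$ in the required form.

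For closure under inversion, given $\psi(x,s)=\bigl(\phi(x),\lambda_x\theta(s)\bigr)$, I would solve $\psi\bigl(\varphi(x,s)\bigr) = (x,s)$ componentwise: the first component forces the first coordinate of $\varphi(x,s)$ to be $\phi^{-1}(x)$, and then the second component requires $\lambda_{\phi^{-1}(x)}\,\theta(t) = s$, giving $t = \theta^{-1}\bigl(\lambda_{\phi^{-1}(x)}^{-1}\bigr)\,\theta^{-1}(s)$. Hence
$$\varphi(x,s) = \bigl(\phi^{-1}(x),\, \mu_x\theta^{-1}(s)\bigr), \qquad \mu_x := \theta^{-1}\bigl(\lambda_{\phi^{-1}(x)}^{-1}\bigr),$$
which is of the prescribed form with $(\phi^{-1},\theta^{-1}) \in \Aut(X) \times \Aut(A)$ and $\mu:X \to A$. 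Since $\psi \in \Aut(E)$ already has an inverse in $\Aut(E)$, uniqueness of set-theoretic inverses identifies this $\varphi$ with $\psi^{-1}$, so $\psi^{-1} \in \Aut_A(E)$.

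There is no substantive obstacle here; the only thing to keep track of is that the composition law for pairs $(\lambda,\theta)$ acting on $A$ by $s \mapsto \lambda\theta(s)$ is essentially that of the holomorph $A \rtimes \Aut(A)$, which ensures both formulas above produce admissible data $(\phi'',\theta'',\lambda'')$ and $(\phi^{-1},\theta^{-1},\mu)$. The quandle-automorphism axioms for these candidates need not be rechecked directly, since $\psi\psi'$ and $\psi^{-1}$ automatically lie in $\Aut(E)$.
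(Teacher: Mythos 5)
Your proof is correct, and its skeleton matches the paper's: both arguments exhibit the composition $\psi\psi'(x,s)=\bigl(\phi\phi'(x),\,\lambda_{\phi'(x)}\theta(\lambda'_x)\,\theta\theta'(s)\bigr)$ and the candidate inverse $\varphi(x,s)=\bigl(\phi^{-1}(x),\,\theta^{-1}(\lambda_{\phi^{-1}(x)}^{-1})\theta^{-1}(s)\bigr)$ by the same direct computations. The one genuine difference is in how the inverse is certified: the paper extracts the homomorphism condition $\lambda_{x*y}\,\theta(\alpha_{x,y})=\lambda_x\,\alpha_{\phi(x),\phi(y)}$ from $\psi$ and then re-verifies by hand that $\varphi$ respects the quandle operation, whereas you observe that $\psi^{-1}$ already lies in $\Aut(E)$ (the inverse of a bijective quandle homomorphism is automatically a homomorphism), so only the functional form $(x,s)\mapsto(\phi^{-1}(x),\mu_x\theta^{-1}(s))$ needs checking. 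Your shortcut is legitimate and trims a redundant verification; the paper's longer route has the incidental benefit of recording the identity \eqref{abelian-homo-condition}, which it reuses later (e.g.\ in Proposition \ref{kernel-rho-abelian}), but as a proof of the subgroup claim itself your version is complete.
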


\begin{proof}
Let $\psi \in\Aut_A(E)$ such that $\psi(x, s)=\big(\phi(x),~ \lambda_x\theta(s)\big)$ for $x \in X$ and $s \in A$. Define $\varphi(x,s) := \big(\phi^{-1}(x), ~\theta^{-1} \big(\lambda_{\phi^{-1}(x)}^{-1}s\big)\big)$. Clearly, $\varphi$ is a bijection since $\phi$ and $\theta$ are so. Since $\psi$ is a quandle homomorphism, for $x, y \in X$ and $s, t \in S$, we have
\begin{equation}\label{abelian-homo-condition}
\lambda_{x*y} ~\theta (\alpha_{x, y})=\lambda_x~ \alpha_{\phi(x), \phi(y)},
\end{equation}
which is equivalent to
\begin{equation}\label{useful-in-abelian-subgroup}
\theta^{-1} \big(\lambda_{\phi^{-1}(x*y)}^{-1} ~\alpha_{x, y}\big)=\theta^{-1} \big(\lambda_{\phi^{-1}(x)}^{-1}\big)  ~\alpha_{\phi^{-1}(x), \phi^{-1}(y)}.
\end{equation}

Now, consider
\begin{eqnarray*}
\varphi \big( (x, s)*(y, t) \big) &=& \varphi \big(x*y,~ s ~\alpha_{x, y} \big)\\
 &=&  \big(\phi^{-1}(x*y), ~\theta^{-1} \big(\lambda_{\phi^{-1}(x*y)}^{-1} s ~\alpha_{x, y}\big)\big)\\
  &=&    \big(\phi^{-1}(x)*\phi^{-1}(y), ~\theta^{-1} \big(\lambda_{\phi^{-1}(x)}^{-1}s\big)  ~\alpha_{\phi^{-1}(x), \phi^{-1}(y)}  \big),\\
  & &~\textrm{due to}~ \eqref{useful-in-abelian-subgroup}~\textrm{and the fact that $A$ is abelian}\\
  &=&    \big(\phi^{-1}(x), ~\theta^{-1} \big(\lambda_{\phi^{-1}(x)}^{-1}s\big)\big)* \big(\phi^{-1}(y), ~\theta^{-1} \big(\lambda_{\phi^{-1}(y)}^{-1}t\big) \big)\\
  &=&  \varphi(x,s) * \varphi(y,t).
\end{eqnarray*}
Thus, $\varphi \in \Aut_A(E)$ and a direct check shows that $\varphi$ is the inverse of $\psi$. Further, given $\psi(x, s)=\big(\phi(x), ~\lambda_x \theta(s)\big)$ and $\psi'(x, s)=\big(\phi'(x),~ \lambda'_x \theta'(s)\big)$, we have
\begin{equation}\label{product-psi-abelian}
\psi \psi'(x, s)=\psi \big( \phi'(x), ~\lambda'_x \theta'(s) \big)=\big(\phi\big( \phi'(x)\big), ~\lambda_{\phi'(x)} \theta \big(\lambda'_x \theta'(s) \big) \big)=\big(\phi \phi'(x), ~\lambda_{\phi'(x)} ~\theta (\lambda'_x) ~\theta\theta'(s) \big),
\end{equation}
and hence $\Aut_A(E)$ is a subgroup of $\Aut(E)$.
\end{proof}
\medskip

In this case, we have a natural restriction map $$\Psi: \Aut_A(E) \to \Aut(X) \times \Aut(A)$$ given by $\Psi(\psi)=(\phi, \theta)$.

\begin{proposition}\label{image-rho-ker-Theta}
The map $\Psi$ is a group homomorphism and $\img(\Psi)=\Theta^{-1}(1)$.
\end{proposition}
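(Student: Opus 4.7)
The plan is to split the proposition into two independent tasks: first showing that $\Psi$ is a homomorphism, then proving the two inclusions $\img(\Psi) \subseteq \Theta^{-1}(1)$ and $\Theta^{-1}(1) \subseteq \img(\Psi)$.

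For the homomorphism claim, the key input is the composition formula \eqref{product-psi-abelian}, which shows that if $\Psi(\psi) = (\phi, \theta)$ and $\Psi(\psi') = (\phi', \theta')$, then the first and third components of $\psi\psi'$ are precisely $\phi\phi'$ and $\theta\theta'$. The ``$\lambda$-part'' $\lambda_{\phi'(x)}\,\theta(\lambda'_x)$ is absorbed into the map $X \to A$ that $\Psi$ forgets, so $\Psi(\psi\psi') = (\phi\phi', \theta\theta') = \Psi(\psi)\Psi(\psi')$ follows immediately.

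For the inclusion $\img(\Psi) \subseteq \Theta^{-1}(1)$, I would start from an arbitrary $\psi(x, s) = (\phi(x), \lambda_x \theta(s)) \in \Aut_A(E)$. The automorphism condition \eqref{abelian-homo-condition}, namely $\lambda_{x*y}\,\theta(\alpha_{x,y}) = \lambda_x\,\alpha_{\phi(x), \phi(y)}$, can be rewritten by substituting $x \mapsto \phi^{-1}(x)$, $y \mapsto \phi^{-1}(y)$ to give
\begin{equation*}
{}^{(\phi, \theta)}\alpha_{x, y} \;=\; \theta\bigl(\alpha_{\phi^{-1}(x), \phi^{-1}(y)}\bigr) \;=\; \bigl(\lambda_{\phi^{-1}(x)}\bigr)\bigl(\lambda_{\phi^{-1}(x*y)}\bigr)^{-1}\, \alpha_{x, y}.
\end{equation*}
Setting $\nu_x := \lambda_{\phi^{-1}(x)}^{-1}$ and using that $A$ is abelian, this says $^{(\phi,\theta)}\alpha_{x,y} = \nu_{x*y}\,\nu_x^{-1}\,\alpha_{x,y}$, i.e.\ $^{(\phi,\theta)}\alpha$ is cohomologous to $\alpha$. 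Hence $^{(\phi,\theta)}[\alpha] = [\alpha]$, which by the defining equation \eqref{abelian-wells-map} of $\Theta$ forces $\Theta(\phi,\theta) = 1$. For the reverse inclusion, given $(\phi, \theta) \in \Theta^{-1}(1)$ I would pick a witness $\lambda : X \to A$ with $^{(\phi,\theta)}\alpha_{x,y} = \lambda_{x*y}\,\lambda_x^{-1}\,\alpha_{x,y}$, then define $\psi(x, s) := \bigl(\phi(x),\, \lambda_{\phi(x)}^{-1}\,\theta(s)\bigr)$. Direct substitution shows that with $\mu_x := \lambda_{\phi(x)}^{-1}$ the required identity \eqref{abelian-homo-condition} holds (the $\lambda$-factors on the two sides cancel precisely because of the cohomology relation), so $\psi \in \Aut_A(E)$ and $\Psi(\psi) = (\phi, \theta)$.

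The main obstacle is purely bookkeeping: the action $^{(\phi,\theta)}\alpha$ is defined using $\phi^{-1}$ (and has coefficients shifted by $\theta$), while the automorphism equation \eqref{abelian-homo-condition} involves $\phi$ directly, so one must translate between the ``$x$'' and ``$\phi^{-1}(x)$'' indices without miscalculating, and one must choose $\mu$ so that its twist by $\phi$ matches the cohomology cochain $\lambda$. Once this indexing is done carefully and one remembers that $A$ is abelian (so the order of the $\lambda$-factors is immaterial), both directions reduce to a one-line cancellation.
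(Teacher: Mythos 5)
Your proposal is correct and follows essentially the same route as the paper: the homomorphism claim via \eqref{product-psi-abelian}, the identification of $\Theta^{-1}(1)$ with the stabiliser $\big(\Aut(X)\times\Aut(A)\big)_{[\alpha]}$, and the translation between the automorphism condition \eqref{abelian-homo-condition} and the coboundary relation by the substitution $x\mapsto\phi^{\mp1}(x)$. The only differences are cosmetic (the sign convention on $\lambda$ and the order in which the two inclusions are established).
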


\begin{proof}
It is clear from \eqref{product-psi-abelian} that $\Psi$ is a group homomorphism. First notice that $$\Theta^{-1}(1)=\big(\Aut(X) \times \Aut(A) \big)_{[\alpha]}$$ and let $(\phi, \theta)\in \big(\Aut(X) \times \Aut(A) \big)_{[\alpha]}$. Then there exists a map $\lambda:X \to A$ such that
\begin{equation}\label{showing-psi-kernel}
\theta \big(\alpha_{\phi^{-1}(x), \phi^{-1}(y)} \big)=\alpha_{x, y}~\lambda_x~ \lambda_{x*y}^{-1}.
\end{equation}
for all $x, y \in X$. Define $\psi: E \to E$ by setting $\psi(x, s)= \big(\phi(x), \lambda_{\phi(x)} \theta(s)\big)$. Clearly, $\psi$ is bijective and is a quandle homomorphism due to \eqref{showing-psi-kernel}. Further, $\Psi(\psi)=(\phi, \theta)$, and hence $\big(\Aut(X) \times \Aut(A) \big)_{[\alpha]} \subseteq \img(\Psi)$.
\par
Conversely, if $(\phi, \theta)\in \img(\Psi)$, then there exists a map $\lambda: X \to A$ such that \eqref{showing-psi-kernel} holds. But this is equivalent to $^{(\phi, \theta)}[\alpha]=[\alpha]$, which is desired.
 \end{proof}
\medskip

\begin{proposition}\label{kernel-rho-abelian}
$\Ker(\Psi) \cong \Z^1(X;A) =\{\lambda: X \to A~|~\lambda_x=\lambda_{x*y}~\textrm{for all}~x, y \in X\}$.
\end{proposition}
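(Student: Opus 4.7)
The plan is to unpack what membership in $\Ker(\Psi)$ entails, recognize the resulting condition on the remaining ``inner'' data as the quandle 1-cocycle condition, and check that the correspondence is a group isomorphism. First, an element $\psi \in \Aut_A(E)$ lies in $\Ker(\Psi)$ precisely when $\Psi(\psi) = (\id_X, \id_A)$, which by the defining shape of elements of $\Aut_A(E)$ means
\[
\psi(x, s) = (x,\, \lambda_x s)
\]
for some map $\lambda : X \to A$. So the assignment $\psi \mapsto \lambda$ sets up an injective correspondence from $\Ker(\Psi)$ into the set of maps $X \to A$.

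Next, I would determine the exact image of this assignment by enforcing that $\psi$ is a quandle homomorphism. Using \eqref{abelian-extension-operation}, I would compute
\[
\psi\bigl((x,s) * (y,t)\bigr) = \bigl(x*y,\; \lambda_{x*y}\, s\, \alpha_{x,y}\bigr),
\]
and, using commutativity of $A$,
\[
\psi(x,s) * \psi(y,t) = \bigl(x*y,\; \lambda_x\, s\, \alpha_{x,y}\bigr).
\]
Equating these forces $\lambda_{x*y} = \lambda_x$ for all $x, y \in X$, independent of $\alpha$. Writing $A$ multiplicatively, the coboundary satisfies $\delta^1\lambda(x,y) = \lambda_{x*y}\lambda_x^{-1}$, so this condition is precisely $\lambda \in \Z^1(X;A)$, matching the stated description. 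Conversely, given any such $\lambda$, the map $(x,s) \mapsto (x,\lambda_x s)$ is a bijection of $E$ (since $s \mapsto \lambda_x s$ is a bijection of $A$), and the same calculation in reverse shows it is a quandle automorphism; this yields surjectivity onto $\Z^1(X;A)$.

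Finally, to upgrade the bijection to a group isomorphism, I would specialize the product formula \eqref{product-psi-abelian} to $\phi = \phi' = \id_X$ and $\theta = \theta' = \id_A$, which gives $(\psi\psi')(x,s) = (x,\, \lambda_x \lambda'_x s)$. Thus composition in $\Ker(\Psi)$ corresponds to pointwise multiplication in $A^X$, which is exactly the group operation on $\Z^1(X;A)$. There is no substantive obstacle; the argument is a direct translation between the automorphism condition on $\psi$ and the 1-cocycle condition on $\lambda$, with the commutativity of $A$ being the only ingredient that makes the $s$-dependence drop out cleanly.
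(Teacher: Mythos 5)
Your proof is correct and follows essentially the same route as the paper: identify the kernel condition $\Psi(\psi)=(\id_X,\id_A)$, derive $\lambda_{x*y}=\lambda_x$ from the quandle homomorphism condition, and match the group operation via the product formula \eqref{product-psi-abelian}. The paper's version is just more terse, citing \eqref{abelian-homo-condition} specialized to $\phi=\id_X$, $\theta=\id_A$ in place of your explicit computation, and omitting the converse and group-law checks that you spell out.
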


\begin{proof}
Note that $\psi \in \Ker(\Psi)$ if and only if $\Psi(\psi)=(\id_X, \id_S)$. This gives $\psi(x, s)=\big(x, \lambda_x~ s\big)$ for all $x \in X$ and $s \in A$. Further, by \eqref{abelian-homo-condition}, $\psi$ is a quandle homomorphism if and only if $\lambda_x=\lambda_{x*y}$. Thus, $\Ker(\Psi)$ is the desired group $\Z^1(X;A)$ of 1-cocycles identified via the map $\psi \mapsto \lambda$.
\end{proof}

\medskip
We are now in a position to state the main result of this section that gives an exact sequence relating quandle 1-cocycles, automorphisms and second cohomology of quandles for an abelian quandle extension.

\begin{theorem}\label{abelian-main-theorem}
Let $X$ be a quandle, $A$ an abelian group, $\alpha\in \Z^2(X; A)$ and $E=X \times_\alpha A$ the abelian extension of $X$ by $A$ using $\alpha$. Then there exists an exact sequence
\begin{equation}\label{abelian-well-sequence}
1 \longrightarrow \Z^1(X;A) \longrightarrow \Aut_A(E) \stackrel{\Psi}{\longrightarrow} \Aut(X) \times \Aut(A) \stackrel{\Theta}{\longrightarrow} \Ho^2(X; A),
\end{equation}
where exactness at $ \Aut(X) \times \Aut(A)$ means that $\img(\Psi)=\Theta^{-1}(1)$.
\end{theorem}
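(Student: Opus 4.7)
The plan is to assemble the exact sequence directly from Propositions \ref{image-rho-ker-Theta} and \ref{kernel-rho-abelian}, after making the embedding $\Z^1(X;A) \hookrightarrow \Aut_A(E)$ explicit. First I would define the map on the left of the sequence: given a 1-cocycle $\lambda \in \Z^1(X;A)$, set $\psi_\lambda(x,s) := (x, \lambda_x s)$. Using the characterisation of $\Z^1(X;A)$ in Proposition \ref{kernel-rho-abelian} (the condition $\lambda_x = \lambda_{x*y}$) together with identity \eqref{abelian-homo-condition}, $\psi_\lambda$ is a quandle endomorphism of $E=X \times_\alpha A$; bijectivity is automatic since $(x,s) \mapsto (x, \lambda_x^{-1} s)$ is an inverse. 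Hence $\psi_\lambda \in \Aut_A(E)$, taking $\phi = \id_X$, $\theta = \id_A$ and the cocycle $\lambda$ itself as the $\lambda$-component.

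Next, I would verify that $\lambda \mapsto \psi_\lambda$ is a group homomorphism. The composition formula \eqref{product-psi-abelian}, specialised to $\phi = \phi' = \id_X$ and $\theta = \theta' = \id_A$, gives $\psi_\lambda \psi_{\lambda'}(x,s) = (x, \lambda_x \lambda'_x s)$, matching the pointwise product on $\Z^1(X;A)$. Injectivity is immediate, since $\psi_\lambda = \id_E$ forces $\lambda_x = 1$ for every $x \in X$. This settles exactness at $\Z^1(X;A)$.

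For exactness at $\Aut_A(E)$, Proposition \ref{kernel-rho-abelian} already describes $\Ker(\Psi)$ as the set of automorphisms of the form $(x,s) \mapsto (x, \lambda_x s)$ with $\lambda \in \Z^1(X;A)$; under the identification $\psi \leftrightarrow \lambda$, this is precisely the image of the embedding defined above. For exactness at $\Aut(X) \times \Aut(A)$, Proposition \ref{image-rho-ker-Theta} directly gives $\img(\Psi) = \Theta^{-1}(1)$, which is the content of exactness at that position (using the convention from Theorem \ref{main-thm-1} that exactness at the penultimate term means the image coincides with the fibre of $\Theta$ over the distinguished element).

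There is no real obstacle here: all the substantive work (showing $\Aut_A(E)$ is a group, that $\Psi$ is a homomorphism, that $\Theta$ is well-defined via \eqref{abelian-wells-map}, and the computations of $\Ker(\Psi)$ and $\img(\Psi)$) has been done in the preceding propositions. The only care needed is in checking that the group operation on $\Z^1(X;A)$ inherited from $\Aut_A(E)$ under $\lambda \mapsto \psi_\lambda$ coincides with its native abelian-group structure, which follows from \eqref{product-psi-abelian} as noted above.
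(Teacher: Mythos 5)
Your proposal is correct and follows essentially the same route as the paper: the theorem is obtained by combining Proposition \ref{image-rho-ker-Theta} (which gives $\img(\Psi)=\Theta^{-1}(1)$) with Proposition \ref{kernel-rho-abelian} (which identifies $\Ker(\Psi)$ with $\Z^1(X;A)$). Your explicit verification that $\lambda\mapsto\psi_\lambda$ is an injective group homomorphism whose image is $\Ker(\Psi)$ merely fills in details the paper leaves implicit, and it is carried out correctly via \eqref{abelian-homo-condition} and \eqref{product-psi-abelian}.
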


Theorem \ref{abelian-main-theorem} can be reformulated as follows.

\begin{corollary}
Let $X$ be a quandle, $A$ an abelian group, $\alpha\in \Z^2(X; A)$ and $E=X \times_\alpha A$ the abelian extension of $X$ by $A$ using $\alpha$. Then there exists a short exact sequence
\begin{equation}\label{abelian-well-sequence}
1 \longrightarrow \Z^1(X;A) \longrightarrow \Aut_A(E) \stackrel{\Psi}{\longrightarrow} \big(\Aut(X) \times \Aut(A)\big)_{[\alpha]} \longrightarrow 1.
\end{equation}
\end{corollary}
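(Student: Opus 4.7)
The plan is to derive this corollary as an essentially formal consequence of Theorem \ref{abelian-main-theorem} together with the results already established in Propositions \ref{image-rho-ker-Theta} and \ref{kernel-rho-abelian}. The only genuine content beyond those statements is the identification
$$\Theta^{-1}(1) = \big(\Aut(X) \times \Aut(A)\big)_{[\alpha]}.$$

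First I would recall from Proposition \ref{image-rho-ker-Theta} that $\Psi$ is a group homomorphism with $\img(\Psi) = \Theta^{-1}(1)$, and from Proposition \ref{kernel-rho-abelian} that $\Ker(\Psi) \cong \Z^1(X;A)$. So restricting the codomain of $\Psi$ to its image already yields a short exact sequence of groups with kernel $\Z^1(X;A)$; it remains only to rewrite this image as the stabiliser of $[\alpha]$.

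For that, I would unpack the defining equation \eqref{abelian-wells-map}. Since $\Ho^2(X;A)$ acts on itself freely and transitively by left multiplication, the relation $^{\Theta(\phi,\theta)}\big(\,^{(\phi,\theta)}[\alpha]\big) = [\alpha]$ reads simply $\Theta(\phi,\theta)\cdot\,^{(\phi,\theta)}[\alpha] = [\alpha]$ in $\Ho^2(X;A)$. Hence $\Theta(\phi,\theta)=1$ if and only if $^{(\phi,\theta)}[\alpha] = [\alpha]$, that is, $(\phi,\theta) \in \big(\Aut(X)\times \Aut(A)\big)_{[\alpha]}$. Uniqueness in \eqref{abelian-wells-map} gives the reverse implication as well, so $\Theta^{-1}(1)$ is precisely the stabiliser of $[\alpha]$.

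Combining these two observations, the sequence in Theorem \ref{abelian-main-theorem} truncates to the asserted short exact sequence. There is no real obstacle here; the argument is purely a matter of assembling Propositions \ref{image-rho-ker-Theta} and \ref{kernel-rho-abelian} with the trivial observation about the free transitive self-action of $\Ho^2(X;A)$.
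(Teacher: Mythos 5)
Your proof is correct and is essentially the paper's own route: the paper treats this corollary as an immediate reformulation of Theorem \ref{abelian-main-theorem}, relying on the identification $\Theta^{-1}(1)=\big(\Aut(X)\times\Aut(A)\big)_{[\alpha]}$ that is already recorded at the start of the proof of Proposition \ref{image-rho-ker-Theta}, exactly as you argue via the free transitive self-action of $\Ho^2(X;A)$.
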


The second quandle cohomology is an obstruction to extension of automorphisms in quandle extensions.

\begin{corollary}
Let $X$ be a quandle and $A$ an abelian group such that $\Ho^2(X; A)=1$. Then every automorphism in $\Aut(X) \times \Aut(A)$ extends to an automorphism in $ \Aut_A(E)$.
\end{corollary}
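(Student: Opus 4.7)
The plan is to deduce this directly from the exact sequence \eqref{abelian-well-sequence} of Theorem \ref{abelian-main-theorem}. The hypothesis $\Ho^2(X; A) = 1$ forces the codomain of $\Theta$ to be trivial, so $\Theta$ is the constant map sending every pair $(\phi, \theta)$ to the identity class. Consequently $\Theta^{-1}(1) = \Aut(X) \times \Aut(A)$.

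Next, I would invoke exactness of \eqref{abelian-well-sequence} at the term $\Aut(X) \times \Aut(A)$, which by definition asserts $\img(\Psi) = \Theta^{-1}(1)$. Combining this with the previous observation yields $\img(\Psi) = \Aut(X) \times \Aut(A)$, i.e., the restriction homomorphism $\Psi: \Aut_A(E) \to \Aut(X) \times \Aut(A)$ is surjective. Unpacking the definition of $\Psi$ then gives, for each $(\phi, \theta) \in \Aut(X) \times \Aut(A)$, an automorphism $\psi \in \Aut_A(E)$ of the form $\psi(x, s) = \bigl(\phi(x),\, \lambda_x \theta(s) \bigr)$ for some map $\lambda: X \to A$, which is precisely the required extension.

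There is essentially no obstacle here beyond correctly citing the prior results: the content of the corollary is entirely encoded in the exact sequence, together with the trivial observation that a map into the trivial group has full preimage. If one wished to be fully explicit, one could additionally point out that in this setting the short exact sequence from the preceding corollary becomes $1 \to \Z^1(X; A) \to \Aut_A(E) \to \Aut(X) \times \Aut(A) \to 1$, which makes the extendability of automorphisms manifest.
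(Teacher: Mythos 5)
Your proposal is correct and matches the paper's intended argument: the corollary is stated as an immediate consequence of Theorem \ref{abelian-main-theorem}, and your deduction that $\Ho^2(X;A)=1$ forces $\Theta^{-1}(1)=\Aut(X)\times\Aut(A)$, hence $\img(\Psi)=\Aut(X)\times\Aut(A)$ by exactness, is exactly the point. Nothing further is needed.
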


Restricting the action of $\Aut(X) \times \Aut(A)$ on $\Ho^2(X; A)$ to that of its subgroups $\Aut(X)$ and $\Aut(A)$ gives the following result.

\begin{corollary}
Every automorphism in $\Aut(X)_{[\alpha]}$ and $\Aut(A)_{[\alpha]}$ can be extended to an automorphism in $\Aut_A(E)$.
\end{corollary}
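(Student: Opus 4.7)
The plan is to interpret $\Aut(X)_{[\alpha]}$ and $\Aut(A)_{[\alpha]}$ as subgroups of the full stabiliser $\big(\Aut(X)\times \Aut(A)\big)_{[\alpha]}$ and then invoke Theorem \ref{abelian-main-theorem}. Embed $\Aut(X)$ and $\Aut(A)$ into $\Aut(X)\times \Aut(A)$ as $\phi \mapsto (\phi,\id_A)$ and $\theta \mapsto (\id_X,\theta)$ respectively. By the formula \eqref{abelian-action}, the induced action of $\Aut(X)$ on $\Ho^2(X;A)$ sends $[\alpha]$ to $[\,^{(\phi,\id_A)}\alpha\,]$ with $^{(\phi,\id_A)}\alpha_{x,y}=\alpha_{\phi^{-1}(x),\phi^{-1}(y)}$, which is precisely the restricted action used to define the stabiliser $\Aut(X)_{[\alpha]}$; similarly for $\Aut(A)_{[\alpha]}$. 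Consequently
\[
\Aut(X)_{[\alpha]} \times \{\id_A\} \; \subseteq\; \big(\Aut(X)\times \Aut(A)\big)_{[\alpha]} \quad\text{and}\quad \{\id_X\}\times \Aut(A)_{[\alpha]} \; \subseteq\; \big(\Aut(X)\times \Aut(A)\big)_{[\alpha]}.
\]

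Now take $\phi \in \Aut(X)_{[\alpha]}$. Then $(\phi,\id_A)$ lies in $\big(\Aut(X)\times \Aut(A)\big)_{[\alpha]} = \Theta^{-1}(1)$, so by Proposition \ref{image-rho-ker-Theta} there exists $\psi \in \Aut_A(E)$ with $\Psi(\psi)=(\phi,\id_A)$. Explicitly, from the proof of Proposition \ref{image-rho-ker-Theta}, choosing $\lambda:X \to A$ such that $\alpha_{\phi^{-1}(x),\phi^{-1}(y)} = \alpha_{x,y}\,\lambda_x\,\lambda_{x*y}^{-1}$, one takes $\psi(x,s)=\big(\phi(x),\lambda_{\phi(x)}\,s\big)$; this is an automorphism of $E$ which extends $\phi$ in the sense that its first coordinate is $\phi$ and its second coordinate is trivial on $A$. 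The same argument applied to $(\id_X,\theta)$ for $\theta \in \Aut(A)_{[\alpha]}$ produces a $\psi \in \Aut_A(E)$ with $\Psi(\psi)=(\id_X,\theta)$, extending $\theta$.
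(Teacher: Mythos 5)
Your proof is correct and follows essentially the same route as the paper: the paper obtains this corollary by restricting the action of $\Aut(X)\times\Aut(A)$ on $\Ho^2(X;A)$ to the subgroups $\Aut(X)$ and $\Aut(A)$, so that the respective stabilisers land inside $\big(\Aut(X)\times\Aut(A)\big)_{[\alpha]}=\img(\Psi)$, exactly as you argue. Your explicit construction of the lift $\psi$ from the proof of Proposition \ref{image-rho-ker-Theta} is a welcome but inessential elaboration.
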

\medskip

\section{Properties of the map $\Theta$}\label{properties of map theta}
In this section, we study the map $\Theta$ in more detail. Let $X$ be a quandle and $A$ an abelian group. Since the group $\Aut(X) \times \Aut(A)$ acts on the group $\Ho^2(X; A)$, we have their semi-direct product $\Ho^2(X; A) \rtimes \big(\Aut(X) \times \Aut(A) \big)$. Further, note that the group $\Ho^2(X; A)$ acts on itself by left multiplication.

\begin{proposition}
For $(\phi, \theta) \in  \Aut(X) \times \Aut(A)$ and $[\alpha], [\beta]\in  \Ho^2(X; A)$ setting
$$^{[\alpha](\phi, \theta)} [\beta]=~^{[\alpha]}{(^{(\phi, \theta)} [\beta])}$$
defines an action of $~\Ho^2(X; A) \rtimes \big(\Aut(X) \times \Aut(A) \big)$ on $\Ho^2(X; A)$.
\end{proposition}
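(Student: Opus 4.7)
The plan is to verify the two axioms of a group action directly from the definition, using as the key input Proposition \ref{aut-action-by-auto-coho}, which states that $\Aut(X)\times\Aut(A)$ acts on $\Ho^2(X;A)$ by group automorphisms. First I would fix notation: denote a typical element of the semi-direct product by $([\alpha],(\phi,\theta))$, with multiplication
\begin{equation*}
([\alpha_1],(\phi_1,\theta_1))\cdot([\alpha_2],(\phi_2,\theta_2))=\Big([\alpha_1]\cdot{}^{(\phi_1,\theta_1)}[\alpha_2],~(\phi_1\phi_2,\theta_1\theta_2)\Big),
\end{equation*}
and the proposed action is $^{([\alpha],(\phi,\theta))}[\beta]:=[\alpha]\cdot{}^{(\phi,\theta)}[\beta]$, where the second factor acts as in \eqref{abelian-action} and the first acts by left multiplication in the abelian group $\Ho^2(X;A)$.

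The identity check is immediate: the identity of $\Ho^2(X;A)\rtimes(\Aut(X)\times\Aut(A))$ is $([1],(\id_X,\id_A))$, and it acts trivially since ${}^{(\id_X,\id_A)}[\beta]=[\beta]$ and $[1]\cdot[\beta]=[\beta]$. The substantive step is the composition axiom: one must show that for all $([\alpha_i],(\phi_i,\theta_i))$ and $[\beta]\in\Ho^2(X;A)$,
\begin{equation*}
{}^{([\alpha_1],(\phi_1,\theta_1))\cdot([\alpha_2],(\phi_2,\theta_2))}[\beta] = {}^{([\alpha_1],(\phi_1,\theta_1))}\Big({}^{([\alpha_2],(\phi_2,\theta_2))}[\beta]\Big).
\end{equation*}
I would expand both sides. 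The left-hand side becomes $[\alpha_1]\cdot{}^{(\phi_1,\theta_1)}[\alpha_2]\cdot{}^{(\phi_1\phi_2,\theta_1\theta_2)}[\beta]$, while the right-hand side becomes $[\alpha_1]\cdot{}^{(\phi_1,\theta_1)}\bigl([\alpha_2]\cdot{}^{(\phi_2,\theta_2)}[\beta]\bigr)$.

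The crux is then to invoke Proposition \ref{aut-action-by-auto-coho}: since $(\phi_1,\theta_1)$ acts on the abelian group $\Ho^2(X;A)$ by automorphisms, it distributes over the product, giving ${}^{(\phi_1,\theta_1)}\bigl([\alpha_2]\cdot{}^{(\phi_2,\theta_2)}[\beta]\bigr)={}^{(\phi_1,\theta_1)}[\alpha_2]\cdot{}^{(\phi_1,\theta_1)}({}^{(\phi_2,\theta_2)}[\beta])$. Combining this with the fact (already established in the proof of Proposition \ref{aut-action-by-auto-coho} along the lines of Lemma \ref{aut-action-cocycles}) that $(\phi,\theta)\mapsto{}^{(\phi,\theta)}$ is a genuine left action of $\Aut(X)\times\Aut(A)$, so that ${}^{(\phi_1,\theta_1)}({}^{(\phi_2,\theta_2)}[\beta])={}^{(\phi_1\phi_2,\theta_1\theta_2)}[\beta]$, equates the two sides. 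The main (and only) obstacle is being careful with this distributivity; once the automorphism property is used, the verification reduces to the standard fact that for any group $N$ with a $G$-action by automorphisms, left multiplication of $N$ on itself combines with the $G$-action to yield an action of $N\rtimes G$ on $N$.
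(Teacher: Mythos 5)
Your proposal is correct and follows essentially the same route as the paper: both expand the composition axiom, use Proposition \ref{aut-action-by-auto-coho} to distribute the $(\phi_1,\theta_1)$-action over the product $[\alpha_2]\cdot{}^{(\phi_2,\theta_2)}[\beta]$, and then invoke the left-action property of $\Aut(X)\times\Aut(A)$ on $\Ho^2(X;A)$ together with left translation to equate the two sides. Your closing remark that this is the standard fact about $N\rtimes G$ acting on $N$ is exactly the content of the paper's computation.
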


\begin{proof}
For $ (\phi_1, \theta_1), (\phi_2, \theta_2) \in \Aut(X) \times \Aut(A)$ and $[\alpha_1], [\alpha_2], [\beta] \in \Ho^2(X; A)$, we compute
\begin{eqnarray*}
^{\big([\alpha_1](\phi_1, \theta_1)\big)\big([\alpha_2](\phi_2, \theta_2)\big)}[\beta] & = & ^{\big([\alpha_1]~^{(\phi_1, \theta_1)}[\alpha_2] \big) \big((\phi_1, \theta_1)(\phi_2, \theta_2)\big)}[\beta]\\
& = & ^{\big([\alpha_1]~^{(\phi_1, \theta_1)}[\alpha_2]\big)}{\big(^{\big((\phi_1, \theta_1)(\phi_2, \theta_2)\big)}[\beta] \big)}\\
& = & ^{[\alpha_1]}{\big(^{^{(\phi_1, \theta_1)}{[\alpha_2]}}{\big(^{(\phi_1, \theta_1)}{\big(^{(\phi_2, \theta_2)}[\beta]\big)} \big)} \big)}\\
& = & ^{[\alpha_1]}{\big({^{(\phi_1, \theta_1)}{[\alpha_2]}}{\big(^{(\phi_1, \theta_1)}{\big(^{(\phi_2, \theta_2)}[\beta]\big)} \big)} \big)},\\
& &  \textrm{since}~^{(\phi_1, \theta_1}{[\alpha_2]} \in  \Ho^2(X; A),~\textrm{which acts on itself by left translation}\\
& = & ^{[\alpha_1]}{\big({^{(\phi_1, \theta_1)}{\big([\alpha_2] \big(^{(\phi_2, \theta_2)}[\beta]\big)}} \big)\big)},\\
&  & \textrm{since}~\Aut(X) \times \Aut(A)~\textrm{acts by automorphisms on}~\Ho^2(X; A)\\
& = & ^{\big([\alpha_1](\phi_1, \theta_1)\big)}{\big(^{\big([\alpha_2](\phi_2, \theta_2)\big)}[\beta] \big)},\\
& & ~ \textrm{since}~\Ho^2(X; A)~\textrm{acts on itself by left translation}.
\end{eqnarray*}
Thus, $\Ho^2(X; A) \rtimes \big(\Aut(X) \times \Aut(A) \big)$ acts on $\Ho^2(X; A)$.
\end{proof}
\medskip

We recall two basic definitions from group cohomology that we need in the subsequent discussion \cite[Chapter 4]{Brown1981}. Let $A$ be an abelian group and $G$ a group acting on $A$ from left by automorphisms.  Then the first cohomology of $G$ with coefficients in $A$ is defined as $\Ho^1(G; A):=\Z^1(G; A)/\B^1(G; A)$, where
$$\Z^1(G; A) =  \big\{f:G  \to A~|~ f(xy)= f(x)~{^{x}}f(y)~ \textrm{for all}\ x,y\in G\big\}$$
is the group of {\it derivations} or  {\it 1-cocycles} and
$$\B^1(G;A) = \big\{f:G\to A~|~\textrm{there exists}~a \in A~\textrm{such that}~ f(x)=({^x}a)a^{-1}~\textrm{for all}~x\in G \big\}$$
is the group of {\it inner derivations} or  {\it 1-coboundaries}.
\medskip

Also recall that a complement of a subgroup $H$ in a group $G$ is another subgroup $K$ of $G$ such that $G=HK$ and $H\cap K=1$.  The following result relating derivations and complements is well-known \cite[11.1.2]{Robinson1982}.

\begin{lemma}\label{1-cocycle-complement}
Let $H$ be an abelian group and $G$ a group acting on $H$ by automorphisms. Then the map $f \mapsto \{f(g)g~|~g \in G \}$ gives a bijection from the set $\Z^1(G; H)$ of derivations to the set $\{K~|~G=HK~\textrm{and}~H \cap K=1 \}$ of complements of $H$ in $G$. 
\end{lemma}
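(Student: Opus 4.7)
The plan is to interpret the statement inside the semidirect product $E = H \rtimes G$, in which $H$ is a normal subgroup and $G$ embeds as a distinguished complement, so that ``complement of $H$ in $G$'' naturally means complement of $H$ inside $E$. Elements of $E$ are written as products $hg$ with multiplication $(h_1 g_1)(h_2 g_2) = h_1 \cdot {}^{g_1}h_2 \cdot g_1 g_2$. The claim is that $f \mapsto K_f := \{f(g)g \mid g \in G\}$ is a bijection onto complements of $H$ in $E$.

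\textbf{Well-definedness.} Given $f \in \Z^1(G;H)$, I would first check $K_f$ is a subgroup. The derivation identity yields
\[
(f(g_1)g_1)(f(g_2)g_2) \;=\; f(g_1)\cdot {}^{g_1}f(g_2)\cdot g_1 g_2 \;=\; f(g_1 g_2) \cdot g_1 g_2 \in K_f.
\]
Since $f(1)=1$ (immediate from $f(1)=f(1\cdot 1)=f(1)\cdot f(1)$), the identity lies in $K_f$, and $f(g^{-1})={}^{g^{-1}}(f(g)^{-1})$ (from $f(1) = f(g)\cdot {}^g f(g^{-1})$) shows that inverses do too. To see $K_f$ complements $H$: if $f(g)g \in H$, projecting to $G$ forces $g=1$ and hence $f(g)g=1$; and any $hg \in E$ factors as $(hf(g)^{-1})\cdot (f(g)g) \in H K_f$.

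\textbf{Bijectivity.} Injectivity is transparent: if $K_f = K_{f'}$, then each $f(g)g$ equals some $f'(g')g'$, and comparing $G$-components gives $g'=g$ and hence $f(g)=f'(g)$. For surjectivity, given a complement $K$ of $H$ in $E$, each $g \in G \subseteq E$ admits a unique factorization $g = h_g\, k_g$ with $h_g \in H$, $k_g \in K$; set $f(g) := h_g^{-1}$, so that $f(g)g = k_g \in K$. The closure of $K$ under multiplication, combined with the identity $k_{g_1} k_{g_2} = f(g_1)\cdot {}^{g_1}f(g_2)\cdot g_1 g_2$, forces the derivation identity $f(g_1 g_2) = f(g_1)\cdot {}^{g_1}f(g_2)$, so $f \in \Z^1(G;H)$, and by construction $K_f = K$.

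The verification is routine once the ambient group has been identified correctly; the only mild obstacle is keeping straight the interplay between the abstract $G$-action ${}^{g}h$ on $H$ and its realization as conjugation inside the semidirect product $E$, which is exactly what turns the cocycle equation into the subgroup closure of $K_f$.
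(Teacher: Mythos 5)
The paper does not prove this lemma; it is quoted as a well-known fact with a citation to Robinson, 11.1.2, so there is no in-paper argument to compare against. Your proof is correct and is the standard one: you rightly read the (slightly abusive) phrase ``complements of $H$ in $G$'' as complements of $H$ in the semidirect product $E=H\rtimes G$, and the verifications — closure of $K_f$ via the cocycle identity, triviality of $K_f\cap H$ by projecting to $G$, and recovery of the derivation from the unique factorization $g=h_g k_g$ — are exactly the computations one finds in the cited source. The only step stated rather than argued is that $K_f=K$ (not merely $K_f\subseteq K$) in the surjectivity part, but this follows immediately since $K_f$ is itself a complement contained in $K$, so the gloss is harmless.
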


\begin{theorem}\label{main-thm-3}
The map $\Theta: \Aut(X) \times \Aut(A) \longrightarrow \Ho^2(X; A)$ is a derivation. Further, if $\Theta$ and $\Theta'$ correspond to two  quandle 2-cocycles, then $\Theta$ and $\Theta'$ differ by an inner derivation.
\end{theorem}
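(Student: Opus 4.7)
The plan is to first unpack the definition of $\Theta$ so it becomes a concrete formula, then verify the derivation identity by a direct comparison, and finally compare two such maps attached to different 2-cocycles.

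\textbf{Step 1 (explicit formula for $\Theta$).} Since $\Ho^2(X;A)$ is abelian and acts on itself by left multiplication, the defining equation \eqref{abelian-wells-map} reads
$$\Theta(\phi,\theta)\,\cdot\,{}^{(\phi,\theta)}[\alpha] \;=\; [\alpha],$$
so that
$$\Theta(\phi,\theta) \;=\; [\alpha]\,\cdot\,\bigl({}^{(\phi,\theta)}[\alpha]\bigr)^{-1}.$$
This is the only computational tool needed, together with the fact (Proposition \ref{aut-action-by-auto-coho}) that $\Aut(X)\times\Aut(A)$ acts on $\Ho^2(X;A)$ by group automorphisms, and the multiplicativity of the action: ${}^{(\phi_1,\theta_1)(\phi_2,\theta_2)}[\beta]={}^{(\phi_1,\theta_1)}\bigl({}^{(\phi_2,\theta_2)}[\beta]\bigr)$.

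\textbf{Step 2 (derivation identity).} I would check
$$\Theta\bigl((\phi_1,\theta_1)(\phi_2,\theta_2)\bigr) \;=\; \Theta(\phi_1,\theta_1)\,\cdot\,{}^{(\phi_1,\theta_1)}\Theta(\phi_2,\theta_2)$$
by substituting the formula from Step 1 into both sides. The left hand side becomes $[\alpha]\cdot\bigl({}^{(\phi_1,\theta_1)(\phi_2,\theta_2)}[\alpha]\bigr)^{-1}$. For the right hand side, apply ${}^{(\phi_1,\theta_1)}$ to $[\alpha]\cdot\bigl({}^{(\phi_2,\theta_2)}[\alpha]\bigr)^{-1}$; using that the action is by automorphisms this distributes to ${}^{(\phi_1,\theta_1)}[\alpha]\cdot\bigl({}^{(\phi_1,\theta_1)(\phi_2,\theta_2)}[\alpha]\bigr)^{-1}$. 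Multiplying by $\Theta(\phi_1,\theta_1)=[\alpha]\cdot\bigl({}^{(\phi_1,\theta_1)}[\alpha]\bigr)^{-1}$, the middle terms $\bigl({}^{(\phi_1,\theta_1)}[\alpha]\bigr)^{-1}\cdot{}^{(\phi_1,\theta_1)}[\alpha]$ cancel, leaving the left hand side. Hence $\Theta\in\Z^1\bigl(\Aut(X)\times\Aut(A);\Ho^2(X;A)\bigr)$.

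\textbf{Step 3 (change of base cocycle).} Let $\alpha,\alpha'\in\Z^2(X;A)$ and denote the corresponding maps by $\Theta=\Theta_{[\alpha]}$ and $\Theta'=\Theta_{[\alpha']}$. Set $[\gamma]:=[\alpha']\cdot[\alpha]^{-1}\in\Ho^2(X;A)$. Using Step 1 on both maps and the fact that ${}^{(\phi,\theta)}$ is a group automorphism of $\Ho^2(X;A)$,
$$\Theta'(\phi,\theta)\cdot\Theta(\phi,\theta)^{-1}
\;=\;[\alpha']\cdot\bigl({}^{(\phi,\theta)}[\alpha']\bigr)^{-1}\cdot\bigl({}^{(\phi,\theta)}[\alpha]\bigr)\cdot[\alpha]^{-1}
\;=\;[\gamma]\cdot\bigl({}^{(\phi,\theta)}[\gamma]\bigr)^{-1}.$$
Setting $a:=[\gamma]^{-1}\in\Ho^2(X;A)$, this equals $\bigl({}^{(\phi,\theta)}a\bigr)\cdot a^{-1}$, which is precisely the inner derivation attached to $a$ in the sense recalled before Lemma \ref{1-cocycle-complement}. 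Thus $\Theta'$ and $\Theta$ differ by an inner derivation.

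\textbf{Expected difficulty.} There is no real obstacle: the content is almost entirely a careful use of the explicit formula in Step 1 combined with the automorphism property of the action. The only thing to be vigilant about is bookkeeping of inverses and the order of application of $(\phi_1,\theta_1)$ and $(\phi_2,\theta_2)$ in the action, since writing $\Theta(\phi,\theta)=[\alpha]\cdot\bigl({}^{(\phi,\theta)}[\alpha]\bigr)^{-1}$ rather than its inverse is the convention forced by \eqref{abelian-wells-map}.
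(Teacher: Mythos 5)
Your proposal is correct, and for the first assertion it takes a genuinely different route from the paper. You begin by solving the defining equation \eqref{abelian-wells-map} explicitly to get $\Theta(\phi,\theta)=[\alpha]\cdot\bigl({}^{(\phi,\theta)}[\alpha]\bigr)^{-1}$ and then verify the cocycle identity $\Theta(g_1g_2)=\Theta(g_1)\cdot{}^{g_1}\Theta(g_2)$ by direct cancellation, using only Proposition \ref{aut-action-by-auto-coho}. The paper instead argues structurally: it forms the semidirect product $\Ho^2(X;A)\rtimes\bigl(\Aut(X)\times\Aut(A)\bigr)$, shows that the stabiliser of $[\alpha]$ is a complement of $\Ho^2(X;A)$ there (using that $\Ho^2(X;A)$ acts freely and transitively on itself), and invokes the classical bijection between complements and derivations (Lemma \ref{1-cocycle-complement}) to identify $\Theta$ with the derivation attached to that complement. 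Your computation is shorter and more self-contained; the paper's argument explains \emph{why} $\Theta$ is a derivation (it is the derivation corresponding to the stabiliser complement, which also ties $\Theta^{-1}(1)$ to the stabiliser appearing in Theorem \ref{abelian-main-theorem}) and would survive in a setting where one only has a free transitive action rather than an explicit abelian group law. For the second assertion the two arguments essentially coincide: the paper's element $[\beta]$ with ${}^{[\beta]}[\alpha']=[\alpha]$ is exactly your $a=[\gamma]^{-1}=[\alpha][\alpha']^{-1}$, and both land on $\Theta'(\phi,\theta)\Theta(\phi,\theta)^{-1}=\bigl({}^{(\phi,\theta)}a\bigr)a^{-1}$, the inner derivation attached to $a$. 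Your bookkeeping of inverses and of the order of the group elements in the action is consistent throughout, so there is no gap.
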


\begin{proof}
Suppose that $\Theta=\Theta_{[\alpha]}$ for $[\alpha] \in \Ho^2(X; A)$ and $\bold{g} \in \Ho^2(X; A) \rtimes \big( \Aut(X) \times \Aut(A)\big)$. Then for elements $[\alpha], ~^{\bold{g}}[\alpha] \in \Ho^2(X; A)$, there exists a unique $[\beta] \in \Ho^2(X; A)$ such that $~^{[\beta]}[\alpha]=~^{\bold{g}}[\alpha]$. This shows that $[\beta]^{-1}\bold{g} \in \mathbb{H}$, the stabiliser subgroup of $\Ho^2(X; A) \rtimes \big( \Aut(X) \times \Aut(A) \big)$ at $[\alpha]$, and hence
$$ \Ho^2(X; A) \rtimes \big( \Aut(X) \times \Aut(A) \big)= \Ho^2(X; A)\mathbb{H}.$$ Further, since $\Ho^2(X; A)$ acts freely on itself, it follows that $\mathbb{H}$ is the complement of $\Ho^2(X; A)$ in the group $\Ho^2(X; A) \rtimes \big(\Aut(X) \times \Aut(A) \big)$. By Lemma \ref{1-cocycle-complement}, let $f:\Aut(X) \times \Aut(A) \to \Ho^2(X; A)$ be the unique derivation corresponding to the complement $\mathbb{H}$ of $\Ho^2(X; A)$ in $\Ho^2(X; A) \rtimes \big(\Aut(X) \times \Aut(A)\big)$. Then 
$$\mathbb{H}= \big\{f(\phi, \theta)(\phi, \theta)~|~(\phi, \theta) \in \Aut(X) \times \Aut(A) \big\},$$
that is, $$[\alpha]=~^{f(\phi, \theta)(\phi, \theta)}[\alpha]=~^{f(\phi, \theta)}{\big(^{(\phi, \theta)}[\alpha]\big)}.$$
Now, by definition of $\Theta$ as in \eqref{abelian-wells-map}, we obtain $f(\phi, \theta)=\Theta (\phi, \theta)$, and hence $\Theta$ is a derivation.
\par

Let $\Theta=\Theta_{[\alpha]}$ and $\Theta'=\Theta'_{[\alpha']}$, where $[\alpha], [\alpha'] \in \Ho^2(X; A)$. Then for any $(\phi, \theta) \in \Aut(X) \times \Aut(A)$, we have 
$$ ^{\Theta(\phi, \theta)} \big(^{(\phi, \theta)}[\alpha] \big)= [\alpha]~\textrm{and}~ ^{\Theta'(\phi, \theta)} \big(^{(\phi, \theta)}[\alpha'] \big)= [\alpha'].$$
Since $\Ho^2(X; A)$ acts transitively on itself by left multiplication, there exists a unique $[\beta] \in \Ho^2(X; A)$ such that $ ^{[\beta]}[\alpha']=[\alpha]$. This gives 
$$ ^{[\beta]^{-1}}{\big(^{\Theta(\phi, \theta)} \big(^{^{(\phi, \theta)}{[\beta]}}{\big(^{(\phi, \theta)}[\alpha'] \big)} \big)\big)}= ^{\Theta'(\phi, \theta)} \big(^{(\phi, \theta)}[\alpha'] \big).$$
Since $[\beta]^{-1} \Theta(\phi, \theta) {^{(\phi, \theta)}{[\beta]}}$, $\Theta'(\phi, \theta)$ and  $^{(\phi, \theta)}[\alpha']$ all lie in $\Ho^2(X; A)$, which acts freely on itself, we must have $$[\beta]^{-1} \Theta(\phi, \theta) {^{(\phi, \theta)}{[\beta]}}= \Theta'(\phi, \theta).$$
Thus, $\Theta$ and $\Theta'$ differ by an inner derivation, which completes the proof.
\end{proof}
\medskip

\section{Adjoint functors between categories of quandles and groups}\label{adjoint-functor-section}
Let $\mathcal{Q}$ and $\mathcal{G}$ denote the category of quandles and groups, respectively.  Let $w = w(x,y)$ be a word in the free group $F(x,y)$ on generators $x$ and $y$. For any group $G$, the word $w$ defines a map $G\times G\to G$ given by $(g, h) \mapsto w(g, h)$  for $g, h \in G$. Setting $$g * h = w(g, h)$$ gives an algebraic system $(Q_w(G), *)$. If $w(x,y) = y^{-n} x y^n$ for some integer $n$, then $Q_w(G)$ is the $n$-th conjugation quandle $\Conj_n(G)$, and if $w(x,y) = y x^{-1} y$, then $Q_w(G)$ is the core quandle $\Core(G)$. In fact, the following result \cite[Proposition 3.1]{BarTimSin2019} shows that these are all the possibilities.

\begin{proposition}\label{p7.2}
Let $w=w(x,y)\in F(x,y)$ be such that $Q_w(G)$ is a quandle for every group $G$. Then $w(x,y) = y x^{-1} y$ or $y^{-n} x y^n$ for some $n \in \mathbb{Z}$.
\end{proposition}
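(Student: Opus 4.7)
The plan is to test the three quandle axioms against carefully chosen groups in order to cut down the possibilities for $w$. Axioms (Q1) and (Q2), applied to $G=\mathbb{Z}$, will restrict the exponent sums of $x$ and $y$ in $w$ to exactly two possibilities; axiom (Q3), interpreted as a word identity in the free group $F(x,y,z)$, will then pin down the reduced form of $w$ in each case.

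Let $\epsilon_x$ and $\epsilon_y$ denote the exponent sums of $x$ and $y$ in $w$. Evaluated in $\mathbb{Z}$, the word $w(x,y)$ becomes $\epsilon_x x+\epsilon_y y$, so (Q1) in $\mathbb{Z}$ gives $\epsilon_x+\epsilon_y=1$. Next, (Q2) in $\mathbb{Z}$ says the affine map $z\mapsto \epsilon_x z+\epsilon_y y$ must be a bijection of $\mathbb{Z}$ for every $y$; since its image is $\epsilon_x\mathbb{Z}+\epsilon_y y$, this forces $\epsilon_x\in\{+1,-1\}$. The two constraints together leave only the cases $(\epsilon_x,\epsilon_y)=(1,0)$ and $(\epsilon_x,\epsilon_y)=(-1,2)$, which are precisely the exponent-sum signatures of $y^{-n}xy^n$ and $yx^{-1}y$.

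In each case, I would write $w=y^{a_0}x^{b_1}y^{a_1}\cdots x^{b_k}y^{a_k}$ in reduced form with $k\geq 1$, and then exploit (Q3) as a word identity in the free group $F(x,y,z)$ by comparing the reduced forms of $w(w(x,y),z)$ and $w(w(x,z),w(y,z))$ syllable-by-syllable. For Case A, $(\epsilon_x,\epsilon_y)=(1,0)$, this comparison would force $k=1$, $b_1=1$, and $a_0+a_1=0$; setting $n=-a_0$ then gives $w=y^{-n}xy^n$. For Case B, $(\epsilon_x,\epsilon_y)=(-1,2)$, specialising (Q3) first at $z=1$ yields the palindromic identity $w(x,y)^{-1}=w(x^{-1},y^{-1})$, and combined with the full (Q3) analysis this forces $k=1$, $b_1=-1$, and $a_0=a_1=1$, giving $w=yx^{-1}y$. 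Conversely, both families are already known to define quandles in every group from the examples in Section~\ref{sec-prelim}, closing the equivalence.

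The main obstacle is the last step: forcing $k=1$ from (Q3) is delicate because substituting $w$ into itself and reducing in the free group can create overlapping syllables and spurious cancellations. A cleaner alternative I would try is to apply the Fox derivative $\partial_x$ to both sides of (Q3), obtaining an identity of the form $(\partial_x w)(w(x,y),z)\cdot(\partial_x w)(x,y)=(\partial_x w)(w(x,z),w(y,z))\cdot(\partial_x w)(x,z)$ in the group ring $\mathbb{Z}[F(x,y,z)]$; analysing its $x$-content would constrain $\partial_x w$ to be a monomial, which in turn forces $x$ to appear exactly once in $w$, after which the exponent-sum data determines $w$ uniquely.
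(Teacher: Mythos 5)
First, note that the paper does not actually prove Proposition \ref{p7.2}; it imports it from \cite[Proposition 3.1]{BarTimSin2019}, so there is no internal argument to compare yours against. On its own terms, the first stage of your proposal is correct and complete: (Q1) for all groups is equivalent to the word identity $w(x,x)=x$ in the free group, hence to $\epsilon_x+\epsilon_y=1$, and (Q2) tested on $\mathbb{Z}$ forces $\epsilon_x=\pm1$, leaving exactly the signatures $(1,0)$ and $(-1,2)$. The problem is that everything after that --- the entire content of the proposition --- is deferred. You assert that a syllable-by-syllable comparison of $w(w(x,y),z)$ and $w(w(x,z),w(y,z))$ in $F(x,y,z)$ ``would force'' $k=1$ and the stated exponents, but you give no mechanism for controlling the cascading cancellations you yourself flag as the obstacle; note that all the cheap specialisations ($x\mapsto 1$, $y\mapsto 1$, $z\mapsto 1$, $z\mapsto y$) and even the free nilpotent quotient of class $2$ are identically satisfied in the case $(\epsilon_x,\epsilon_y)=(1,0)$, so the forcing really does require a genuine normal-form/cancellation argument (e.g.\ writing $w$ in the basis $y^{-i}xy^{i}$ of the normal closure of $x$) that is absent here.

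The proposed Fox-derivative shortcut cannot close this gap. Applying $\partial_x$ to (Q3) via the chain rule and abelianising (which is what ``analysing the $x$-content'' amounts to) gives, in the case $(\epsilon_x,\epsilon_y)=(1,0)$, the identity $(\partial_x w)(X,Z)\cdot(\partial_x w)(X,Y)=(\partial_x w)(X,Y)\cdot(\partial_x w)(X,Z)$ in the commutative ring $\mathbb{Z}[X^{\pm1},Y^{\pm1},Z^{\pm1}]$, which holds for \emph{every} word $w$ with that signature: the identity is vacuous exactly in the main case. More fundamentally, abelianised Fox derivatives detect only the free metabelian quotient $F(x,y)/F''$, so they cannot distinguish $w=x$ from $w=xc$ with $c\in F(x,y)''$; any such $w$ passes every test of this kind yet is not of the form $y^{-n}xy^{n}$, so no amount of Fox-derivative bookkeeping can ``constrain $x$ to appear exactly once.'' If instead you keep the non-abelianised derivative in $\mathbb{Z}[F(x,y,z)]$, then a single-term $\partial_x w$ would indeed force one occurrence of $x$ (the terms of $\partial_x w$ are supported on distinct prefixes of the reduced word, so no cancellation occurs), but extracting single-termness from the resulting noncommutative identity is precisely the free-group cancellation analysis you were trying to avoid. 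So the proposal correctly reduces the problem and correctly identifies the target normal forms, but the decisive step is missing and the suggested workaround provably does not work.
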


For a word $w = w(x,y)\in F(x,y)$ and a quandle $X \in \mathcal{Q}$, we define the group
$$\Adj_w(X)= \big\langle e_x, ~x \in X~\mid~e_{x*y}=w(e_x, e_y),~~x, y \in X \big\rangle.$$
We note that if $w(x,y)=y^{-1}xy$, then $\Adj_w(X)$ is simply denoted by $\Adj(X)$ and referred as the {\it adjoint group} of the quandle $X$ in the literature.

\begin{proposition}\label{verbal-adjoint}
Let $w(x,y) = y x^{-1} y$ or $y^{-n} x y^n$ for some $n \in \mathbb{Z}$. Then $\Adj_w: \mathcal{Q} \to \mathcal{G}$ is a functor that is left adjoint to the functor $Q_w:\mathcal{G} \to \mathcal{Q}$.
\end{proposition}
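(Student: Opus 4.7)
The plan is to verify the adjunction $\Hom_{\mathcal{G}}(\Adj_w(X), G) \cong \Hom_{\mathcal{Q}}(X, Q_w(G))$ directly from the presentation of $\Adj_w(X)$, after first checking that $\Adj_w$ is actually a functor. The key observation is that the defining relations $e_{x*y} = w(e_x, e_y)$ of $\Adj_w(X)$ are tailored precisely to convert quandle homomorphisms $X \to Q_w(G)$ into group homomorphisms $\Adj_w(X) \to G$, so the adjunction ought to be a direct consequence of the universal property of a group given by generators and relations.

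First I would establish functoriality of $\Adj_w$. For a quandle morphism $f \colon X \to Y$, define $\Adj_w(f) \colon \Adj_w(X) \to \Adj_w(Y)$ on generators by $e_x \mapsto e_{f(x)}$; to see this extends to a well-defined group homomorphism, I would check that the image $e_{f(x*y)} = w(e_{f(x)}, e_{f(y)})$ of each defining relation holds in $\Adj_w(Y)$, which reduces to $f(x*y) = f(x)*f(y)$ combined with the defining relation in $\Adj_w(Y)$. Preservation of identities and compositions is then automatic from the definition on generators.

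Next, to exhibit the adjunction I would construct mutually inverse bijections explicitly. Given $\phi \in \Hom_{\mathcal{Q}}(X, Q_w(G))$, define $\tilde{\phi} \colon \Adj_w(X) \to G$ on generators by $\tilde{\phi}(e_x) = \phi(x)$; well-definedness is precisely the check that $\phi(x*y) = w(\phi(x), \phi(y))$, which holds since $\phi$ is a quandle map and the operation on $Q_w(G)$ is defined by $w$. Conversely, given $\psi \in \Hom_{\mathcal{G}}(\Adj_w(X), G)$, set $\hat{\psi}(x) = \psi(e_x)$; then
$$\hat{\psi}(x*y) = \psi(e_{x*y}) = \psi(w(e_x, e_y)) = w(\psi(e_x), \psi(e_y)) = \hat{\psi}(x) * \hat{\psi}(y),$$
so $\hat{\psi}$ is a quandle homomorphism into $Q_w(G)$.

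The final step is to observe that these assignments are mutually inverse, since they agree on the generating set $\{e_x \mid x \in X\}$ of $\Adj_w(X)$ and on every element of $X$, respectively, and that the bijections are natural in both variables by a routine diagram chase on generators. The only subtle point — and what I expect is the main thing to get right rather than a serious obstacle — is ensuring that I use no property of $w$ beyond what its presentation encodes, so that the argument goes through uniformly for both admissible forms $w = yx^{-1}y$ and $w = y^{-n}xy^n$; Proposition \ref{p7.2} is what guarantees these are the only cases for which $Q_w$ is a functor $\mathcal{G} \to \mathcal{Q}$ in the first place, so the very formulation of the adjunction is legitimate.
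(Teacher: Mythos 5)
Your proposal is correct and follows essentially the same route as the paper: establish functoriality of $\Adj_w$ by checking the defining relations on generators, then construct the mutually inverse maps $\phi \mapsto \widetilde{\phi}$ and $f \mapsto \widehat{f}$ via the universal property of the presentation, exactly as the paper does (the paper also records the easy check that $Q_w(f)$ is a quandle homomorphism, and, like you, leaves naturality as a routine verification).
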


\begin{proof}
Let $X, Y \in \mathcal{Q}$ and $\phi \in \Hom (X, Y)$ a quandle homomorphism. Then we obtain a group homomorphism
$\Adj_w(\phi): F(X) \to \Adj_w(Y)$ given by $\Adj_w(\phi)(e_x)= e_{\phi(x)}$, where $F(X)$ is the free group on the set $\{e_x~|~x \in X\}$. Further, for $x, y \in X$, we have
\begin{eqnarray*}
\Adj_w(\phi)(e_{x*y})& =& e_{\phi(x)*\phi(y)}\\
& =& w(e_{\phi(x)}, e_{\phi(y)})\\
& =& \Adj_w(\phi) \big(w(e_x, e_y)\big),
\end{eqnarray*}
and hence we obtain a group homomorphism $\Adj_w(\phi): \Adj_w(X) \to \Adj_w(Y).$
\par

Conversely, let $G, H \in \mathcal{G}$ and $f \in \Hom (G, H)$. Defining $Q_w(f): Q_w(G) \to Q_w(H)$ by $Q_w(f)(a)=f(a)$, we see that
\begin{eqnarray*}
Q_w(f)(a*b)& =& f(a*b)\\
& =& f\big(w(a, b)\big)\\
& =& w\big(f(a), f(b)\big)\\
& =& f(a)*f(b)\\
& =& Q_w(f)(a)*Q_w(f)(b)
\end{eqnarray*}
for all $a, b \in G$. Thus, $Q_w(f) \in \Hom \big(Q_w(G), Q_w(H)\big)$. A direct check shows that both $\Adj_w$ and $Q_w$ are functors.
\par

Let $X \in \mathcal{Q}$, $G \in \mathcal{G}$ and $\phi \in \Hom \big(X, Q_w(G)\big)$ a quandle homomorphism. Define $\widetilde{\phi}: F(X) \to G$ by setting $\widetilde{\phi}(e_x)= \phi(x)$. Then, for $x, y \in X$, we have
\begin{eqnarray*}
\widetilde{\phi}(e_{x*y})& =& \phi(x*y)\\
& =& \phi(x)*\phi(y)\\
& =& w\big(\phi(x), \phi(y)\big)\\
& =& \widetilde{\phi}\big(w(e_{x}, e_{y})\big),
\end{eqnarray*}
and hence $\widetilde{\phi} \in \Hom \big(\Adj_w(X), G\big)$. Similarly, let $f \in \Hom \big(\Adj_w(X), G\big)$ be a group homomorphism. Define $\widehat{f}: X \to Q_w(G)$ by setting $\widehat{f}(x)= f(e_x)$. Then, for $x, y \in X$, we have
\begin{eqnarray*}
\widehat{f}(x*y)& =& f(e_{x*y})\\
& =& f\big(w(e_x, e_y)\big)\\
& =& w\big(f(e_x), f(e_y)\big)\\
& =& w\big(\widehat{f}(x), \widehat{f}(y)\big)\\
& =& \widehat{f}(x) * \widehat{f}(y)
\end{eqnarray*}
and hence $\widehat{f} \in \Hom \big(X, Q_w(G)\big)$. It follows easily that $\widehat{\widetilde{\phi}}=\phi$ and $\widetilde{\widehat{f}}=f$ which proves that $\Adj_w$ is left adjoint to $Q_w$.
\end{proof}

Let $\mathcal{Q}'$ denote the category of pairs $(X, \phi)$, where $X \in \mathcal{Q}$ and $\phi \in \Aut(X)$. If $(X_1, \phi_1), (X_2, \phi_2) \in \mathcal{Q}'$, then a morphism from $(X_1, \phi_1)$ to $(X_2, \phi_2)$ is a quandle homomorphism $\psi:X_1 \to X_2$ such that $$\phi_2 \psi= \psi \phi_1.$$ In case of groups, the category $\mathcal{G}'$ is defined analogously. Each object $(G, f) \in \mathcal{G}'$ gives an object $\big(\Alex_f(G),  \Alex(f) \big)$, where $\Alex(f)$ acts as $f$. On the other hand, for $(X, \phi) \in \mathcal{Q}'$, we define the group
$$\Adj_\phi(X)= \big\langle e_x, ~x \in X~\mid~e_{x*y}=e_{\phi(x)}e_{\phi(y)}^{-1}e_{y},~~x, y \in X \big\rangle.$$
Further, the map $\phi$ extends to an automorphism $\Adj(\phi)$ of the free group $F(X)$. For $x, y \in X$, we have 
\begin{eqnarray*}
\Adj(\phi)(e_{x*y})& =& e_{\phi(x)*\phi(y)}\\
& =& e_{\phi^2(x)}e_{\phi^2(y)}^{-1}e_{\phi(y)}\\
& =& \Adj(\phi)(e_{\phi(x)}e_{\phi(y)}^{-1}e_{y}),
\end{eqnarray*}
and hence $\big(\Adj_\phi(X), \Adj(\phi)\big) \in \mathcal{G}'$. It follows easily that both $\Alex$ and $\Adj$ are functors. Imitating the proof of Proposition \ref{verbal-adjoint}, we obtain

\begin{proposition}\label{alexander-adjoint}
The functor $\Adj: \mathcal{Q}' \to \mathcal{G}'$ is left adjoint to the functor $\Alex:\mathcal{G}' \to \mathcal{Q}'$.
\end{proposition}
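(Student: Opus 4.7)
The plan is to mimic the proof of Proposition \ref{verbal-adjoint}, with the extra bookkeeping that every map must intertwine the prescribed automorphisms. First I would verify functoriality of $\Adj$ and $\Alex$ on morphisms: given a $\mathcal{Q}'$-morphism $\psi:(X_1,\phi_1)\to (X_2,\phi_2)$, the induced map $\Adj(\psi)(e_x)=e_{\psi(x)}$ on free groups descends to $\Adj_{\phi_1}(X_1)\to \Adj_{\phi_2}(X_2)$ because $\psi$ respects the quandle operation, and it satisfies $\Adj(\phi_2)\circ\Adj(\psi)=\Adj(\psi)\circ\Adj(\phi_1)$ on generators from the relation $\phi_2\psi=\psi\phi_1$. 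The analogous check for $\Alex(f)$ applied to a group morphism commuting with the chosen automorphisms is immediate.

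Next I would build the adjunction bijection. Given $(X,\phi_X)\in\mathcal{Q}'$, $(G,f)\in\mathcal{G}'$, and a $\mathcal{Q}'$-morphism $\psi:(X,\phi_X)\to \Alex(G,f)=(\Alex_f(G),f)$, define $\widetilde{\psi}:F(X)\to G$ on generators by $\widetilde{\psi}(e_x)=\psi(x)$. The crucial verification is that $\widetilde{\psi}$ respects the defining relation of $\Adj_{\phi_X}(X)$: using the generalised Alexander operation and the intertwining $f\circ\psi=\psi\circ\phi_X$,
\begin{equation*}
\widetilde{\psi}(e_{\phi_X(x)}e_{\phi_X(y)}^{-1}e_y)=\psi(\phi_X(x))\,\psi(\phi_X(y))^{-1}\psi(y)=f\bigl(\psi(x)\psi(y)^{-1}\bigr)\psi(y)=\psi(x*y)=\widetilde{\psi}(e_{x*y}).
\end{equation*}
Hence $\widetilde{\psi}:\Adj_{\phi_X}(X)\to G$ is well defined, and the identity $f\circ\widetilde{\psi}(e_x)=f(\psi(x))=\psi(\phi_X(x))=\widetilde{\psi}(\Adj(\phi_X)(e_x))$ on generators shows $\widetilde{\psi}$ is a morphism in $\mathcal{G}'$.

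Conversely, given a $\mathcal{G}'$-morphism $h:\Adj(X,\phi_X)\to (G,f)$, I would set $\widehat{h}(x):=h(e_x)$ and verify that $\widehat{h}$ is a morphism $(X,\phi_X)\to\Alex(G,f)$ in $\mathcal{Q}'$. Using the commutation $f\circ h=h\circ\Adj(\phi_X)$ gives $h(e_{\phi_X(x)})=f(\widehat{h}(x))$, so
\begin{equation*}
\widehat{h}(x*y)=h(e_{\phi_X(x)})h(e_{\phi_X(y)})^{-1}h(e_y)=f\bigl(\widehat{h}(x)\widehat{h}(y)^{-1}\bigr)\widehat{h}(y)=\widehat{h}(x)*\widehat{h}(y)
\end{equation*}
in $\Alex_f(G)$, and the intertwining $f\circ\widehat{h}=\widehat{h}\circ\phi_X$ follows from the same commutation.

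Finally I would observe that $\widehat{\widetilde{\psi}}=\psi$ and $\widetilde{\widehat{h}}=h$ by evaluation on points of $X$ and on generators $e_x$, respectively, and that these correspondences are natural in both variables because they are defined purely in terms of values on generators. The main obstacle is the compatibility check in the second step: one must use both the quandle relation in $\Alex_f(G)$ and the intertwining condition $f\circ\psi=\psi\circ\phi_X$ simultaneously to see that $\widetilde{\psi}$ kills the relator $e_{x*y}e_y^{-1}e_{\phi_X(y)}e_{\phi_X(x)}^{-1}$; everything else is essentially formal once the definitions are unwound.
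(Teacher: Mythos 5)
Your proposal is correct and follows exactly the route the paper intends: the paper's own proof is the single remark that one imitates the proof of Proposition \ref{verbal-adjoint}, and your argument is precisely that imitation, with the additional (correct) verifications that the intertwining conditions $f\circ\psi=\psi\circ\phi_X$ and $f\circ h=h\circ\Adj(\phi_X)$ make $\widetilde{\psi}$ kill the relator and make $\widehat{h}$ a quandle homomorphism into $\Alex_f(G)$. No gaps.
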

\medskip

\section{From group extensions to quandle extensions}\label{groups-to-quandles-extensions-section}
We show that extensions of groups give rise to extensions of quandles by applying the functors described in Section \ref{adjoint-functor-section}.

\begin{proposition}\label{core-conj-prop}
Let $1 \to A \to E \to G \to 1$ be an extension of groups and $w(x,y) = y x^{-1} y$ or $y^{-n} x y^n$ for some $n \in \mathbb{Z}$. Then $ Q_w(E) \cong Q_w(G) \times_\alpha Q_w(A)$ for some dynamical 2-cocycle $\alpha$. 
\end{proposition}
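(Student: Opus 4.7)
The approach is a transport-of-structure argument using a set-theoretic splitting of the group extension. Since $A$ is normal in $E$ with quotient $G$, I pick a set section $s \colon G \to E$ of the projection $\pi \colon E \to G$ (normalised so that $s(1_G) = 1_E$, for convenience). Every element of $E$ is then uniquely of the form $a \cdot s(g)$ with $(g, a) \in G \times A$, so we obtain a bijection
$$\varphi \colon G \times A \longrightarrow E, \qquad \varphi(g, a) = a \cdot s(g).$$
The plan is to pull the quandle operation of $Q_w(E)$ back along $\varphi$ and recognise the result as an Andruskiewitsch--Grana dynamical-cocycle extension of $Q_w(G)$ by $A$.

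The crucial observation is that $\pi \colon E \to G$ is a group homomorphism, and since the operations $*$ on $Q_w(E)$ and $Q_w(G)$ are both defined by the same group-word $w$, any group homomorphism automatically respects $*$. Thus $\pi$ descends to a quandle homomorphism $Q_w(E) \to Q_w(G)$. Applying $\pi$ to the product $\varphi(g_1, a_1) * \varphi(g_2, a_2)$ therefore yields $g_1 * g_2$ computed in $Q_w(G)$, while $\pi \circ \varphi(g, a) = g$. Together these force
$$\varphi(g_1, a_1) * \varphi(g_2, a_2) = \varphi\bigl(g_1 * g_2,\; \alpha_{g_1, g_2}(a_1, a_2)\bigr)$$
for a uniquely determined map $\alpha \colon G \times G \to \Maps(A \times A, A)$, namely the one obtained by reading off the $A$-component of the product on the left-hand side via $\varphi^{-1}$.

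Because $\varphi$ is a bijection and $Q_w(E)$ is already a quandle, the pulled-back operation on $G \times A$ is itself a quandle structure of precisely the shape (\ref{dynamical-quandle-operation}) with $X = Q_w(G)$ and $S = A$. By the special case of Proposition \ref{set-cocycle} recorded by Andruskiewitsch and Grana (where $\beta_{s,t}(x,y) = x*y$), this is equivalent to $\alpha$ satisfying the dynamical 2-cocycle conditions (\ref{dynamical-cocycle-condition1})--(\ref{dynamical-cocycle-condition3}), so $\alpha \in \mathcal{Z}^2(Q_w(G); A)$. The set $S = A$ carries its natural quandle structure $Q_w(A)$, which is recovered here as the fibre $\pi^{-1}(1_G)$: since $A$ is a subgroup of $E$, the induced operation on this fibre agrees on the nose with that of $Q_w(A)$ when $s(1_G) = 1_E$. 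Hence $\varphi$ furnishes the desired isomorphism $Q_w(G) \times_\alpha Q_w(A) \cong Q_w(E)$.

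I do not foresee any real obstacle: once $s$ is chosen, the argument is entirely formal, the only substantive point being the first-coordinate identity, which is immediate from $\pi$ respecting the word $w$. An explicit formula for $\alpha$ can be written down in each of the two cases $w(x,y) = y^{-n} x y^n$ and $w(x,y) = y x^{-1} y$ in terms of the group 2-cocycle determined by $s$, but this calculation is not needed for the mere existence claim.
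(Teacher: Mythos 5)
Your proposal is correct and follows essentially the same route as the paper: choose a normalised set-theoretic section of $\pi$, use the resulting bijection $G\times A\to E$, observe that $\pi$ induces a surjective quandle homomorphism $Q_w(E)\to Q_w(G)$ so that the transported operation has the dynamical-extension shape, invoke the Andruskiewitsch--Gra\~na correspondence to conclude $\alpha$ is a dynamical $2$-cocycle, and identify the fibre over the identity with $Q_w(A)$. The paper additionally computes $\alpha$ explicitly in both cases $w=yx^{-1}y$ and $w=y^{-n}xy^{n}$ in terms of the group factor set $\mu$, but for the bare existence statement your shortcut of identifying the fibre $\pi^{-1}(1)$ with the subquandle $A\subseteq Q_w(E)$ is a clean and valid replacement for that calculation.
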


\begin{proof}
Let $\pi:E \to G$ be the quotient homomorphism and $\kappa: G \to E$ a transversal such that $\kappa(1)=1$. By functoriality, 
we have a surjective quandle homomorphism $Q_w(\pi): Q_w(E) \to Q_w(G)$ such that
$$\pi \big (\kappa(x)*\kappa(y) \big)= Q_w(\pi) \big (\kappa(x)*\kappa(y) \big)= x*y= Q_w(\pi) \big (\kappa(x*y) \big)= \pi \big (\kappa(x*y) \big)$$
for all $x, y\in G$. Thus, there exists a unique element, say, $\mu(x, y) \in A$ such that $$\kappa(x)*\kappa(y)= \kappa(x*y)~\mu(x, y).$$
Note that $\mu(x, x)=1$ for all $x \in G$. Every element of $E$ can be written uniquely as $\kappa(x)s$ for some $x \in G$ and $s \in A$. For each $x \in G$, there is a bijection $\gamma_x: Q_w(\pi)^{-1}(x) \to A$ given by $\gamma_x\big(\kappa(x)s\big)=s$.  Hence, by \cite[Corollary 2.5]{Andruskiewitsch2003}, we obtain
$$ Q_w(E) \cong Q_w(G) \times_\alpha A,$$ where $\alpha$ is the dynamical 2-cocycle given by $\alpha_{x, y}(s, t)= \gamma_{x*y} \big(\gamma_x^{-1}(s)* \gamma_y^{-1}(t) \big)$ for $x, y \in G$ and $s, t \in A$. 
\par
Case 1. If $w(x,y) = y x^{-1} y$, then
\begin{eqnarray*}
\alpha_{x, y}(s, t) &=& \gamma_{x*y} \big(\gamma_x^{-1}(s)* \gamma_y^{-1}(t) \big)\\
 &=& \gamma_{x*y} \big( \big(\kappa(x)s \big)* \big(\kappa(y)t \big)\big)\\
&=& \gamma_{x*y} \big( \kappa(y)t ~s^{-1}\kappa(x)^{-1} ~\kappa(y)t \big)\\
&=& \gamma_{x*y} \big( \kappa(y)\kappa(x)^{-1} \kappa(y) ~^{\kappa(x)^{-1} \kappa(y)}(t s^{-1}) ~t \big)\\
&=& \gamma_{x*y} \big( \big(\kappa(x)*\kappa(y)\big) ~^{\kappa(x)^{-1} \kappa(y)}(t s^{-1}) ~t \big)\\
&=& \gamma_{x*y} \big( \kappa(x*y)~\mu(x, y) ~^{\kappa(x)^{-1} \kappa(y)}(t s^{-1}) ~t \big)\\
&=&  \mu(x, y) ~^{\kappa(x)^{-1} \kappa(y)}(t s^{-1}) ~t.
\end{eqnarray*}
Recall that the quandle operation in $Q_w(G) \times_\alpha A$ is given by $(x, s)*(y, t)=\big(x*y, \alpha_{x, y}(s, t)\big)$ for $x, y \in G$ and $s, t \in A$. For each fixed $x \in G$, we have $$(x, s)*(x, t)=\big(x, \alpha_{x, x}(s, t)\big)=(x, ts^{-1}t)=(x, s*t),$$ which agrees with the quandle operation in $Q_w(A)$.
\par

Case 2. If $w(x,y) = y^{-n} x y^n$ for some $n \in \mathbb{Z}$, then
\begin{eqnarray*}
\alpha_{x, y}(s, t) &=& \gamma_{x*y} \big( \gamma_x^{-1}(s)* \gamma_y^{-1}(t) \big)\\
 &=& \gamma_{x*y} \big( \big(\kappa(x)s \big)* \big(\kappa(y)t \big)\big)\\
&=& \gamma_{x*y} \big( (\kappa(y)t)^{n} (\kappa(x)s) (\kappa(y)t)^{-n} \big)\\
&=& \gamma_{x*y} \Big( \big(\kappa(y)^n\kappa(x)\kappa(y)^{-n}\big)~\big(^{\kappa(y)^n\kappa(x)^{-1}\kappa(y)^{-n+1}}t\big)~\big(^{\kappa(y)^n\kappa(x)^{-1}\kappa(y)^{-n+2}}t \big)\cdots\\
& & \cdots \big(^{\kappa(y)^n\kappa(x)^{-1}}t \big)~\big(^{\kappa(y)^n}{(st^{-1})} \big)~\big(^{\kappa(y)^{n-1}}{t^{-1}} \big) \cdots \big(^{\kappa(y)^2}{t^{-1}} \big)~\big(^{\kappa(y)}{t^{-1}} \big) \Big)\\
&=& \gamma_{x*y} \Big( \big(\kappa(x)* \kappa(y) \big)~\big(^{\kappa(y)^n\kappa(x)^{-1}\kappa(y)^{-n+1}}t\big)~\big(^{\kappa(y)^n\kappa(x)^{-1}\kappa(y)^{-n+2}}t \big)\cdots\\
& & \cdots \big(^{\kappa(y)^n\kappa(x)^{-1}}t \big)~\big(^{\kappa(y)^n}{(st^{-1})} \big)~\big(^{\kappa(y)^{n-1}}{t^{-1}} \big) \cdots \big(^{\kappa(y)^2}{t^{-1}} \big)~\big(^{\kappa(y)}{t^{-1}} \big) \Big)\\
&=&  \mu(x, y) ~ \Big(\big(^{\kappa(y)^n\kappa(x)^{-1}\kappa(y)^{-n+1}}t\big)~\big(^{\kappa(y)^n\kappa(x)^{-1}\kappa(y)^{-n+2}}t \big)\cdots\\
& & \cdots \big(^{\kappa(y)^n\kappa(x)^{-1}}t \big)~\big(^{\kappa(y)^n}{(st^{-1})} \big)~\big(^{\kappa(y)^{n-1}}{t^{-1}} \big) \cdots \big(^{\kappa(y)^2}{t^{-1}} \big)~\big(^{\kappa(y)}{t^{-1}} \big) \Big).
\end{eqnarray*}
Taking $x=y=1$ and using the facts that $\mu(1, 1)=1=\kappa(1)$, we obtain $$(1, s)*(1, t)=\big(1, \alpha_{1, 1}(s, t)\big)=\big(1, t^{n}st^{-n}\big)=(1, s*t),$$ which agrees with the quandle operation in $Q_w(A)$. Thus, $Q_w(E) \cong Q_w(G) \times_\alpha Q_w(A)$.
\end{proof}

An immediate consequence of Proposition \ref{core-conj-prop} is the following result which generalises \cite[Proposition 3.9]{IshiharaTamaru2016}.

\begin{corollary}
If $E=G \times A$, then 
\begin{eqnarray*}
\Core(E) &\cong& \Core(G) \times \Core(A)\\
\Conj_n(E) &\cong& \Conj_n(G) \times \Conj_n(A).
\end{eqnarray*}
\end{corollary}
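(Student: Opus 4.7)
The plan is to apply Proposition \ref{core-conj-prop} to the split extension $1 \to A \to G \times A \to G \to 1$ and show that the resulting dynamical 2-cocycle $\alpha$ is the trivial one, so that the extension quandle reduces to the direct product quandle.

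First I would choose the transversal $\kappa \colon G \to G \times A$ given by $\kappa(x)=(x,1)$. This is a group homomorphism, hence $\kappa(x)\ast\kappa(y)=\kappa(x\ast y)$ inside the quandle $Q_w(G\times A)$ for both choices of $w$; in the notation of the proof of Proposition \ref{core-conj-prop}, this forces the factor set $\mu(x,y)=1$ for all $x,y\in G$. Moreover, every element of $\kappa(G)$ lies in the subgroup $G\times\{1\}$, which commutes elementwise with $\{1\}\times A$, so conjugation by $\kappa(x)^{-1}\kappa(y)$ and by any power $\kappa(y)^n$ acts trivially on $A$.

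Next I would substitute these two facts ($\mu\equiv 1$ and trivial conjugation on $A$) into the two formulas for $\alpha_{x,y}(s,t)$ displayed in the two cases of the proof of Proposition \ref{core-conj-prop}. In Case~1 ($w=yx^{-1}y$) the expression collapses to $\alpha_{x,y}(s,t)=ts^{-1}t=s\ast t$ in $\Core(A)$. In Case~2 ($w=y^{-n}xy^n$) the telescoping product of conjugates of $t$ and $t^{-1}$ flanking $st^{-1}$ simplifies, after stripping the now-trivial conjugations, to $t^{n}(st^{-1})t^{-(n-1)}=t^{n}st^{-n}=s\ast t$ in $\Conj_n(A)$. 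In either case the dynamical 2-cocycle is the trivial one $\alpha_{x,y}(s,t)=s\ast t$, independent of $x$ and $y$.

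Therefore the quandle operation in $Q_w(G)\times_{\alpha}A$ becomes $(x,s)\ast(y,t)=(x\ast y,\, s\ast t)$, which is exactly the product quandle structure on $Q_w(G)\times Q_w(A)$. Combined with Proposition \ref{core-conj-prop} this gives the two isomorphisms $\Core(E)\cong\Core(G)\times\Core(A)$ and $\Conj_n(E)\cong\Conj_n(G)\times\Conj_n(A)$. I do not expect any real obstacle here; the only mildly delicate point is verifying the telescoping simplification in Case~2, but once the conjugations are killed the product is a routine cancellation.
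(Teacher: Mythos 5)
Your argument is correct and is exactly the filling-in of what the paper leaves implicit: the paper offers no written proof, declaring the corollary an immediate consequence of Proposition \ref{core-conj-prop}, and your choice of the homomorphic transversal $\kappa(x)=(x,1)$ (forcing $\mu\equiv 1$ and trivial conjugation, hence the trivial cocycle $\alpha_{x,y}(s,t)=s*t$) is the intended route. No issues.
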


\begin{proposition}\label{Alex-prop}
Let $1 \to A \to E \to G \to 1$ be an extension of groups and $f \in \Aut(E)$ such that $f(A)=A$. If $f_1\in \Aut(G)$ and $f_2 \in \Aut(A)$ are the automorphisms induced by $f$,  then $\Alex_f(E) \cong \Alex_{f_1}(G) \times_\alpha \Alex_{f_2}(A)$ for some dynamical 2-cocycle $\alpha$. 
\end{proposition}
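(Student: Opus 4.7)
The plan is to mimic the proof of Proposition \ref{core-conj-prop} almost verbatim, replacing the word $w(x,y)$ by the Alexander-type binary operation $x*y = f(xy^{-1})y$ and keeping track of the induced automorphisms $f_1$ and $f_2$. Concretely, let $\pi : E \to G$ be the quotient homomorphism and fix a set-theoretic transversal $\kappa : G \to E$ with $\kappa(1)=1$, so that every element of $E$ has a unique expression $\kappa(x)s$ with $x \in G$ and $s \in A$. Since $f(A)=A$ and $f_1, f_2$ are the induced automorphisms, applying $\pi$ to $\kappa(x)*\kappa(y)=f(\kappa(x)\kappa(y)^{-1})\kappa(y)$ gives $\pi(\kappa(x)*\kappa(y)) = f_1(xy^{-1})y = x*y$ in $\Alex_{f_1}(G)$. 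Hence there is a unique element $\mu(x,y) \in A$ with
\[
\kappa(x)*\kappa(y) = \kappa(x*y)\,\mu(x,y),
\]
and $\mu(x,x)=1$ because $\kappa(x)*\kappa(x)=\kappa(x)$.

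Next I would define the bijections $\gamma_x : \pi^{-1}(x) \to A$ by $\gamma_x(\kappa(x)s) = s$ and invoke \cite[Corollary 2.5]{Andruskiewitsch2003} to conclude that $\Alex_f(E) \cong \Alex_{f_1}(G) \times_{\alpha} A$, where $\alpha_{x,y}(s,t) = \gamma_{x*y}\bigl(\gamma_x^{-1}(s)*\gamma_y^{-1}(t)\bigr)$. To compute $\alpha$ explicitly one expands
\[
(\kappa(x)s)*(\kappa(y)t) = f\bigl(\kappa(x)\kappa(y)^{-1}\cdot\kappa(y)st^{-1}\kappa(y)^{-1}\bigr)\kappa(y)t,
\]
and then uses that $\kappa(y)st^{-1}\kappa(y)^{-1} \in A$ (so $f$ acts on this factor as $f_2$) to multiplicatively separate the contribution of $f(\kappa(x)\kappa(y)^{-1}) = \kappa(x)*\kappa(y)\cdot\kappa(y)^{-1} = \kappa(x*y)\mu(x,y)\kappa(y)^{-1}$ from the $A$-part. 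After reorganising, one reads off a closed formula for $\alpha_{x,y}(s,t)$ of the form $\mu(x,y)\cdot\tau_{x,y}(s,t)$, where $\tau_{x,y}(s,t)$ is an explicit expression involving $f_2$ and conjugation by $\kappa(x),\kappa(y)$.

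Finally, to justify the identification of $S=A$ with $\Alex_{f_2}(A)$ on each fibre, I would specialise to $x=y=1$. Because $\kappa(1)=1$ and $\mu(1,1)=1$, the formula collapses to
\[
\alpha_{1,1}(s,t) = f_2(st^{-1})t = s*t \quad \text{in } \Alex_{f_2}(A),
\]
so the induced quandle operation $*_1$ on the fibre $A$ is precisely that of $\Alex_{f_2}(A)$. This completes the required isomorphism $\Alex_f(E) \cong \Alex_{f_1}(G) \times_\alpha \Alex_{f_2}(A)$.

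The main technical obstacle is the bookkeeping in the middle step: unlike the conjugation and core cases handled in Proposition \ref{core-conj-prop}, here one has to carefully separate the Alexander operation into a part that projects to $\Alex_{f_1}(G)$ and a part lying in $A$, and this requires repeatedly using both normality of $A$ in $E$ and the compatibility $f|_A = f_2$. Once this separation is carried out cleanly, the verification of the dynamical cocycle conditions \eqref{dynamical-cocycle-condition1}--\eqref{dynamical-cocycle-condition3} for $\alpha$ is automatic from the fact that $\Alex_f(E)$ is itself a quandle.
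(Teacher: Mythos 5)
Your proposal is correct and follows essentially the same route as the paper: the same transversal $\kappa$ with $\kappa(1)=1$, the same definition of $\mu$ via $\kappa(x)*\kappa(y)=\kappa(x*y)\mu(x,y)$, the same fibre bijections $\gamma_x$ feeding into \cite[Corollary 2.5]{Andruskiewitsch2003}, the same expansion $f\bigl(\kappa(x)\kappa(y)^{-1}\cdot\kappa(y)st^{-1}\kappa(y)^{-1}\bigr)\kappa(y)t$ using normality of $A$ and $f|_A=f_2$, and the same specialisation to $x=y=1$ to recognise the fibre quandle as $\Alex_{f_2}(A)$. The only difference is that you leave the intermediate closed form of $\alpha_{x,y}(s,t)$ implicit where the paper records it explicitly as $\mu(x,y)\,{}^{\kappa(y)^{-1}f(\kappa(y))}f_2(st^{-1})\,t$, which does not affect the argument.
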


\begin{proof}
Let $\pi: E \to G$, $\kappa:G \to E$ and $\mu: G \times G \to A$ be as in Proposition \ref{core-conj-prop}, that is, $\kappa(x)*\kappa(y)= \kappa(x*y)~\mu(x, y)$ for all $x, y \in G$. Note that $\Alex(\pi): \Alex_f(E) \to \Alex_{f_1}(G)$ is a surjective quandle homomorphism. For each $x \in G$, define a bijection $\gamma_x: \Alex(\pi)^{-1}(x) \to A$ by $\gamma_x \big(\kappa(x)s\big)=s$.  Again, by \cite[Corollary 2.5]{Andruskiewitsch2003}, we obtain
$$ \Alex_f(E) \cong \Alex_{f_1}(G) \times_\alpha A,$$ where $\alpha$ is the dynamical 2-cocycle given by $\alpha_{x, y}(s, t)= \gamma_{x*y} \big(\gamma_x^{-1}(s)* \gamma_y^{-1}(t) \big)$ for $x, y \in G$ and $s, t \in A$. We check that
\begin{eqnarray*}
\alpha_{x, y}(s, t) &=& \gamma_{x*y} \big( \big(\kappa(x)s \big)* \big(\kappa(y)t \big)\big)\\
&=& \gamma_{x*y} \Big(f \big( \kappa(x)s t^{-1}\kappa(y)^{-1} \big)\big(\kappa(y)t \big) \Big)\\
&=& \gamma_{x*y} \Big(f \big(\kappa(x)\kappa(y)^{-1}\big)\kappa(y)~^{\kappa(y)^{-1}f(\kappa(y))}{f(st^{-1})}~t\Big)\\
&=&\gamma_{x*y} \Big(\big(\kappa(x)*\kappa(y)\big)~^{\kappa(y)^{-1}f(\kappa(y))}{f(st^{-1})}~t\Big)\\
&=& \gamma_{x*y} \Big( \kappa(x*y)~\mu(x, y)~^{\kappa(y)^{-1}f(\kappa(y))}{f(st^{-1})}~t\Big)\\
&=&  \mu(x, y)~^{\kappa(y)^{-1}f(\kappa(y))}{f_2(st^{-1})}~t.
\end{eqnarray*}
Taking $x=y=1$ and using the facts that $\mu(1, 1)=1=\kappa(1)$, we get $$(1, s)*(1, t)=\big(1, \alpha_{1, 1}(s, t)\big)=\big(1, f_2(st^{-1})t\big)=(1, s*t),$$ which is the desired quandle structure on $\Alex_{f_2}(A)$.
\end{proof}

Proposition \ref{Alex-prop} recovers the following special case of \cite[Proposition 3.7]{IshiharaTamaru2016}.

\begin{corollary}
If $E=G \times A$ and $f=(f_1, f_2) \in \Aut(G) \times \Aut(A)$, then 
$$\Alex_f(E) \cong \Alex_{f_1}(G) \times \Alex_{f_2}(A).$$
\end{corollary}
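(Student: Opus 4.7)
The plan is to invoke Proposition \ref{Alex-prop} applied to the split extension $1 \to A \to G \times A \to G \to 1$ and show that the resulting dynamical 2-cocycle $\alpha$ degenerates to the product quandle structure. The key point is that the formula
$$\alpha_{x, y}(s, t) = \mu(x, y)~{}^{\kappa(y)^{-1}f(\kappa(y))}f_2(st^{-1})~t$$
obtained in the proof of Proposition \ref{Alex-prop} simplifies drastically in the split case.

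First I would choose the canonical section $\kappa: G \to G \times A$ defined by $\kappa(x) = (x, 1)$. Since $\kappa$ is then a group homomorphism, we have $\kappa(x)\kappa(y) = \kappa(xy)$ for all $x, y \in G$, and more generally the defining identity $\kappa(x)*\kappa(y) = \kappa(x*y)\mu(x, y)$ (with $*$ referring to the Alexander operation on $\Alex_f(E)$) forces $\mu(x, y) = 1$ for all $x, y \in G$. Next, since $f = (f_1, f_2)$ acts componentwise, one checks $f(\kappa(y)) = (f_1(y), 1) = \kappa(f_1(y))$, and so $\kappa(y)^{-1} f(\kappa(y)) = (y^{-1}f_1(y), 1)$ lies in the subgroup $G \times \{1\}$.

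Now I would use the crucial fact that $G \times \{1\}$ and $\{1\} \times A$ commute elementwise inside $G \times A$. Consequently, conjugation of the element $f_2(st^{-1}) \in \{1\} \times A$ by the element $(y^{-1}f_1(y), 1) \in G \times \{1\}$ is trivial, and the formula collapses to
$$\alpha_{x, y}(s, t) = f_2(st^{-1})~t.$$
But this is precisely the Alexander quandle operation $s * t$ in $\Alex_{f_2}(A)$, independent of $x, y \in G$. Therefore the dynamical extension $\Alex_{f_1}(G) \times_\alpha A$ has binary operation
$$(x, s)*(y, t) = \big(f_1(xy^{-1})y,~ f_2(st^{-1})t\big),$$
which coincides with the product quandle structure on $\Alex_{f_1}(G) \times \Alex_{f_2}(A)$.

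There is no real obstacle here; the whole content is the routine verification that $\mu \equiv 1$ and that the conjugation ${}^{\kappa(y)^{-1}f(\kappa(y))}(\cdot)$ acts trivially on $A$ in a direct product. The only mild subtlety is bookkeeping between the embedding $A \hookrightarrow G \times A$ as $\{1\} \times A$ and the abstract description of $\alpha$; once this identification is made, the conclusion is immediate from Proposition \ref{Alex-prop}.
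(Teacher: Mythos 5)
Your proof is correct and is essentially the argument the paper intends: the corollary is stated as an immediate specialization of Proposition \ref{Alex-prop}, obtained by taking the homomorphic section $\kappa(x)=(x,1)$, which kills $\mu$ and makes the conjugating element $\kappa(y)^{-1}f(\kappa(y))$ land in $G\times\{1\}$, so that $\alpha_{x,y}(s,t)=f_2(st^{-1})t$ is exactly the $\Alex_{f_2}(A)$ operation. Nothing further is needed.
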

\medskip

In the reverse direction, it is interesting to know whether an extension of quandles $E= X \times_\alpha S$ gives rise to an extension of adjoint groups. Let $\pi : E \to X$ be the surjective quandle homomorphism given by $\pi(x, s)=s$. Then, by functoriality of adjoint groups, there is a surjective group homomorphism $$\Adj_w(\pi): \Adj_w(E) \to \Adj_w(X)$$ given by $ \Adj_w(\pi)(e_{(x, s)})=e_x$. Note that $\langle e_{(x,s)}^{-1} e_{(x,t)}~|~x \in X,~s, t \in S \rangle \le  \Ker \big(\Adj_w(\pi)\big)$. Understanding the structure of $\Ker \big(\Adj_w(\pi)\big)$ seems difficult in general. 

\begin{example}
Let $\R_4 = \{ a_0, a_1, a_2, a_3 \}$ be the dihedral quandle of order four. If $X = \{ 0, 1 \}$ is the two element trivial quandle, then there is a surjective quandle homomorphism $\pi: \R_4 \to X$ given by $\pi(a_0)=\pi(a_2)=0$ and $\pi(a_1)=\pi(a_3)=1$. Then, by \cite[Corollary 2.5]{Andruskiewitsch2003}, $$\R_4 \cong X \times_\alpha S,$$ where $\alpha$ is a dynamical 2-cocycle  and $S$ is a set of two elements. Since both $X$ and $S$ are two element trivial quandles, $\Adj(X)=\Adj(S)= \mathbb{Z}^2$, the free abelian group of rank two.  We claim that $\Adj(\R_4)$ is not an extension of $\mathbb{Z}^2$ by $\mathbb{Z}^2$.
\par
We write $\R_4$ as a disjoint union of two fibers
$$
\R_4 = Y_0 \sqcup Y_1,~\mbox{where}~Y_0 = \{ a_0, a_2 \},~~~Y_1 = \{ a_1, a_3 \}.
$$
In fact, the fibers $Y_1, Y_2$ are the connected components and trivial subquandles of $\R_4$. We see that $S_{a_0} = S_{a_2}$ with
$$
S_{a_0}(a_1) = a_3,~~S_{a_0}(a_3) = a_1.
$$
Similarly, $S_{a_1} = S_{a_3}$ with
$$
S_{a_1}(a_0) = a_2,~~S_{a_1}(a_2) = a_0.
$$
Since  $Y_0,Y_1$ and $X$ are trivial quandles, we have
$$
\Adj(Y_0) = \big\langle e_0, e_2~|~e_0 e_2 = e_2 e_0 \big\rangle,~~\Adj(Y_1) = \big\langle e_1, e_3~|~e_1 e_3 = e_3 e_3 \big\rangle
$$
and $$\Adj(X) = \big\langle f_0, f_1~|~f_0 f_1 = f_1 f_0 \big\rangle.$$
We see that the group $\Adj(\R_4)$ is generated by its subgroups $\Adj(Y_0), \Adj(Y_1)$ and has relations
$$
e_1^{-1} e_0 e_1 = e_2,~~~e_1^{-1} e_2 e_1 = e_0,~~~ e_0^{-1} e_1 e_0 = e_3,~~~e_2^{-1} e_1 e_2 = e_3,~~~ e_3^{-1} e_0 e_3 = e_2,~~~e_3^{-1} e_2 e_3 = e_0.
$$

The elements $b_0:=e_0 e_2^{-1}$ and  $b_1:=e_1 e_3^{-1}$ lie in the kernel of the homomorphism
$$
\Adj(\pi) : \Adj(\R_4) \to \Adj(X).
$$
We remove the generators $e_2$ and $e_3$ using the equalities $e_2 = e_0 b_0^{-1}$ and $e_3 = e_1 b_1^{-1}$. Then the other relations have the form
$$
b_0 = e_1^{-1} e_0^{-1} e_1 e_0,~~b_1 = e_0^{-1} e_1^{-1} e_0e_1,~~b_1 = b_0^{-1},~~b_0^2 = 1,~~e_0^{-1}b_0e_0 = b_0,~~e_1^{-1}b_1e_1 = b_1.
$$
Thus, $\Adj(\R_4)$ has a presentation
$$
\Adj(\R_4) = \big\langle e_0, e_1, b_0~|~b_0^2 = 1,~~e_1 e_0 = e_0 e_1 b_0,~~e_1^{-1} e_0 = e_0 e_1^{-1} b_0,~~ [e_0, b_0] =   [e_1, b_0] = 1 \big\rangle
$$
and there is an exact sequence
$$
1 \to \mathbb{Z}_2 \to \Adj(\R_4) \to  \mathbb{Z}^2 \to 1,
$$
where $\mathbb{Z}_2 = \langle b_0 \rangle$ and $\mathbb{Z}^2= \langle f_0, f_1 \rangle$.\end{example}
\medskip

\section{Homomorphism from group cohomology to quandle cohomology}\label{Homomorphism group cohomology to quandle cohomology}

We briefly recall the idea of a quandle module due to \cite{Andruskiewitsch2003, Jackson2005}. A {\it trunk} $\T(X)$ of a quandle $X$ is an object analogous to a category which has one object for each $x \in X$ and for each ordered pair $(x,y)\in X \times X$  there are morphisms $\textbf{a}_{x,y}: x \to x*y$ and $\textbf{b}_{y,x}: y \to x*y$ such that the diagrams
$$
\xymatrix{
x \ar[r]^{\textbf{a}_{x,y}}  \ar[d]_{\textbf{a}_{x,z}} & x*y \ar[d]^{\textbf{a}_{x*y,z}} \\
x*z \ar[r]^{\textbf{a}_{x*z,y*z}\hspace*{20mm} } & (x*y)*z=(x*z)*(y*z)}
$$
and 
$$
\xymatrix{
y \ar[r]^{\textbf{b}_{y,x}}  \ar[d]_{\textbf{a}_{y,z}} & x*y \ar[d]^{\textbf{a}_{x*y,z}} \\
y*z \ar[r]^{\textbf{b}_{y*z, x*z}\hspace*{20mm} } & (x*y)*z=(x*z)*(y*z)}
$$
commute  for all $x,y,z \in X$. If $\mathcal{A}$ is the category of abelian groups, then  a {\it trunk map} is a functor $\T(X) \to \mathcal{A}$. Thus, a trunk map assigns an abelian groups $A_x$ to each $x \in X$ and group homomorphisms
$\bar{\textbf{a}}_{x,y}:A_x \to A_{x*y}$ and $\bar{\textbf{b}}_{y,x}: A_y \to A_{x*y}$
such that
\begin{equation}\label{homo-quandle-module-1}
\bar{\textbf{a}}_{x*y,z} ~\bar{\textbf{a}}_{x,y} =  \bar{\textbf{a}}_{x*z,y*z} ~\bar{\textbf{a}}_{x,z}
\end{equation}
and
\begin{equation}\label{homo-quandle-module-2}
\bar{\textbf{a}}_{x*y,z}~ \bar{\textbf{b}}_{y,x}  =  \bar{\textbf{b}}_{y*z,x*z} ~\bar{\textbf{a}}_{y,z}
\end{equation}
for all $x,y,z \in X$. A {\it quandle module} over $X$ is a trunk map
$\T(X) \to \mathcal{A}$ such that each
$\bar{\textbf{a}}_{x,y}$ is an isomorphism and the identities
\begin{equation}\label{homo-quandle-module-3}
\bar{\textbf{b}}_{z,x*y}(s) =  \big(\bar{\textbf{a}}_{x*z,y*z}\bar{\textbf{b}}_{z,x}(s)\big)~ \big(\bar{\textbf{b}}_{y*z,x*z}\bar{\textbf{b}}_{z,y}(s)\big)
\end{equation}
and 
\begin{equation}\label{homo-quandle-module-4}
\bar{\textbf{a}}_{z, z}(s)~\bar{\textbf{b}}_{z, z}(s)=s
\end{equation}
hold for all $x,y,z \in X$ and $s \in A_z$. A quandle module over $X$ is called {\it homogeneous} if $A_x=A$ for all $x \in X$. A homogeneous quandle module is called {\it trivial} if $\bar{\textbf{a}}_{x,y}$ is the identity map and $\bar{\textbf{b}}_{x,y}$ is the trivial map for all $x, y \in X$.
\par

\begin{example}\label{homogen-module-example}
Any abelian group $A$ can be turned into a homogeneous quandle module over a quandle $X$ by taking $\bar{\textbf{a}}_{x,y}(s)=s^{-1}$ and $\bar{\textbf{b}}_{x,y}(s)=s^2$ for all $x, y \in X$ and $s \in A$.
\end{example}

From now onwards, we would consider only homogeneous quandle modules over a quandle. If $A$ is a homogeneous quandle module over $X$, then a {\it factor set} is a map $\mu: X \times X \to A$ satisfying
\begin{equation}\label{factor-set-def-condition}
\mu(x*y,z) ~ \bar{\textbf{a}}_{x*y,z}\big(\mu(x,y)\big) = \bar{\textbf{a}}_{x*z,y*z}\big(\mu(x,z)\big)  ~\mu(x*z,y*z)~ \bar{\textbf{b}}_{y*z,x*z}\big(\mu(y,z)\big)
\end{equation}
and 
\begin{equation}\label{factor-set-def-condition-2}
\mu(x,x) = 1
\end{equation}
for all $x,y,z \in X$. Identities \eqref{homo-quandle-module-1}-\eqref{homo-quandle-module-4} imply that each factor set $\mu$ defines a quandle extension $E=X \times_\alpha A$ of $X$ by $A$ by
\begin{equation}\label{factor-set-quandle-operation}
(x, s)* (y, t)= \big(x*y, ~\bar{\textbf{a}}_{x,y}(s)~\mu(x, y)~\bar{\textbf{b}}_{y,x}(t) \big)
\end{equation}
for all $x, y \in X$ and $s, t \in A$. Conversely, by \cite[Proposition 3.1]{Jackson2005}, every extension of a quandle $X$ by a homogeneous quandle module $A$ over $X$ is determined by a factor set. Further, two factor sets $\mu_1,\mu_2: X \times X \to A$ are {\it cohomologous} if there exists a map $\lambda:X \to A$ such that
\begin{equation}\label{factor-set-equivalence}
\mu_2(x, y) = \mu_1(x, y)~\bar{\textbf{a}}_{x,y} \big(\lambda(x) \big)~\bar{\textbf{b}}_{y,x} \big(\lambda(y) \big)~\lambda(x*y)^{-1}
\end{equation}
 for all $x,y \in X$. By \cite[Theorem 3.6]{Jackson2005}, two factor sets are cohomologous if and only if the corresponding extensions are equivalent. Thus, the set $\mathcal{H}^2(X;A)$ of cohomology classes of factor sets, which forms an abelian group with point-wise multiplication,  is in bijective correspondence with the set of equivalence classes of quandle extensions of $X$ by $A$. When $A$ is a trivial homogeneous quandle module over $X$, then $\mathcal{H}^2(X;A)=\Ho^2(X;A)$ and the quandle operation \eqref{factor-set-quandle-operation} agrees with \eqref{abelian-extension-operation}.
\par

Next, we recall a bit of extension theory of groups that we need for our last two results. Let $A$ be an abelian group and $G$ a group acting on $A$ from left by automorphisms. The second cohomology of $G$ with coefficients in $A$ is defined as $\Ho^2(G; A):=\Z^2(G; A)/\B^2(G; A)$, where
$$\Z^2(G; A) =\big\{\nu:G \times G \to A~|~ ^{x}\nu(y,z)\nu(x, yz)= \nu(xy, z)\nu(x, y)~ \textrm{for all}\ x, y, z\in G\big\}$$
is the group of  {\it 2-cocycles} and
$$\B^2(G;A) = \big\{\nu:G \times G \to A~|~\textrm{there exists}~\lambda:G \to A~\textrm{with}~ \nu(x, y)=~^{x}\lambda(y) \lambda(xy)^{-1} \lambda(x)~\textrm{for all}~x, y\in G \big\}$$
is the group of {\it 2-coboundaries}.
\medskip

It is well-known that $\Ho^2(G;A)$ classifies extensions of groups $1 \to A \to E \to G \to 1$ that give rise to the given action of $G$ on $A$. In particular, $E$ is abelian if and only if any group 2-cocycle $\nu$ classifying the extension is symmetric, that is, 
$$\nu(x, y)=\nu(y, x)$$ for all $x, y \in G$. Since any group 2-cocycle cohomologous to a symmetric group 2-cocycle is itself symmetric, we can consider the set $\Ho^2(G;A)_{sym}$ of all symmetric group cohomology classes of degree 2. In fact, it turns out that $\Ho^2(G;A)_{sym}$ is a subgroup of $\Ho^2(G;A)$.

\begin{theorem}\label{homo-group-coho-quandle-coho}
Let $G$ and $A$ be abelian groups with $A$ viewed as a trivial $G$-module. Then there is a natural group homomorphism $\Lambda:\Ho^2(G;A)_{sym} \to \mathcal{H}^2 \big(\Core(G);A\big )$ given by $\Lambda \big([\nu] \big) = [\check{\nu}]$.
\end{theorem}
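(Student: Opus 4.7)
The plan is to realise $\Lambda$ via the core functor applied to central extensions. Given $[\nu] \in \Ho^2(G;A)_{sym}$, let $1 \to A \to E \to G \to 1$ be the corresponding extension with a normalised section $\kappa:G \to E$ satisfying $\kappa(x)\kappa(y)=\kappa(xy)\,\nu(x,y)$. Since $\nu$ is symmetric and $A$ is a trivial $G$-module, the group $E$ is abelian. Applying the functor $\Core$ of Proposition \ref{verbal-adjoint} and invoking Case~1 of Proposition \ref{core-conj-prop} yields $\Core(E) \cong \Core(G) \times_\alpha \Core(A)$, where, under the triviality of the $G$-action, the dynamical cocycle collapses to
\[
\alpha_{x,y}(s,t) \;=\; \mu(x,y)\,t^{2}s^{-1}.
\]
This is precisely the abelian quandle extension operation \eqref{factor-set-quandle-operation} associated to the homogeneous $\Core(G)$-module structure on $A$ of Example \ref{homogen-module-example} (so $\bar{\textbf{a}}_{x,y}(s)=s^{-1}$, $\bar{\textbf{b}}_{y,x}(t)=t^{2}$) with factor set $\mu$. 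Unwinding $\kappa(y)\kappa(x)^{-1}\kappa(y)=\kappa(yx^{-1}y)\,\mu(x,y)$ using the identity $\kappa(a)\kappa(b)=\kappa(ab)\nu(a,b)$ yields the explicit formula
\[
\check{\nu}(x,y) \;:=\; \mu(x,y) \;=\; \nu(yx^{-1},y)\,\nu(y,x^{-1})\,\nu(x,x^{-1})^{-1}.
\]

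The factor set identities \eqref{factor-set-def-condition} and \eqref{factor-set-def-condition-2} for $\check{\nu}$ then need no separate verification: because $\Core(E)$ is a bona fide quandle, $\alpha$ automatically satisfies \eqref{dynamical-cocycle-condition3}, which translates back through \eqref{factor-set-quandle-operation} into the factor set identity for $\mu$; the normalisation $\check{\nu}(x,x)=1$ reduces via symmetry of $\nu$ to the standard normalisations $\nu(1,\cdot)=\nu(\cdot,1)=1$.

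For well-definedness I would argue that cohomologous symmetric group $2$-cocycles $\nu,\nu'$ classify equivalent central extensions $E \cong E'$ under an isomorphism fixing $A$ pointwise and inducing the identity on $G$; functoriality of $\Core$ promotes this to an equivalence of the associated quandle extensions of $\Core(G)$ by $A$, and \cite[Theorem 3.6]{Jackson2005} then gives $[\check{\nu}] = [\check{\nu'}]$ in $\mathcal{H}^2(\Core(G);A)$. Concretely, if $\nu'(x,y)=\nu(x,y)\,\lambda(xy)\lambda(x)^{-1}\lambda(y)^{-1}$ for a normalised $\lambda:G \to A$, a short computation with the three-term formula shows that $\lambda'(x):=\lambda(x)^{-1}$ is the $1$-cochain realising \eqref{factor-set-equivalence}. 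The homomorphism property $\check{\nu_{1}\nu_{2}}=\check{\nu_{1}}\,\check{\nu_{2}}$ is then immediate from the explicit formula, each of its three factors being multiplicative in $\nu$, and naturality in $G$ and $A$ reduces to functoriality of $\Core$ together with naturality of cocycle evaluation.

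The main obstacle is reconciling the two descriptions of $\Lambda$. The functorial construction via $\Core$ delivers a factor set only up to cohomology, depending on the chosen transversal $\kappa$, whereas the explicit formula records a specific cocycle representative. What has to be verified carefully is that altering $\kappa$ shifts $\check{\nu}$ by a factor set coboundary with respect to the twisted module structure $(\bar{\textbf{a}},\bar{\textbf{b}})=(s^{-1},t^{2})$ rather than the trivial one, so that the target of $\Lambda$ really is $\mathcal{H}^{2}(\Core(G);A)$ with this particular homogeneous quandle module and the homomorphism property survives the passage to cohomology classes.
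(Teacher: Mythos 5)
Your proposal is correct and follows essentially the same route as the paper: realise $\Lambda$ by applying the core functor to the abelian extension via Proposition \ref{core-conj-prop}, read off the factor set for the homogeneous module of Example \ref{homogen-module-example}, and check the coboundary, multiplicativity and naturality statements by direct computation; your explicit formula $\check{\nu}(x,y)=\nu(yx^{-1},y)\,\nu(y,x^{-1})\,\nu(x,x^{-1})^{-1}$ agrees with the paper's $\nu(yx^{-1},x)^{-1}\nu(yx^{-1},y)$ by the group cocycle identity, normalisation and symmetry. The transversal-dependence you flag at the end is not an outstanding gap: changing $\kappa$ alters $\nu$ by a group $2$-coboundary, and your own computation (matching the paper's) shows this shifts $\check{\nu}$ by a factor-set coboundary in the sense of \eqref{factor-set-equivalence}.
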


\begin{proof}
Let $1 \to A \to E \to G \to 1$ be an extension of abelian groups and $\kappa:G \to E$ a transversal such that $\kappa(1)=1$. Let  $\nu: G \times G \to A$ be a symmetric group 2-cocycle corresponding to the  transversal $\kappa$, that is, 
\begin{equation}\label{tau-nu}
\kappa(x)\kappa(y)= \kappa(xy)~\nu(x, y)
\end{equation}
for all $x, y \in G$. Define $\check{\nu}: G \times G \to A$ by 
$$\kappa(x)*\kappa(y)=\kappa(x*y)~\check{\nu}(x, y)$$
for all $x, y \in G$. Then, we have
\begin{eqnarray}\label{mu-nu-relation}
\nonumber \check{\nu}(x, y) &=& \big( \kappa(x)*\kappa(y) \big)\kappa(x*y)^{-1}\\
\nonumber &=& \kappa(y)\kappa(x)^{-1} \kappa(y)\kappa(yx^{-1}y)^{-1}\\
\nonumber  &=& \kappa(y)\kappa(x)^{-1} \kappa(yx^{-1})^{-1} ~\nu(yx^{-1}, y),~\textrm{using \eqref{tau-nu} and the fact that $E$ is abelian}\\
 &=& \nu(yx^{-1}, x)^{-1}~ \nu(yx^{-1}, y).
\end{eqnarray}
As in Example \ref{homogen-module-example}, we view $A$ as a homogeneous quandle module over $\Core(G)$ by taking $\bar{\textbf{a}}_{x,y}(s)=s^{-1}$ and $\bar{\textbf{b}}_{x,y}(s)=s^2$ for all $x, y \in \Core(G)$ and $s \in A$. By Proposition \ref{core-conj-prop}, $\Core(E)$ is a quandle extension of $\Core(G)$ by the homogeneous quandle module $A$. Further, $\check{\nu}$ is the factor set of the extension $\Core(E)$ relative to the transversal $\kappa$ in the sense of \eqref{factor-set-def-condition}. 
\par

We claim that if $\nu_1, \nu_2: G \times G \to A$ are two cohomologous  group  2-cocycles, then the corresponding factor sets $\check{\nu_1}, \check{\nu_2}$ are cohomologous in the sense of \eqref{factor-set-equivalence}. Let $\lambda:G \to A$ be a map such that $\nu_2(x, y)=\nu_1(x,y) \lambda(x)\lambda(xy)^{-1}\lambda(y)$ for all $x, y \in G$. Then, using \eqref{mu-nu-relation}, we obtain 
\begin{eqnarray*}
\check{\nu_2}(x, y) &=& \nu_2(yx^{-1}, x)^{-1} \nu_2(yx^{-1}, y)\\
 &=& \big(\nu_1(yx^{-1}, x)^{-1}~  \nu_1(yx^{-1},y) \big) \lambda(y)^2 \lambda(x)^{-1} \lambda(yx^{-1}y )^{-1}\\
 &=& \check{\nu_1}(x, y) \lambda(y)^2 \lambda(x)^{-1} \lambda(x*y )^{-1}\\
 &=& \check{\nu_1}(x, y) ~\bar{\textbf{b}}_{y, x}\big( \lambda(y) \big)~\bar{\textbf{a}}_{x,y} \big(\lambda(x)\big) ~\lambda(x*y )^{-1}
\end{eqnarray*}
for all $x, y \in G$. Thus, by \eqref{factor-set-equivalence}, $\check{\nu_1}$ and $\check{\nu_2}$ are cohomologous and determine equivalent extensions. This defines a map $$\Lambda:\Ho^2(G;A)_{sym} \to \mathcal{H}^2 \big(\Core(G);A\big)$$ given by $\Lambda\big([\nu] \big)= [\check{\nu}]$. If $[\nu_1], [\nu_2] \in 
\Ho^2(G;A)_{sym}$, then 
\begin{eqnarray*}
\Lambda\big([\nu_1][\nu_2] \big) &=& \Lambda\big([\nu_1\nu_2] \big) \\
&=& [\check{(\nu_1\nu_2)}]\\
&=& [\check{\nu_1}\check{\nu_2}]\\
&=& [\check{\nu_1}][\check{\nu_2}]\\
&=& \Lambda\big([\nu_1]\big) \Lambda\big( [\nu_2] \big),
\end{eqnarray*}
and hence $\Lambda$ is a group homomorphism. 
\par

For naturality of  $\Lambda$, let $G_1, G_2, A_1, A_2$ be abelian groups and $f:G_2 \to G_1$, $h:A_1 \to A_2$ be group homomorphisms. Then for each group 2-cocycle $\nu:G_1 \times G_1 \to A_1$, we define
$$\nu'(x, y)=h \big(\nu\big(f(x), f(y)\big) \big)$$ for all $x, y \in G_2$. A routine check shows that $\nu \mapsto \nu'$ induces a group homomorphism $\Ho^2(G_1;A_1)_{sym} \to \Ho^2(G_2;A_2)_{sym}$ given by $[\nu] \mapsto [\nu']$. Similarly, for each factor set $\mu: \Core(G_1) \times \Core(G_1) \to A_1$, define 
$$\mu'(x, y)=h \big(\mu\big(f(x), f(y)\big) \big)$$ for all $x, y \in \Core(G_2)$. A direct check shows that $\mu'$ satisfies the factor set conditions \eqref{factor-set-def-condition} and \eqref{factor-set-def-condition-2}. Since cohomologous factor sets are mapped to cohomologous factor sets, we obtain a group homomorphism  $\mathcal{H}^2 \big(\Core(G_1); A_1\big) \to \mathcal{H}^2 \big(\Core(G_2); A_2\big)$ given by $[\mu] \mapsto [\mu']$. Finally, it follows from the construction of the preceding maps that the diagram 
$$
\xymatrix{
\Ho^2(G_1;A_1)_{sym} \ar[r]^{[\nu] \mapsto [\nu']} \ar[d]^{\Lambda} & \Ho^2(G_2;A_2)_{sym} \ar[d]^{\Lambda} \\
\mathcal{H}^2 \big(\Core(G_1); A_1\big) \ar[r]^{[\mu] \mapsto [\mu']}  & \mathcal{H}^2 \big(\Core(G_2); A_2\big)}
$$
commutes, and the map $\Lambda$ is natural.
\end{proof}

Let $G$ be a group and $A$ a trivial $G$-module. For a group 2-cocycle $\nu \in \Z^2(G;A)$, define $\breve{\nu}:G \times G \to A$  by 
\begin{equation}\label{nu-nu-breve-relation}
\breve{\nu}(x, y)=\nu(x, y)~ \nu(y, y^{-1}xy)^{-1}
\end{equation}
 for $x, y \in G$. It is shown in \cite[Theorem 7.1]{CJKLS2003} that $\breve{\nu} \in \Z^2\big( \Conj(G);A\big)$.  In fact, the following analogue of Theorem \ref{homo-group-coho-quandle-coho} holds.

\begin{theorem}\label{group-homo-grouo-coho-conj}
Let $G$ be a group and $A$ an abelian group viewed as a trivial $G$-module. Then there is a natural group homomorphism $\Gamma:\Ho^2(G;A) \to \Ho^2\big(\Conj(G),A\big)$ given by $\Gamma \big([\nu] \big) = [\breve{\nu}]$.
\end{theorem}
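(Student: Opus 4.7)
The plan is to mirror the proof of Theorem \ref{homo-group-coho-quandle-coho} closely, adapting the computations to the conjugation quandle setting. The four things to verify are: (i) the assignment $\nu \mapsto \breve{\nu}$ descends to cohomology classes, (ii) it is a group homomorphism, (iii) it is natural in $G$ and $A$, and (iv) the fact that $\breve{\nu}$ is already a quandle 2-cocycle (which is the input we take from \cite[Theorem 7.1]{CJKLS2003}). Note that since $A$ is a trivial $G$-module, the group 2-cocycle condition becomes $\nu(y,z)\nu(x,yz) = \nu(xy,z)\nu(x,y)$ and a 2-coboundary has the form $\nu(x,y) = \lambda(y)\lambda(xy)^{-1}\lambda(x)$.

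The heart of the proof is step (i). Suppose $\nu_2 = \nu_1 \cdot \partial\lambda$ for some $\lambda:G \to A$, that is,
\[
\nu_2(x,y) = \nu_1(x,y)~\lambda(y)\lambda(xy)^{-1}\lambda(x).
\]
Using the defining formula \eqref{nu-nu-breve-relation}, I will substitute $(y,y^{-1}xy)$ in place of $(x,y)$ to get
\[
\nu_2(y,y^{-1}xy) = \nu_1(y,y^{-1}xy)~\lambda(y^{-1}xy)\lambda(xy)^{-1}\lambda(y),
\]
and then multiply out the expression $\breve{\nu_2}(x,y) = \nu_2(x,y)\nu_2(y,y^{-1}xy)^{-1}$. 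Since $A$ is abelian, the telescoping yields
\[
\breve{\nu_2}(x,y) = \breve{\nu_1}(x,y)~\lambda(x)\lambda(y^{-1}xy)^{-1} = \breve{\nu_1}(x,y)~\lambda(x)\lambda(x*y)^{-1},
\]
where $x*y = y^{-1}xy$ is the conjugation quandle operation. The right-hand factor is precisely the quandle 2-coboundary $\delta^{1}\lambda$ (up to the sign convention of Section \ref{abelian-quandle-cohomology-section}), so $[\breve{\nu_1}]=[\breve{\nu_2}]$ in $\Ho^2\big(\Conj(G);A\big)$. Hence $\Gamma\big([\nu]\big) := [\breve{\nu}]$ is well-defined.

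For step (ii), since $A$ is abelian I compute directly from \eqref{nu-nu-breve-relation}:
\[
\breve{(\nu_1\nu_2)}(x,y) = (\nu_1\nu_2)(x,y)~(\nu_1\nu_2)(y,y^{-1}xy)^{-1} = \breve{\nu_1}(x,y)~\breve{\nu_2}(x,y),
\]
giving $\Gamma([\nu_1][\nu_2]) = \Gamma([\nu_1])\Gamma([\nu_2])$. For step (iii), given group homomorphisms $f:G_2 \to G_1$ and $h:A_1 \to A_2$ (with both $A_i$ trivial modules), define $\nu'(x,y) := h\big(\nu(f(x),f(y))\big)$ on the group side and $\alpha'(x,y) := h\big(\alpha(f(x),f(y))\big)$ on the quandle side. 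Standard checks show these are well-defined on cohomology and, because $f$ is a group homomorphism, $f(y^{-1}xy) = f(y)^{-1}f(x)f(y) = f(x)*f(y)$, so $(\breve{\nu})' = \breve{(\nu')}$. This yields commutativity of the naturality square
\[
\xymatrix{
\Ho^2(G_1;A_1) \ar[r] \ar[d]_{\Gamma} & \Ho^2(G_2;A_2) \ar[d]^{\Gamma} \\
\Ho^2\big(\Conj(G_1);A_1\big) \ar[r] & \Ho^2\big(\Conj(G_2);A_2\big).
}
\]

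The main obstacle is essentially bookkeeping in step (i): one must carry out the substitution into \eqref{nu-nu-breve-relation} cleanly and recognize the resulting correction term as a quandle 1-coboundary of $\lambda$, rather than something more complicated. Once abelianness of $A$ is used to collapse the telescoping product, the rest is formal. Unlike Theorem \ref{homo-group-coho-quandle-coho} there is no need to impose a symmetry restriction on the cocycles, since the map $\breve{\nu}$ arises from the conjugation rather than core operation, and $\breve{\nu}$ lives in the ordinary quandle cohomology $\Ho^2\big(\Conj(G);A\big)$ (which matches $\mathcal{H}^2\big(\Conj(G);A\big)$ with $A$ viewed as a trivial homogeneous quandle module).
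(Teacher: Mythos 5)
Your proposal is correct and follows essentially the same route as the paper: the same telescoping computation showing $\breve{\nu_2}(x,y)=\breve{\nu_1}(x,y)\,\lambda(x)\lambda(x*y)^{-1}$ establishes well-definedness, the homomorphism property is the same pointwise product argument, naturality is checked exactly as for $\Lambda$ in Theorem \ref{homo-group-coho-quandle-coho}, and the cocycle property of $\breve{\nu}$ is taken from \cite[Theorem 7.1]{CJKLS2003} just as in the paper. No gaps.
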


\begin{proof}
Let $\nu_1, \nu_2$ be two cohomologous  group  2-cocycles. Then there exists a map $\lambda:G \to A$ such that $\nu_2(x, y)=\nu_1(x,y) \lambda(x)\lambda(xy)^{-1}\lambda(y)$ for all $x, y \in G$. Using \eqref{nu-nu-breve-relation}, we obtain 
\begin{eqnarray*}
\breve{\nu_2}(x, y) &=& \nu_2(x, y) ~\nu_2(y, y^{-1}xy)^{-1}\\
 &=& \big(\nu_1(x,y) ~\nu_1(y, y^{-1}xy)^{-1} \big)\lambda(x)\lambda(y^{-1}xy)^{-1}\\
 &=& \breve{\nu_1}(x, y)\lambda(x)\lambda(x*y)^{-1}.
\end{eqnarray*}
for all $x, y \in G$. Thus, $\breve{\nu_1}$ and $\breve{\nu_2}$ are cohomologous quandle 2-cocycles. This defines a map $$\Gamma:\Ho^2(G;A) \to \Ho^2 \big(\Conj(G),A\big)$$ given by $\Gamma\big([\nu] \big)= [\breve{\nu}]$. If $[\nu_1], [\nu_2] \in 
\Ho^2(G;A)$, then 
\begin{eqnarray*}
\Gamma\big([\nu_1][\nu_2] \big) &=& \Gamma\big([\nu_1\nu_2] \big) \\
&=& [\breve{(\nu_1\nu_2)}]\\
&=& [\breve{\nu_1}\breve{\nu_2}]\\
&=& [\breve{\nu_1}][\breve{\nu_2}]\\
&=& \Gamma\big([\nu_1]\big) \Gamma\big( [\nu_2] \big),
\end{eqnarray*}
and $\Gamma$ is a group homomorphism. Naturality of $\Gamma$ follows similar to that of $\Lambda$ in Theorem \ref{homo-group-coho-quandle-coho}.
\end{proof}

We conclude with the following  natural problem. 

\begin{problem}
Determine  the kernel and the image of the homomorphisms $\Lambda$ and $\Gamma$.
\end{problem}
\medskip

\noindent\textbf{Acknowledgments.}
Bardakov is supported by the Russian Science Foundation (project 19-41-02005) for work in sections 3 and 4 and by the Ministry of Science and Higher Education of Russia (agreement No. 075-02-2020-1479/1) for work in sections 8 and 9. The work of Singh is supported by the Swarna Jayanti Fellowship grants DST/SJF/MSA-02/2018-19 and SB/SJF/2019-20/04.
\medskip

\end{document}